\documentclass[11pt,a4paper]{amsart}
\usepackage{amsmath,amssymb,latexsym,amsthm,enumerate}
\usepackage{hyperref} 

\theoremstyle{plain}
\newtheorem{theorem}{Theorem}[section]
\newtheorem{proposition}[theorem]{Proposition}
\newtheorem{lemma}[theorem]{Lemma}
\newtheorem{corollary}[theorem]{Corollary}
\theoremstyle{definition}
\newtheorem{definition}[theorem]{Definition}
\newtheorem{example}[theorem]{Example}
\newtheorem{remark}[theorem]{Remark}
\theoremstyle{remark}

\numberwithin{equation}{section}

\newcounter{numpar}[section]
\renewcommand*{\thenumpar}{\thesection.\arabic{numpar}}
\newcommand*{\newpar}[1]{\refstepcounter{numpar}\medskip\textbf{\thenumpar.#1}}

\newcounter{numitem}[section]
\newcommand*{\newitem}{\refstepcounter{numitem}\medskip\textbf{\thenumitem. }}

\newcommand*{\wt}{\widetilde}
\newcommand*{\ol}{\overline}

\newcommand*{\pn}[1]{\ensuremath{#1}\mbox{-a.s.}}

\newcommand*{\dd}{d}     
\newcommand*{\eps}{\varepsilon}

\newcommand*{\cB}{\mathcal B}

\newcommand*{\cF}{\mathcal F}

\newcommand*{\bbN}{\mathbb N}

\newcommand*{\bbR}{\mathbb R}

\newcommand*{\EE}{\mathsf E}
\newcommand*{\PP}{\mathsf P}

\newcommand*{\la}{\langle}
\newcommand*{\ra}{\rangle}
\newcommand*{\loc}{{\mathrm{loc}}}
\DeclareMathOperator{\const}{const}
\DeclareMathOperator{\Law}{Law}
\DeclareMathOperator{\sgn}{sgn}
\DeclareMathOperator{\Var}{Var} 

\addtolength{\hoffset}{-17mm}
\textwidth=160mm
\topmargin=-2mm
\addtolength{\textheight}{15mm}
\linespread{1.3}
\sloppy

\begin{document}
\title[Loss of the semimartingale property]{On the loss of the semimartingale property\\at the hitting time of a level}

\author{Aleksandar Mijatovi\'{c}}
\address{Department of Mathematics, Imperial College London, London, UK}
\email{a.mijatovic@imperial.ac.uk}

\author{Mikhail Urusov}
\address{Faculty of Mathematics, University of Duisburg-Essen, Essen, Germany}
\email{mikhail.urusov@uni-due.de}

\thanks{We are grateful to Francis Hirsch for a helpful discussion.
We thank Nicholas Bingham and the anonymous referee for the comments that helped improve the paper.}

\keywords{Continuous semimartingale;
one-dimensional diffusion;
local time;
additive functional;
Ray-Knight theorem}

\subjclass[2010]{60H10; 60J60; 60J55}

\begin{abstract}
This paper studies 
the loss of the semimartingale property of the process 
$g(Y)$
at the time a one-dimensional diffusion
$Y$ 
hits a level,
where
$g$
is a difference of two convex functions. 
We show that the process 
$g(Y)$
can fail to be a semimartingale 
in two ways only, which leads to a natural
definition of non-semimartingales 
of the \textit{first} and \textit{second kind}.
We give a deterministic if and only if condition
(in terms of 
$g$
and the coefficients of $Y$) for 
$g(Y)$
to fall into 
one of the two classes of processes, which yields 
a characterisation for the loss of the semimartingale 
property. 
A number of applications of the results 
in the theory of stochastic processes and
real analysis are given: e.g. we construct 
an adapted diffusion 
$Y$
on
$[0,\infty)$
and a \emph{predictable} finite stopping time $\zeta$,
such that $Y$ is 
a local semimartingale on the stochastic interval $[0,\zeta)$,
continuous at $\zeta$ and constant after $\zeta$,
but is \emph{not} a semimartingale on $[0,\infty)$.
\end{abstract}

\maketitle


\section{Introduction}
\label{sec:sem_mot}
Continuous semimartingales form an important, 
general and well-studied class of stochastic processes. 
This paper deals with the phenomenon of the loss
of the semimartingale property
at the hitting time of a level
as motivated and explained below.

\newpar{}
The motivation for this work is best described by the 
following two examples.

\begin{example}
\label{ex:1}
Let $B$ be an $(\cF_t,\PP)$-Brownian motion starting from $x_0>0$ defined on some
filtered probability space $(\Omega,\cF,(\cF_t)_{t\in[0,\infty)},\PP)$.
It is well-known that 
the process 
$\sqrt{|B|}$ 
is not a semimartingale
(see the original reference~\cite{Yor:78}
or the monograph~\cite[Th.~72]{Protter:05}).
A possible short argument is as follows.
Let $X$ be a continuous semimartingale
and $L_{t}^{a}(X)$ its local time at time $t\ge0$
and level $a\in\bbR$.
Recall that if $f$ is a strictly increasing function on $\bbR$,
which moreover is the difference of two convex functions,
then, for any $a\in\bbR$, it holds
$L_{.}^{f(a)}(f(X))=f'_{+}(a)L_{.}^{a}(X)$~a.s.,
where $f'_{+}(a)$ is the right derivative of $f$
at the point $a$
(see \cite[Ch.~VI, Ex.~1.23]{RevuzYor:99}).
If $X:=\sqrt{|B|}$ were a semimartingale,
then, applying the statement above to $f(x)=x^{2}\sgn x$,
we would get that $L_{.}^{0}(|B|)\equiv0$,
which would contradict the well-known fact
that the local time at zero of $|B|$
increases immediately after the time
$\tau^{B}_{0}=\inf\{t\ge0:B_{t}=0\}$.

Intuitively this can be summarized as follows:
the semimartingale property of $\sqrt{|B|}$ fails
\emph{immediately after} $\tau^B_0$
because the increase in local time at zero of $|B|$
and the infinite slope of the function
$x\mapsto\sqrt{x}$
at the origin make the process $\sqrt{|B|}$
accumulate an infinite amount of local time at zero
immediately after~$\tau^B_0$.
\end{example}

It is now natural to ask whether the square root
of a nonnegative continuous semimartingale
that does not accrue local time at zero
may fail to be a semimartingale (for a different reason).
This is also possible as the following example shows.

\begin{example}
\label{ex:2}
Let $x_{0}>0$.
Consider a squared Bessel process
$Y$
of dimension 
$\delta\in(0,1)$
starting from~$x_{0}^{2}$,
i.e. it holds
$\dd Y_{t}=\delta\,\dd t+2\sqrt{Y_{t}}\,\dd W_{t}$,
where $W$ is a Brownian motion.
It is well-known that $Y$ is a nonnegative semimartingale
that a.s. hits~$0$ at a finite time,
$0$~is an instantaneously reflecting boundary point for~$Y$,
and $Y$ does not accrue local time at~$0$.
Let $\rho=(\rho_t)_{t\in[0,\infty)}$
be given by $\rho_t=\sqrt{Y_t}$,
i.e. $\rho$ is a Bessel process of dimension
$\delta\in(0,1)$ starting from $x_0>0$.
It is known that $\rho$ is not a semimartingale.
For completeness we present a formal proof of this fact
in Appendix~\ref{sec:BesNSem}.
Here again the semimartingale property of $\rho$
fails \emph{immediately after}
$\tau^{\rho}_{0}=\inf\{t\ge0:\rho_{t}=0\}$.
\end{example}

As we already observed the loss of the semimartingale
property in both examples above occurs
\emph{immediately after} the hitting time of zero.
Let us first discuss whether this happens
in fact even \emph{at} the hitting time of zero,
i.e. whether the stopped processes
$\sqrt{B^{\tau^{B}_{0}}}$
and $\rho^{\tau^{\rho}_{0}}$
are semimartingales.
We shall see that they are semimartingales
(see Corollaries~\ref{cor:sem_dif1}
and~\ref{cor:sem_dif2}),
i.e. the loss of the semimartingale property
in both examples above does not occur
at the hitting time of zero.

The following natural question arises.

\smallskip\noindent
\textbf{Question~I.}
\emph{Let $B$ be a Brownian motion starting from $x_{0}>0$.
Does there exist a continuous strictly increasing function
$g:[0,\infty)\to\bbR$,
which is smooth on $(0,\infty)$,
such that the process $g(B^{\tau^{B}_{0}})$
is not a semimartingale?}

\smallskip\noindent
In other words we are asking here if the loss of the
semimartingale property can occur \emph{at} $\tau^{B}_{0}$.
The requirement for $g$ to be strictly increasing
stems from the desire to construct
a function ``like $\sqrt{\;\cdot\;}$''.

As we shall see, the answer to Question~I is affirmative,
and we will construct such examples below.

\newpar{}
In this paper we consider a one-dimensional diffusion $Y$
with the state space $J=(l,r)$,
$-\infty\le l<r\le\infty$,
possibly exiting its state space at a finite time.
By convention $Y$ is stopped after it reaches $l$ or~$r$.
The setting is formally described in Section~\ref{sec:set}.
Denoting by $\zeta$ the exit time from~$J$
(i.e. the hitting time of either $l$ or~$r$),
we study whether the process 
$g(Y)$
loses the semimartingale property at the time~$\zeta$.
A particular case of our discussion, when
$g$
is equal to the identity,
will answer the following question:

\smallskip\noindent
\textbf{Question~II.}
\emph{
Assuming that $Y$ exits $J$ only at finite
endpoints\footnote{Note that
if $Y$ were allowed to exit at an infinite endpoint,
then $Y$ would clearly fail to be a semimartingale.},
can $Y$ fail to be a semimartingale?}

\smallskip
As we shall see, the answer to Question~II is affirmative,
and we will construct examples below.
In particular, our construction
gives rise to a \emph{globally} defined
continuous adapted process
$Y=(Y_t)_{t\in[0,\infty)}$
and a \emph{predictable} stopping time $\zeta$
such that $Y$ is a local semimartingale
on the stochastic interval $[0,\zeta)$,
$Y$ is continuous at $\zeta$ and constant after $\zeta$,
but it is \emph{not} a semimartingale on $[0,\infty)$.
The expression ``$Y$ is a local semimartingale
on $[0,\zeta)$'' means that all stopped processes
$Y^{\tau_n}$ are semimartingales
for some (and then for any) nondecreasing
sequence of stopping times $\{\tau_n\}$
such that $\tau_n\uparrow\zeta$~a.s.
and $\tau_n<\zeta$~a.s.
Note that such a sequence exists
because $\zeta$ is predictable.
This terminology agrees
with~\cite[Def.~4.6]{Sharpe:92}.

At this juncture we refer to
\cite{Maisonneuve:77},
\cite[Sec.~V.1]{Jacod:79},
\cite{Yan:82}, \cite{Zheng:82},
\cite{Sharpe:92},
and~\cite{Sharpe:00},
where several classes of processes
on stochastic intervals
(or even on optional random sets)
are considered.
In particular,
in~\cite{Maisonneuve:77}
(also see~\cite[Ch.~IV, Ex.~1.48]{RevuzYor:99})
the notion of a continuous local martingale
on a stochastic interval $[0,\tau)$
is introduced, where $\tau$ is a stopping time
(not necessarily predictable),
and in~\cite{Yan:82} a way of extending
this notion to c\`adl\`ag processes
is suggested.
An important and delicate point in these 
works is precisely the definition of the
notion of a local martingale on the stochastic
interval 
$[0,\tau)$,
when 
$\tau$
is a non-predictable stopping time. 
From this viewpoint,
our setting, where $\zeta$
is a predictable stopping time,
is simple and unambiguous.
We stress that Question~II
appears not to have been treated 
in these papers.

Finally, we discuss
(omitting certain technical details)
the relations between our treatment
of Question~II and the work in~\cite{Sharpe:00}.
In~\cite{Sharpe:00} a process $X$
on an optional random set $\Lambda$
is considered and the question of interest is 
whether $X$ is a restriction to $\Lambda$
of a globally defined martingale
(this question arises naturally in the setting
of semimartingales on manifolds, when a
semimartingale defined on the entire manifold satisfies the
martingale property on each chart).
The analysis in~\cite{Sharpe:00} is performed under
the standing assumption
that $X$ is the restriction to $\Lambda$
of some special semimartingale.
Hence, our Question~II is precisely
the question of whether this standing assumption holds.
In this paper we give explicit deterministic
if-and-only-if conditions in the diffusion setting
for this assumption to be satisfied 
in the case the optional set is of the form $\Lambda=[0,\zeta)$.
We should, however, note that
the study in~\cite{Sharpe:00}
is particularly interesting when $\Lambda$
is non-predictable. Thus, the present paper
and~\cite{Sharpe:00}, in fact, study
distinct questions tailored
to different settings.

\newpar{}
After finishing the paper we discovered
the very deep and surprisingly general
treatment~\cite{CinlarJacodProtterSharpe:80},
where one of the questions discussed is
whether a function of a Markov process
is a semimartingale.
Theorem~4.6 in~\cite{CinlarJacodProtterSharpe:80}
gives a necessary and sufficient condition
for this
in a very general setting.
The Brownian case is discussed in detail
in Section~5 of~\cite{CinlarJacodProtterSharpe:80},
where explicit criteria are presented
for a Brownian motion (Theorems~5.5 and~5.6),
a reflecting Brownian motion (Theorem~5.8),
and a killed Brownian motion (Theorem~5.9).
At the end of Section~5
of~\cite{CinlarJacodProtterSharpe:80},
it is explained how the results for
a Brownian motion can be used
to imply the corresponding results for diffusions
(via a state space transformation
and a random time-change),
but the explicit statements are not presented.

In the present paper, the setting  is far less general setting
than that of Section~4 in~\cite{CinlarJacodProtterSharpe:80}.
As discussed above, we are interested only in
the loss of the semimartingale property
\emph{at} the exit time~$\zeta$.
This allows us to assume from the outset that 
\begin{equation*}
g\colon J\to\bbR
\text{ is a difference of two convex functions,}
\end{equation*}
which implies that $g(Y)$ is a continuous
semimartingale on the stochastic interval
$[0,\zeta)$, and investigate 
the behaviour of $g$ near the endpoints of $J$
that preserves the semimartingale property of
$g(Y)$
globally, i.e. on $[0,\infty)$.
Even though our setting 
is less general
than the one in~\cite{CinlarJacodProtterSharpe:80},
the results obtained in this paper are \emph{complementary}
to the results in~\cite{CinlarJacodProtterSharpe:80}.
As explained in more detail below,
we enrich the picture 
presented in~\cite{CinlarJacodProtterSharpe:80}
in several directions.

In Section~\ref{sec:sem_dif} we present
a necessary and sufficient condition
for $g(Y)$ to be a semimartingale
(Theorem~\ref{th:sem_dif1}),
a sufficient one
(Theorem~\ref{th:sem_dif2}),
a necessary one (Theorem~\ref{th:sem_dif3}),
and a discussion of the phenomena that lead
to the loss of the semimartingale property
at~$\zeta$ (Theorem~\ref{th:sem_dif4}).
It may be possible to establish our Theorem~\ref{th:sem_dif1}
from general Theorem~4.6 in~\cite{CinlarJacodProtterSharpe:80},
but this way of proving Theorem~\ref{th:sem_dif1}
does not look straightforward.
Furthermore, the authors of~\cite{CinlarJacodProtterSharpe:80}
recommend to obtain results for diffusions
from the corresponding results for Brownian motion,
i.e. from the results of Section~5
in~\cite{CinlarJacodProtterSharpe:80}.
Thus, our Theorem~\ref{th:sem_dif1}
can be deduced from Theorem~5.9
in~\cite{CinlarJacodProtterSharpe:80}
via a state space transformation
and a random time-change.
We, however, prove Theorem~\ref{th:sem_dif1}
directly.
This requires an investigation of the 
convergence of certain additive functionals
of diffusion processes, which is carried out in this paper.
We hope that this classification of convergence obtained here is
of interest in its own right.

The other main results
of Section~\ref{sec:sem_dif},
Theorems~\ref{th:sem_dif2},
\ref{th:sem_dif3}, and~\ref{th:sem_dif4},
do not have their analogues
in~\cite{CinlarJacodProtterSharpe:80}
and thus do not follow from the results
of~\cite{CinlarJacodProtterSharpe:80}.
A question arises why we give a separate
sufficient condition for $g(Y)$
to be a semimartingale
(Theorem~\ref{th:sem_dif2})
and a separate necessary one
(Theorem~\ref{th:sem_dif3})
in the presence of a necessary and sufficient
condition (Theorem~\ref{th:sem_dif1}).
Even though Theorem~\ref{th:sem_dif1}
is a more precise result, it is often
less convenient in specific situations.
For example the sufficient condition
for $g(Y)$ to be a semimartingale
in Theorem~\ref{th:sem_dif2}
is typically easier to verify
than the necessary and sufficient condition
in Theorem~\ref{th:sem_dif1}
(compare~\eqref{eq:sem_dif6} and~\eqref{eq:sem_dif3}).
In specific situations we get some qualitative
information (say, about the structure
of certain examples) from Theorems~\ref{th:sem_dif2}
and~\ref{th:sem_dif3} that is not easy to obtain
from Theorem~\ref{th:sem_dif1}.
For instance,
if one wishes to construct an example
demonstrating that the answer to Question~II
is affirmative, one requires the insight from
Corollary~\ref{cor:sem_dif2}
that the drift has to oscillate around zero
near the finite endpoint, where $Y$ exits.
Corollary~\ref{cor:sem_dif2}
is an immediate consequence of Theorem~\ref{th:sem_dif2}
and does not follow from 
Theorem~\ref{th:sem_dif1}.

In Section~\ref{sec:sem_ex} we construct examples
answering Questions~I and~II.
For each question we construct two examples:
one for each of the two possible ways
(characterised in Theorem~\ref{th:sem_dif4})
the lose of the semimartingale property can occur.
In Section~\ref{sec:DiscBC} we discuss in more detail
the case where $Y$ is a Brownian motion
stopped upon hitting zero.
We start with two lemmas from real analysis
that arise in the study of the Brownian case
and are also of independent interest.
Then we present a result, Theorem~\ref{th:CharBC1},
where two different equivalent conditions
for $g(Y)$ to be a semimartingale are given.
One of them is a slight variation of the equivalent
condition of Theorem~5.9 in~\cite{CinlarJacodProtterSharpe:80}
(simply put, it is observed that parts (ii) and~(iii)
of Theorem~5.9 in~\cite{CinlarJacodProtterSharpe:80}
imply part~(i) of that theorem). The other one is new.

In Section~\ref{sec:Additive_F}
we consider the additive functional
\begin{equation}
\label{eq:i1}
\int_{J}L_{t}^{y}(Y)\,\nu(\dd y),\quad t\in[0,\zeta],
\end{equation}
where $(L_{t}^{y}(Y);\,t\in[0,\zeta),y\in J)$
is the local time of the diffusion $Y$ and $\nu$ is an arbitrary
positive measure on~$J$.
We describe the stopping time after which
this additive functional is infinite,
and present deterministic
criteria for the convergence and divergence
of~\eqref{eq:i1} at this stopping time.
As a particular case of this investigation,
Lemma~5.10 in~\cite{CinlarJacodProtterSharpe:80}
is generalised to the diffusion setting
and complemented by a criterion
for a.s.-infiniteness of the additive functional.
This characterisation is the reason why 
the idea behind  the proof of the corresponding
result in Section~\ref{sec:Additive_F}
differs from the one 
in~\cite[Lemma~5.10]{CinlarJacodProtterSharpe:80}:
our treatment in Secton~\ref{sec:Additive_F}
uses the Ray-Knight theorem
in the corresponding place.
Finally, in Section~\ref{sec:sem_p}
we prove the theorems from Section~\ref{sec:sem_dif}.

\section{Setting and Notations}
\label{sec:set}
\newpar{}
\label{par:set0}
First we introduce some common notations used in the sequel.
Let us consider an open interval $J=(l,r)\subseteq\bbR$.

\noindent
\begin{itemize}
\item
$\ol J$ denotes $[l,r](\subseteq[-\infty,\infty])$.

\item
$\nu_L$ denotes the Lebesgue measure on $J$.

\item
$L^1_\loc(J)$ denotes the set of Borel functions
$J\to[-\infty,\infty]$,
which are locally integrable on~$J$,
i.e. integrable on compact subsets of~$J$
with respect to~$\nu_L$.

\item
For a positive measure $\nu$ on~$J$,
$L^1_\loc(l+,\nu)$ (resp. $L^1_\loc(r-,\nu)$)
denotes the set of Borel functions
$f\colon J\to[-\infty,\infty]$ such that for some $z\in J$,
it holds $\int_{(l,z)}|f(y)|\,\nu(\dd y)<\infty$ 
(resp. $\int_{(z,r)}|f(y)|\,\nu(\dd y)<\infty$).

\item
$L^{1}_{\loc}(l+)$ and $L^{1}_{\loc}(r-)$
denote
$L^{1}_{\loc}(l+,\nu_{L})$ and $L^{1}_{\loc}(r-,\nu_{L})$ respectively.

\item
For a function $x\mapsto f(x)$ on~$J$,
the notations ``$f\in L^{1}_{\loc}(l+,\nu)$''
and ``$f(x)\in L^{1}_{\loc}(l+,\nu)$''
are synonymous.

\item
For a locally finite signed measure $\nu_S$ on~$J$,
$|\nu_S|$ denotes the variation measure of~$\nu_S$.
\end{itemize}

\newpar{}
\label{par:set1}
Let the state space be $J=(l,r)$, $-\infty\le l<r\le\infty$, and $Y=(Y_t)_{t\in[0,\infty)}$
be a $J$-valued solution of the one-dimensional SDE
\begin{equation}
\label{eq:set1}
\dd Y_t=\mu(Y_t)\,\dd t+\sigma(Y_t)\,\dd W_t,\quad Y_0=x_0,
\end{equation}
on some filtered probability space $(\Omega,\cF,(\cF_t)_{t\in[0,\infty)},\PP)$,
where $x_0\in J$ and $W$ is an $(\cF_t,\PP)$-Brownian motion.
We allow $Y$ to exit its state space $J$ at a finite time in a continuous way.
The exit time is denoted by $\zeta$.
That is to say, $\PP$-a.s. on $\{\zeta=\infty\}$ the trajectories of $Y$ do not exit~$J$,
while $\PP$-a.s. on $\{\zeta<\infty\}$ we have: either
$\lim_{t\uparrow\zeta}Y_t=r$ or $\lim_{t\uparrow\zeta}Y_t=l$.
Then we need to specify the behaviour of $Y$ after $\zeta$ on $\{\zeta<\infty\}$.
In what follows we assume that on $\{\zeta<\infty\}$ the process $Y$
stays after $\zeta$ at the endpoint of $J$ where it exits,
i.e. $l$ and $r$ are by convention absorbing boundaries.

Throughout the paper it is assumed that the coefficients 
$\mu$ and $\sigma$ in~\eqref{eq:set1} satisfy
the Engelbert-Schmidt conditions
\begin{gather}
\label{eq:set2}
\sigma(x)\ne0\;\;\forall x\in J,\\
\label{eq:set3}
\frac1{\sigma^2},\frac\mu{\sigma^2}\in L^1_\loc(J).
\end{gather}
Under \eqref{eq:set2} and~\eqref{eq:set3} SDE~\eqref{eq:set1}
has a weak solution, unique in law, which possibly exits~$J$
(see~\cite{EngelbertSchmidt:91}
or~\cite[Ch.~5, Th.~5.15]{KaratzasShreve:91}).
Conditions \eqref{eq:set2} and~\eqref{eq:set3}
are reasonable weak assumptions:
any locally bounded Borel function $\mu$
and locally bounded away from zero
Borel function $\sigma$ on $J$ 
satisfy \eqref{eq:set2} and~\eqref{eq:set3}.
In what follows we also need the scale function 
$s$ of $Y$ and its derivative~$\rho$:
\begin{align}
\label{eq:set4}
\rho(x)&=\exp\left\{-\int_c^x \frac{2\mu}{\sigma^2}(y)\,\dd y\right\},\quad x\in J,\\
\label{eq:set5}
s(x)&=\int_c^x \rho(y)\,\dd y,\quad x\in\ol J,
\end{align}
for some $c\in J$. In particular, $s$ is an increasing $C^1$-function $J\to\bbR$
with a strictly positive derivative,
which is absolutely continuous on compact intervals in~$J$,
while $s(r)$ (resp.~$s(l)$) may take value~$\infty$
(resp.~$-\infty$).

\section{Characterisation of the Semimartingale Property}
\label{sec:sem_dif}
In this section we study whether $g(Y)$
is a semimartingale for the possibly exiting
diffusion $Y$ described in the previous section
and a certain class of functions $g$ described below.
Let us consider a function $g$ on the state space 
$J$
such that 
\begin{gather}
\label{eq:sem_dif1}
g\colon J\to\bbR\text{ is a difference of two convex functions.}
\end{gather}
In particular, 
the left derivative $g'_-$
and the right derivative
$g'_+$
are well-defined everywhere on~$J$
and are functions of finite variation on
compact subsets of~$J$.
Furthermore the derivative 
$g'$
exists everywhere on 
$J$
except possibly on a countable set.
Therefore the second derivative 
$g''$ exists as a function $\nu_L$-a.e. on~$J$.
It follows from~\eqref{eq:sem_dif1}
that the second derivative of $g$
in the sense of distributions
can be identified with a locally finite signed
measure on~$J$
(see \S~3 in the appendix in~\cite{RevuzYor:99}),
which is typically denoted by $g''(\dd y)$
(see e.g.~\cite[Ch.~VI, Th.~1.5]{RevuzYor:99}).
An equivalent description of this object is as follows:
$g''(\dd y)$ is the locally finite signed measure on $J$
satisfying $g''((a,b])=g'_{+}(b)-g'_{+}(a)$,
$l<a<b<r$.
It follows that the Lebesgue decomposition
of $g''(\dd y)$ with respect to $\nu_{L}$
takes the form
\begin{equation*}
g''(\dd y)=g''(y)\,\dd y+g''_{s}(\dd y),
\end{equation*}
where the locally finite signed measure $g''_{s}(\dd y)$
on $J$ denotes the singular part of $g''(\dd y)$
with respect to~$\nu_{L}$.

In what follows, given a function $g$
satisfying~\eqref{eq:sem_dif1},
we define a locally finite signed measure $\nu_{g}$ on $J$
by the formula
\begin{equation}
\label{eq:nu_g}
\nu_g(\dd y):=
\left(\frac{g'\mu}{\sigma^2}+\frac12g''\right)(y)\dd y
+\frac12g_s''(\dd y).
\end{equation}
Below we use the following terminology:
\begin{align*}
&Y\text{\emph{ exits $J$ at }}r\text{ means }\PP\left(\zeta<\infty,\lim_{t\uparrow\zeta}Y_t=r\right)>0;\\
&Y\text{\emph{ exits $J$ at }}l\text{ is understood in an analogous way.}
\end{align*}
We distinguish between the following four cases:

\smallskip\noindent
(A) $Y$ exits $J$ neither at $l$ nor at~$r$;

\smallskip\noindent
(B) $Y$ exits $J$ at $l$, and there exists a finite limit
\begin{equation*}
g(l):=\lim_{x\downarrow l}g(x);
\end{equation*}
\phantom{(B) }$Y$ does not exit $J$ at~$r$;

\smallskip\noindent
(C) $Y$ exits $J$ at $r$, and there exists a finite limit
\begin{equation*}
g(r):=\lim_{x\uparrow r}g(x);
\end{equation*}
\phantom{(C) }$Y$ does not exit $J$ at~$l$;

\smallskip\noindent
(D) $Y$ exits $J$ at $l$ and at $r$, and there exist finite limits
\begin{equation*}
g(l):=\lim_{x\downarrow l}g(x)\quad\text{and}\quad g(r):=\lim_{x\uparrow r}g(x).
\end{equation*}

\smallskip\noindent
In each of these cases $g(Y)$ is well-defined globally
(i.e. on $[0,\infty)$) and finite,
and hence the question whether
$g(Y)$ is a semimartingale is well-posed. 

\begin{remark}
\label{rem:ChSemPr1}
By the It\^o-Tanaka formula
(see \cite[Ch.~VI, Th.~1.5]{RevuzYor:99}),
condition~\eqref{eq:sem_dif1} implies that
\begin{equation}
\label{eq:ChSemPr1}
(g(Y_{t}))_{t\in[0,\zeta)}
\text{ is a continuous semimartingale on }[0,\zeta).
\end{equation}
In fact, \eqref{eq:sem_dif1} is equivalent
to~\eqref{eq:ChSemPr1}.
In the Brownian case $\mu\equiv0,\sigma\equiv1$
(i.e. $Y$ is a Brownian motion absorbed at $l$ and~$r$)
this follows just as in the proofs
of Theorems~5.5 and~5.6 in~\cite{CinlarJacodProtterSharpe:80}
and is stated right after the proof of Lemma~5.10
in~\cite{CinlarJacodProtterSharpe:80}.
In general it remains to note that
\eqref{eq:sem_dif1} is equivalent to
\begin{equation*}
g\circ s^{-1}\colon s(J)\to\bbR
\text{ is a difference of two convex functions}
\end{equation*}
(under~\eqref{eq:set2} and~\eqref{eq:set3},
both $s$ and $s^{-1}$ are $C^{1}$-functions
with absolutely continuous derivatives
on compact subintervals in~$J$)
and refer to the discussion at the end
of Section~5 in~\cite{CinlarJacodProtterSharpe:80}.
Therefore, 
since condition~\eqref{eq:ChSemPr1}
is necessary for $g(Y)$
to be a semimartingale globally (i.e. on $[0,\infty)$),
assuming~\eqref{eq:sem_dif1}
and studying whether $g(Y)$ is a semimartingale
amounts to studying whether the loss
of the semimartingale property occurs
\emph{at} the time~$\zeta$.
\end{remark}

\textbf{Case (A).}
There is nothing to study in this case: $g(Y)$ is always a semimartingale.

\smallskip
\textbf{Case (B).}
First let us note that by Propositions~\ref{prop:set3}--\ref{prop:set5}, case~(B) amounts to the following:

\smallskip\noindent
(B.i) there is a finite limit $g(l):=\lim_{x\downarrow l}g(x)$;

\noindent
(B.ii) $s(l)>-\infty$ and
$\frac{s-s(l)}{\rho\sigma^2}\in L^1_\loc(l+)$;

\noindent
(B.iii) either $s(r)=\infty$ or it holds:
\begin{equation*}
s(r)<\infty\quad\text{and}\quad
\frac{s(r)-s}{\rho\sigma^2}\notin L^1_\loc(r-).
\end{equation*}

\begin{theorem}
\label{th:sem_dif1}
Assume \eqref{eq:sem_dif1} and case~(B).
Then $g(Y)$ is a semimartingale if and only if
\begin{equation}
\label{eq:sem_dif3}
\frac{s-s(l)}{\rho}\in L^1_\loc(l+,|\nu_g|),
\end{equation}
where the variation measure 
$|\nu_g|$
of the locally finite signed measure 
$\nu_g$, defined in~\eqref{eq:nu_g},
equals
\begin{equation*}
|\nu_g|(\dd y)=
\left|\frac{g'\mu}{\sigma^2}+\frac12g''\right|(y)\dd y+
\frac12|g_s''|(\dd y).
\end{equation*}
\end{theorem}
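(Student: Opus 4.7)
The plan is to combine the It\^o--Tanaka formula on $[0,\zeta)$ with the occupation-times formula to obtain an explicit continuous semimartingale decomposition of $g(Y)$ on the stochastic interval $[0,\zeta)$, then to reduce the semimartingale property of $g(Y)$ on all of $[0,\infty)$ to the $\PP$-a.s.\ finiteness of a single additive functional of $Y$ at the exit time~$\zeta$, and finally to translate this random condition into the deterministic criterion~\eqref{eq:sem_dif3} by means of the additive-functional theory to be developed in Section~\ref{sec:Additive_F}.

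First I would record the decomposition. Applying the It\^o--Tanaka formula to $g=g_{1}-g_{2}$ (with $g_{1},g_{2}$ convex) together with the occupation-times formula applied to the absolutely continuous drift, one obtains for every $t\in[0,\zeta)$
\begin{equation*}
g(Y_{t})=g(x_{0})+M_{t}+A_{t},\qquad
M_{t}=\int_{0}^{t}g'_{-}(Y_{s})\sigma(Y_{s})\,\dd W_{s},\qquad
A_{t}=\int_{J}L_{t}^{y}(Y)\,\nu_{g}(\dd y),
\end{equation*}
where $M$ is a continuous local martingale on $[0,\zeta)$ and $A$ is a continuous adapted process whose total variation on $[0,t]$ equals $\int_{J}L_{t}^{y}(Y)\,|\nu_{g}|(\dd y)$, because $L^{y}_{t}\ge 0$ and $\nu_{g}$ is a signed measure. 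In case~(B), conditions (B.i)--(B.iii) imply $\zeta<\infty$ $\PP$-a.s.\ with $\lim_{t\uparrow\zeta}Y_{t}=l$ and $\lim_{t\uparrow\zeta}g(Y_{t})=g(l)$.

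The \emph{reduction step} is the equivalence that $g(Y)$ is a semimartingale on $[0,\infty)$ if and only if $\int_{J}L_{\zeta}^{y}(Y)\,|\nu_{g}|(\dd y)<\infty$ $\PP$-a.s. For the ``only if'' direction, uniqueness of the canonical continuous semimartingale decomposition identifies $M$ and $A$ with the restrictions to $[0,\zeta)$ of the global local-martingale and finite-variation parts of $g(Y)$; monotone convergence applied to the total-variation identity for $A$ then yields the required a.s.\ finiteness. For the ``if'' direction, the assumed finiteness makes $A$ extend continuously across $\zeta$ with finite total variation; since $g(Y_{t})\to g(l)$ and $g(Y)$ is constant after~$\zeta$, the identity $M_{t}=g(Y_{t})-g(x_{0})-A_{t}$ forces $M$ to converge as $t\uparrow\zeta$, and the Dambis--Dubins--Schwarz representation of a continuous local martingale as a time-changed Brownian motion then produces $\la M\ra_{\zeta}<\infty$ $\PP$-a.s.; defining $M$ and $A$ constantly after $\zeta$ gives a global continuous semimartingale decomposition of $g(Y)$.

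The main obstacle is the remaining step, namely the translation of the random condition $\int_{J}L_{\zeta}^{y}(Y)\,|\nu_{g}|(\dd y)<\infty$ $\PP$-a.s.\ into the deterministic criterion~\eqref{eq:sem_dif3}. Since $y\mapsto L^{y}_{t}(Y)$ is supported in a compact subset of $J$ for each $t<\zeta$, only the behaviour of $L_{\zeta}^{y}(Y)$ near the endpoints of $J$ is relevant. Near the non-exit endpoint~$r$, condition~(B.iii) together with a recurrence argument ensures that $L_{\zeta}^{y}(Y)$ stays bounded in a left neighbourhood of~$r$, so integrability of $|\nu_{g}|$ is automatic there. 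Near the exit point~$l$ the analysis is more delicate: the scale transform $\wt Y=s(Y)$ composed with a random time change reduces $Y$ to a Brownian motion killed at its hitting time of $s(l)$, and a suitable Ray--Knight theorem describes the local-time profile $y\mapsto\rho(y)L_{\zeta}^{y}(Y)$ near~$l$ in terms of a squared-Bessel-type process of size $s(y)-s(l)$. This yields the expectation bound $\EE_{x_{0}}L_{\zeta}^{y}(Y)\asymp(s(y)-s(l))/\rho(y)$ as $y\downarrow l$ and, crucially, a zero-one dichotomy that upgrades the first-moment criterion to the almost-sure criterion~\eqref{eq:sem_dif3}. The hard part is establishing this dichotomy and handling the singular component $g''_{s}$ of $\nu_{g}$, which is precisely what the additive-functional analysis of Section~\ref{sec:Additive_F} is designed to accomplish.
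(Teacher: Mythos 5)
Your overall route is the same as the paper's: the It\^o--Tanaka decomposition $g(Y)=g(x_0)+M+A$ on $[0,\zeta)$ with $A_t=\int_J L^y_t(Y)\,\nu_g(\dd y)$, identification of $\Var A$ with $\int_J L^y_t(Y)\,|\nu_g|(\dd y)$, reduction of the global semimartingale property to the finiteness of this additive functional at $\zeta$, and the translation into \eqref{eq:sem_dif3} via the Ray--Knight analysis of Section~\ref{sec:Additive_F}. However, two steps as you state them have genuine gaps.

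First, case~(B) does \emph{not} imply $\zeta<\infty$ $\PP$-a.s.: it only requires $\PP(\zeta<\infty,\lim_{t\uparrow\zeta}Y_t=l)>0$, and when $s(r)<\infty$ (with (B.iii) holding through its second alternative) the event $B_r=\{\zeta=\infty,\lim_{t\to\infty}Y_t=r\}$ has positive probability. Consequently your reduction ``$g(Y)$ is a semimartingale iff $\int_J L^y_\zeta(Y)\,|\nu_g|(\dd y)<\infty$ $\PP$-a.s.'' is wrong as stated: on $B_r$ the support of $y\mapsto L^y_\infty(Y)$ reaches up to $r$ and the integral may be infinite there without affecting the semimartingale property (each $\Var A_t$, $t<\infty$, is still finite), and its finiteness near $r$ has nothing to do with \eqref{eq:sem_dif3}, which only constrains $|\nu_g|$ near $l$. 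The correct reduction, and the one the paper uses, is finiteness $\PP$-a.s.\ \emph{on} $\{\zeta<\infty\}$, combined with the zero--one dichotomy of Theorem~\ref{thm:Loc_time_int_nu_exit_BC} on the event $\{\lim_{t\uparrow\zeta}Y_t=l\}$ and the fact that this event has positive probability.

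Second, your justification of the identity $\Var A_t=\int_J L^y_t(Y)\,|\nu_g|(\dd y)$ (``because $L^y_t\ge0$ and $\nu_g$ is a signed measure'') only yields the inequality $\Var A_t\le\int_J L^y_t(Y)\,|\nu_g|(\dd y)$. The reverse inequality --- which is exactly what the ``only if'' direction needs, since there one argues semimartingale $\Rightarrow\Var A_\zeta<\infty\Rightarrow\int_J L^y_\zeta(Y)\,|\nu_g|(\dd y)<\infty$ --- requires showing that the time-measures $\int_J \dd L^y_s(Y)\,\nu_g^{+}(\dd y)$ and $\int_J \dd L^y_s(Y)\,\nu_g^{-}(\dd y)$ induced by the Jordan decomposition are mutually singular on $[0,\zeta)$. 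The paper proves this via Fubini, using that $\dd L^y_{\cdot}(Y)$ is carried by $\{s:Y_s=y\}$ and that the Jordan sets of $\nu_g$ are disjoint; some such argument is needed and is not supplied by positivity of the local time alone. With these two points repaired, your argument coincides with the paper's proof.
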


\begin{remark}
\label{rem:sem_dif1}
The proof of Theorem~\ref{th:sem_dif1}
will reveal 
that under~\eqref{eq:sem_dif3},
$g(Y)$ has the semimartingale decomposition
\begin{equation}
\label{eq:sem_dif3.5}
g(Y_t)=g(x_0)+A_t+M_t,\quad t\in[0,\infty),
\end{equation}
where
\begin{align}
\label{eq:sem_dif4}
A_t&=\int_JL_{t\wedge\zeta}^y(Y)\,\nu_g(\dd y),\quad t\in[0,\infty),\\
\label{eq:sem_dif5}
M_t&=\int_0^{t\wedge\zeta}(g'\sigma)(Y_u)\,\dd W_u,\quad t\in[0,\infty),
\end{align}
and the integrals in \eqref{eq:sem_dif4} and~\eqref{eq:sem_dif5} are well-defined.
The random field 
$\{L_t^y(Y):y\in J, t\in[0,\zeta)\}$
in~\eqref{eq:sem_dif4} denotes the local time
of the semimartingale 
$Y$
defined on the stochastic interval 
$[0,\zeta)$
(see Section~\ref{sec:Additive_F}
for further details and references on local time of
$Y$).
Note also that the local martingale
$M$
in~\eqref{eq:sem_dif5} does not depend
on the choice of 
$g'$
on any countable set.
In particular, on the set where the
left and the right derivatives of
$g$
do not coincide 
we can define 
$g'$
arbitrarily. 
\end{remark}

In the case the measure 
$g''(\dd y)$
is absolutely continuous with respect to 
$\nu_L$,
Theorem~\ref{th:sem_dif1}
implies the following
characterisation.

\begin{corollary}
\label{cor:sem_dif_twice_diff}
Assume 
$g\in C^{1}(J,\bbR)$
and that 
$g'$
is absolutely continuous on compact intervals in~$J$.
Then, in case~(B), it holds
that 
$g(Y)$ is a semimartingale if and only if
\begin{equation*}
\frac{s-s(l)}{\rho}\left|\frac{g'\mu}{\sigma^2}+\frac12g''\right|\in L^1_\loc(l+).
\end{equation*}
\end{corollary}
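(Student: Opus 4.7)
The plan is to derive the corollary as a direct consequence of Theorem~\ref{th:sem_dif1}. The hypothesis that $g\in C^{1}(J,\bbR)$ with $g'$ absolutely continuous on compact subintervals of $J$ is strictly stronger than~\eqref{eq:sem_dif1}, and the key observation is that the extra regularity forces the singular part $g''_{s}$ of the distributional second derivative to vanish. Once that is established,~\eqref{eq:sem_dif3} collapses to the purely $\nu_{L}$-based integrability condition in the statement.

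First I would check that~\eqref{eq:sem_dif1} holds so that Theorem~\ref{th:sem_dif1} is applicable. Since $g'$ is absolutely continuous on each compact subinterval, its classical derivative $g''$ exists $\nu_{L}$-a.e.\ and lies in $L^{1}_{\loc}(J)$. Decomposing $g''=(g'')^{+}-(g'')^{-}$ and integrating exhibits $g'$ as a difference of two nondecreasing functions on each compact subinterval, hence $g$ as a difference of two convex functions on $J$ (up to an affine term that can be absorbed into either summand). Thus~\eqref{eq:sem_dif1} is satisfied, and in particular the measure $\nu_{g}$ of~\eqref{eq:nu_g} and the variation measure $|\nu_{g}|$ are well-defined.

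Next I would identify the distributional second derivative $g''(\dd y)$ with the absolutely continuous measure $g''(y)\,\dd y$. By the description recalled after~\eqref{eq:sem_dif1}, $g''(\dd y)$ is the unique locally finite signed measure on $J$ with $g''((a,b])=g'_{+}(b)-g'_{+}(a)$ for all $l<a<b<r$. Since $g\in C^{1}$ one has $g'_{+}=g'$ everywhere, while absolute continuity of $g'$ on $[a,b]$ yields
\[
g'(b)-g'(a)=\int_{a}^{b}g''(y)\,\dd y.
\]
Uniqueness of the Lebesgue decomposition of $g''(\dd y)$ with respect to $\nu_{L}$ then forces $g''_{s}(\dd y)\equiv 0$, and hence
\[
|\nu_{g}|(\dd y)=\left|\frac{g'\mu}{\sigma^{2}}+\frac{1}{2}g''\right|(y)\,\dd y.
\]

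With this reduction in hand, the criterion~\eqref{eq:sem_dif3} of Theorem~\ref{th:sem_dif1} becomes
\[
\int_{(l,z)}\frac{s(y)-s(l)}{\rho(y)}\left|\frac{g'\mu}{\sigma^{2}}+\frac{1}{2}g''\right|(y)\,\dd y<\infty
\]
for some (equivalently, every) $z\in J$, which is exactly the membership of $\frac{s-s(l)}{\rho}\bigl|\tfrac{g'\mu}{\sigma^{2}}+\tfrac{1}{2}g''\bigr|$ in $L^{1}_{\loc}(l+)$. Applying Theorem~\ref{th:sem_dif1} in case~(B) then yields the corollary. The only step requiring genuine care is the identification of the distributional $g''(\dd y)$ with the classical $g''(y)\,\dd y$; this is a standard real-analysis fact, but it is the hinge on which the whole argument turns, so I would make it explicit. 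Beyond this, the proof is a transcription of Theorem~\ref{th:sem_dif1} into the smoother setting.
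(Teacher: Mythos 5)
Your proposal is correct and follows exactly the route the paper intends: the paper presents this corollary as an immediate consequence of Theorem~\ref{th:sem_dif1}, with Remark~\ref{rem:sem_dif1_cor} recording precisely your key observation that the extra regularity forces $g''_{s}(\dd y)\equiv 0$, so that $|\nu_{g}|$ is absolutely continuous with density $\bigl|\tfrac{g'\mu}{\sigma^{2}}+\tfrac12 g''\bigr|$ and \eqref{eq:sem_dif3} reduces to the stated condition. Your explicit verification that the hypotheses imply \eqref{eq:sem_dif1} and the identification of the distributional second derivative with the classical one are exactly the details the paper leaves implicit.
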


\begin{remark}
\label{rem:sem_dif1_cor}
Under the assumptions of Corollary~\ref{cor:sem_dif_twice_diff},
the signed measure
$g''_s(\dd y)$
is a zero measure
and  the finite variation process in the semimartingale decomposition~\eqref{eq:sem_dif3.5}
takes the form
\begin{equation*}
A_t=\int_0^{t\wedge\zeta}
\left(g'\mu+\frac12g''\sigma^2\right)(Y_u)
\,\dd u,\quad t\in[0,\infty).
\end{equation*}
\end{remark}

We now investigate when the process $Y$ itself
is a semimartingale.
To get a deterministic necessary and sufficient
condition it is now enough to apply
Theorem~\ref{th:sem_dif1}
or Corollary~\ref{cor:sem_dif_twice_diff}
with $g(x)=x$, $x\in J$.

\begin{corollary}
\label{cor:semdif1cor}
Assume that $l>-\infty$,
$Y$ exits $J$ at~$l$,
$Y$ does not exit $J$ at~$r$.
Then $Y$ is a semimartingale if and only if
\begin{equation*}
\frac{s-s(l)}\rho\frac{|\mu|}{\sigma^{2}}\in L^{1}_{\loc}(l+).
\end{equation*}
\end{corollary}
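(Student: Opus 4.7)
The proof is a direct application of Corollary~\ref{cor:sem_dif_twice_diff} (equivalently, Theorem~\ref{th:sem_dif1}) to the identity function $g(x)=x$ on $J$. The plan therefore unfolds in two short steps: verify that the hypotheses of the corollary are met with this choice of $g$, and then read off what the general criterion specialises to.

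First I would check the hypotheses. The identity is convex, hence trivially the difference of two convex functions; it is in $C^{1}(J,\bbR)$ with derivative $g'\equiv 1$, which is absolutely continuous on compact subintervals of $J$. Under the standing assumption that $l>-\infty$, the finite limit $g(l)=l$ exists, and combined with ``$Y$ exits $J$ at $l$'' and ``$Y$ does not exit $J$ at $r$'', the pair $(Y,g)$ falls into case~(B) of Section~\ref{sec:sem_dif} (conditions (B.i)--(B.iii) in the reformulation of case~(B) hold: (B.i) is immediate, while (B.ii) and (B.iii) are precisely the characterisations of exit at $l$ and of non-exit at $r$). Thus Corollary~\ref{cor:sem_dif_twice_diff} applies.

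Second I would read off the criterion. Since $g'\equiv 1$ and $g''\equiv 0$ on $J$, the integrability condition
$$
\frac{s-s(l)}{\rho}\left|\frac{g'\mu}{\sigma^{2}}+\tfrac12 g''\right|\in L^{1}_{\loc}(l+)
$$
supplied by Corollary~\ref{cor:sem_dif_twice_diff} collapses to
$$
\frac{s-s(l)}{\rho}\,\frac{|\mu|}{\sigma^{2}}\in L^{1}_{\loc}(l+),
$$
which is the stated characterisation. There is no genuine obstacle here beyond correctly matching the hypotheses: the only ``work'' is to observe that the three endpoint assumptions of the corollary ($l>-\infty$, exit at $l$, no exit at $r$) are exactly what places us into case~(B), after which the substitution $g=\mathrm{id}$ reduces the general criterion of Theorem~\ref{th:sem_dif1} to the form claimed.
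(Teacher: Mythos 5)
Your proposal is correct and is exactly the argument the paper intends: immediately before the corollary the authors state that it suffices to apply Theorem~\ref{th:sem_dif1} or Corollary~\ref{cor:sem_dif_twice_diff} with $g(x)=x$, and your verification that the endpoint assumptions place $(Y,g)$ in case~(B) together with the substitution $g'\equiv1$, $g''\equiv0$ is precisely how the general criterion collapses to the stated one.
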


In specific examples it may be hard to
check~\eqref{eq:sem_dif3}.
The following result, Theorem~\ref{th:sem_dif2},
gives an easy-to-check sufficient condition
for $g(Y)$ to be a semimartingale.
In Theorem~\ref{th:sem_dif3} below we  present a necessary condition for the semimartingale property of~$g(Y)$.

\begin{theorem}
\label{th:sem_dif2}
In addition to the assumptions of Theorem~\ref{th:sem_dif1} suppose that, for some $a\in J$,
\begin{equation}
\label{eq:sem_dif6}
\text{either}\quad 
\nu_g\vert_{(l,a)} \text{ is a positive}
\quad\text{or}\quad 
\nu_g\vert_{(l,a)} \text{ is a negative}\quad\text{measure}.
\end{equation}
Then $g(Y)$ is a semimartingale.
\end{theorem}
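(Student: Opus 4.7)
The plan is to invoke Theorem~\ref{th:sem_dif1} and verify the local integrability condition~\eqref{eq:sem_dif3} near $l$. First I replace $g$ by $-g$ if necessary: this negates $\nu_g$ but preserves the semimartingale property of $g(Y)$, so I may assume that $\nu_g\vert_{(l,a)}$ is a positive measure. Then $|\nu_g|=\nu_g$ on $(l,a)$, and it suffices to show
\[
\int_{(l,a)} \frac{s(y)-s(l)}{\rho(y)}\,\nu_g(\dd y)<\infty.
\]

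The key reformulation is to rewrite $\nu_g$ via the right-continuous function of locally bounded variation $U(y):=g'_+(y)/\rho(y)$. Since $1/\rho$ is absolutely continuous with $(1/\rho)'(y)=(2\mu/\sigma^2)(y)\cdot(1/\rho)(y)$ for $\nu_L$-a.e.~$y$, the Leibniz rule for a right-continuous BV function times a continuous AC function should give
\[
\dd U(y)=\frac{1}{\rho(y)}\,g''(\dd y)+\frac{2g'_+\mu}{\rho\sigma^2}(y)\,\dd y=\frac{2}{\rho(y)}\,\nu_g(\dd y),
\]
as locally finite signed measures on $J$. In particular, the sign hypothesis translates to: $U$ is nondecreasing on $(l,a)$, and the target integral equals $\tfrac{1}{2}\int_{(l,a)}(s(y)-s(l))\,\dd U(y)$.

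For $c\in(l,a)$, integration by parts against the $C^1$ function $y\mapsto s(y)-s(l)$ (whose derivative is $\rho$) yields
\[
\int_{(c,a]} (s(y)-s(l))\,\dd U(y) = (s(a)-s(l))U(a)-(s(c)-s(l))U(c)-(g(a)-g(c)),
\]
since $\int_c^a \rho(y) U(y)\,\dd y=\int_c^a g'_+(y)\,\dd y=g(a)-g(c)$ by the FTC on the compact interval $[c,a]$. The essential step is then to bound $(s(c)-s(l))U(c)$ from below uniformly in $c$. Using that $U$ is nondecreasing on $(l,c)$, $\rho>0$, and $g(l+)$ is finite by (B.i), I expect to deduce that $g'_+=\rho U\in L^1(l,c)$ with $g(c)-g(l+)=\int_l^c g'_+(y)\,\dd y$, and hence by monotonicity of $U$
\[
g(c)-g(l+)=\int_l^c \rho(y)U(y)\,\dd y\le U(c)(s(c)-s(l)).
\]
Substituting into the integration-by-parts identity gives the bound
\[
\int_{(c,a]}(s(y)-s(l))\,\dd U(y)\le (s(a)-s(l))U(a)+g(l+)-g(a),
\]
independent of $c$. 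Since the integrand is nonnegative and $\dd U\ge 0$ on $(l,a)$, monotone convergence as $c\downarrow l$ yields the required finiteness, and Theorem~\ref{th:sem_dif1} concludes.

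The main obstacle will be the integrability claim $g'_+\in L^1(l,c)$: a continuous DC function with a finite limit at $l$ is not automatically absolutely continuous down to $l$ in general. It is precisely the monotonicity of $U$ near $l$, enforced by the sign hypothesis on $\nu_g$, that permits decomposing $g'_+=\rho U$ into pieces of constant sign on small intervals and passing from the compact-interval identity $g(c)-g(c')=\int_{c'}^c g'_+\,\dd y$ to the limit $g(c)-g(l+)=\int_l^c g'_+\,\dd y$ via monotone convergence.
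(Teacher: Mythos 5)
Your proof is correct, but it takes a genuinely different route from the paper's. The paper's argument (item~\ref{it:sem_p7} of Section~\ref{sec:sem_p}) is probabilistic: the sign hypothesis~\eqref{eq:sem_dif6} makes the finite-variation part $\ol A_t=\int_J L_t^y(Y)\,\nu_g(\dd y)$ eventually monotone on $(\zeta-\eps,\zeta)$, so both $\ol A_t$ and $\ol M_t$ converge (to finite or infinite limits) as $t\uparrow\zeta$; the dichotomy for the local martingale part obtained from Theorem~\ref{thm:Loc_time_int_nu_exit_BC} (either a finite limit or oscillation between $+\infty$ and $-\infty$) then forces both limits to be finite, whence $\Var\ol A_\zeta<\infty$, whence~\eqref{eq:sem_dif3} by the alternative $(A_1)$/$(A_2)$, and Theorem~\ref{th:sem_dif1} concludes. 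You instead prove the purely analytic implication \eqref{eq:sem_dif6}$\Rightarrow$\eqref{eq:sem_dif3} directly --- precisely the statement that Remark~\ref{rem:sem_dif2}(i) singles out as not straightforward to prove analytically. Your key device, $U=g'_+/\rho$ with $\dd U=(2/\rho)\,\nu_g(\dd y)$, converts the sign hypothesis into monotonicity of $U$ near~$l$, and the integration-by-parts computation is then the exact diffusion analogue of the paper's own direct proof of Lemma~\ref{lem:LemRA2} (the Brownian case $\rho\equiv1$, $U=g'_+$). The delicate points all check out: the product rule for the right-continuous BV function $g'_+$ against the absolutely continuous $1/\rho$ is valid (the left-limit correction is Lebesgue-negligible); monotonicity of $U$ forces $g'_+=\rho U$ to have constant sign on some $(l,b)$, which justifies $g(c)-g(l+)=\int_l^c g'_+\,\dd y$ and hence $(s(c)-s(l))U(c)\ge g(c)-g(l+)$; and the resulting bound on $\int_{(c,b]}(s-s(l))\,\dd U$ is uniform in $c$, so monotone convergence applies. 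What your approach buys is self-containedness: it bypasses the additive-functional machinery of Section~\ref{sec:Additive_F} entirely and upgrades Remark~\ref{rem:sem_dif2}(i) to an actual real-analysis lemma. What the paper's approach buys is the dynamic picture (eventual monotonicity of the drift part before $\zeta$) and the reuse of machinery already needed for Theorems~\ref{th:sem_dif1}, \ref{th:sem_dif3} and~\ref{th:sem_dif4}.
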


\begin{remark}
\label{rem:sem_dif2}
\begin{enumerate}[(i)]
\item
In view of Theorem~\ref{th:sem_dif1},
there is an equivalent reformulation of Theorem~\ref{th:sem_dif2},
which appears to be purely analytic:
\emph{under the assumptions of Theorem~\ref{th:sem_dif1}, \eqref{eq:sem_dif6} implies~\eqref{eq:sem_dif3}.}
Let us observe that our proof is probabilistic and,
furthermore, the task of finding an analytic proof does not seem to be straightforward.

\item
Observe that \eqref{eq:sem_dif3}
does not imply~\eqref{eq:sem_dif6}.
For instance,
consider $J=(0,\infty)$, $\mu\equiv0$, $\sigma\equiv1$,
$g(x)=\int_1^x(2+\sin\frac1{\sqrt{y}})\,\dd y$,
$x\in[0,\infty)$.
\end{enumerate}
\end{remark}

\begin{corollary}
\label{cor:sem_dif1}
In addition to the assumptions of Theorem~\ref{th:sem_dif1} suppose that, for some $a\in J$,
\begin{equation*}
\mu=0\quad\nu_L\text{-a.e. on }(l,a)
\end{equation*}
and
\begin{equation*}
g\text{ is convex or concave on }(l,a).
\end{equation*}
Then $g(Y)$ is a semimartingale.
\end{corollary}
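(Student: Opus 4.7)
The plan is to derive this as a direct consequence of Theorem~\ref{th:sem_dif2}. Specifically, I would verify that the hypotheses of the corollary force the measure $\nu_g|_{(l,a)}$ defined by~\eqref{eq:nu_g} to have constant sign, and then quote Theorem~\ref{th:sem_dif2} verbatim.

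First I would recall that by~\eqref{eq:nu_g},
\begin{equation*}
\nu_g(\dd y) = \left(\frac{g'\mu}{\sigma^2} + \frac{1}{2}g''\right)(y)\,\dd y + \frac{1}{2}g''_s(\dd y).
\end{equation*}
Under the assumption $\mu = 0$ $\nu_L$-a.e.\ on $(l,a)$, the absolutely continuous contribution $\frac{g'\mu}{\sigma^2}(y)\,\dd y$ vanishes on $(l,a)$ (note that $g'$ is defined $\nu_L$-a.e., so the product is well-defined $\nu_L$-a.e.), hence
\begin{equation*}
\nu_g|_{(l,a)}(\dd y) = \tfrac{1}{2}g''(y)\,\dd y|_{(l,a)} + \tfrac{1}{2}g''_s(\dd y)|_{(l,a)} = \tfrac{1}{2}g''(\dd y)|_{(l,a)},
\end{equation*}
where $g''(\dd y)$ is the distributional second derivative of $g$ as introduced before~\eqref{eq:nu_g}.

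Next I would invoke the standard fact that the distributional second derivative of a convex function is a positive locally finite measure, while that of a concave function is a negative locally finite measure; this can be read off from the characterisation $g''((a',b']) = g'_+(b') - g'_+(a')$ together with the monotonicity of $g'_+$ for convex (resp.\ concave) $g$. Applied on $(l,a)$, this yields that $\nu_g|_{(l,a)}$ is either a positive or a negative measure, i.e.\ hypothesis~\eqref{eq:sem_dif6} of Theorem~\ref{th:sem_dif2} holds. The conclusion that $g(Y)$ is a semimartingale is then immediate from that theorem.

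There is no real obstacle here; the only point that requires a moment of care is the identification of $\nu_g|_{(l,a)}$ with $\frac{1}{2}g''(\dd y)|_{(l,a)}$, which relies on the Lebesgue-a.e.\ vanishing of $\mu$ on $(l,a)$ and the observation that the restriction of a locally finite signed measure to an open subinterval is again locally finite and signed. All remaining work is a direct appeal to Theorem~\ref{th:sem_dif2}.
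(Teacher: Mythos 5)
Your proof is correct and follows exactly the route the paper intends: the corollary is stated as an immediate consequence of Theorem~\ref{th:sem_dif2}, obtained by noting that $\mu=0$ $\nu_L$-a.e.\ on $(l,a)$ reduces $\nu_g|_{(l,a)}$ to $\tfrac12 g''(\dd y)|_{(l,a)}$, which is a positive (resp.\ negative) measure when $g$ is convex (resp.\ concave) there. Nothing is missing.
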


In particular, it immediately follows from
Corollary~\ref{cor:sem_dif1}
that $\sqrt{B^{\tau^{B}_{0}}}$
is a semimartingale
(see the discussion after Examples~\ref{ex:1}
and~\ref{ex:2}).
This can also be seen 
directly since, by Jensen's inequality,
the process
$\sqrt{B^{\tau^{B}_{0}}}$ 
is a supermartingale.

\begin{remark}
\label{rem:sem_dif3}
\begin{enumerate}[(i)]
\item\label{it:i}
Let $X$ be a continuous semimartingale satisfying 
$\PP(X_t\ge l\;\forall t\ge0)=1$ for some $l>-\infty$,
and $h\colon[l,\infty)\to\bbR$
a convex or concave function continuous at~$l$
with a finite derivative~$h'(l+)$.
Then $h(X)$ is a semimartingale by the It\^o-Tanaka formula
because such a function~$h$ can be extended
to a convex or concave function on~$\bbR$.
However, if $|h'(l+)|=\infty$, the It\^o-Tanaka formula
cannot be used to conclude that $h(X)$ is a semimartingale
(recall Examples~\ref{ex:1} and~\ref{ex:2},
where the semimartingale property is lost
for $h(\cdot)=\sqrt{\:\cdot\;}$, $l=0$).

\item
The statement in~\eqref{it:i} demonstrates that the gist of Corollary~\ref{cor:sem_dif1}
lies in the cases ${|g'(l+)|=\infty}$
or $l=-\infty$.
\end{enumerate}
\end{remark}

We now apply Theorem~\ref{th:sem_dif2}
to get a sufficient condition for $Y$ itself
to be a semimartingale.

\begin{corollary}
\label{cor:sem_dif2}
Assume that $l>-\infty$, $Y$ exits $J$ at~$l$, $Y$ does not exit $J$ at~$r$.
Further suppose that, for some $a\in J$,
\begin{equation*}
\text{either}\quad\mu\ge0\quad\nu_L\text{-a.e. on }(l,a)
\quad\text{or}\quad\mu\le0\quad\nu_L\text{-a.e. on }(l,a).
\end{equation*}
Then $Y$ is a semimartingale.
\end{corollary}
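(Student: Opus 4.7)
The plan is to reduce the corollary to Theorem~\ref{th:sem_dif2} applied to the identity function $g(x)=x$ on $J$. All the real work has been done in that theorem; what remains is a short verification that its hypotheses hold and that the sign condition on $\mu$ matches condition~\eqref{eq:sem_dif6}.

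First, I would check that the identity $g(x)=x$ satisfies~\eqref{eq:sem_dif1} and puts us in case~(B). The function $g(x)=x$ is convex, hence trivially a difference of two convex functions. Case~(B) requires that $Y$ exits $J$ only at $l$ (which is part of the hypothesis) and that the finite limit $g(l):=\lim_{x\downarrow l}g(x)$ exists; here this limit is just $l$, which is finite since $l>-\infty$. The remaining scale-function conditions (B.ii) and~(B.iii) are automatic consequences of the assumption that $Y$ exits at $l$ but not at $r$, via Propositions~\ref{prop:set3}--\ref{prop:set5} already invoked before Theorem~\ref{th:sem_dif1}.

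Next, I would compute $\nu_g$ from~\eqref{eq:nu_g} in the case $g(x)=x$. Since $g'\equiv 1$, $g''\equiv 0$ pointwise, and $g''_s(\dd y)\equiv 0$, the measure collapses to
\begin{equation*}
\nu_g(\dd y)=\frac{\mu(y)}{\sigma^2(y)}\,\dd y.
\end{equation*}
Because $\sigma(x)\ne 0$ for every $x\in J$ by~\eqref{eq:set2}, the sign of $\nu_g$ on a subinterval $(l,a)$ coincides with the sign of $\mu$ on that subinterval. Thus the assumption that $\mu\ge 0$ $\nu_L$-a.e.\ on $(l,a)$ forces $\nu_g|_{(l,a)}$ to be a positive measure, and $\mu\le 0$ $\nu_L$-a.e.\ on $(l,a)$ forces it to be a negative measure. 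Either way, condition~\eqref{eq:sem_dif6} is satisfied.

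Applying Theorem~\ref{th:sem_dif2} then yields that $g(Y)=Y$ is a semimartingale, as required. There is no substantive obstacle in this argument, since the entire analytic/probabilistic content is absorbed into Theorem~\ref{th:sem_dif2}; the only point to watch is that $g(x)=x$ genuinely falls under its hypotheses (i.e., case~(B) for the identity function), which is immediate from $l>-\infty$ and the exit assumptions on $Y$.
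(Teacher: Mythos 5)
Your proposal is correct and is exactly the argument the paper intends: the corollary is stated as an immediate application of Theorem~\ref{th:sem_dif2} with $g(x)=x$, for which $\nu_g(\dd y)=\frac{\mu}{\sigma^2}(y)\,\dd y$ inherits the sign of $\mu$ near $l$, so that~\eqref{eq:sem_dif6} holds. Your verification of case~(B) for the identity function and the sign computation match the paper's (implicit) reasoning.
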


In particular, it follows from Corollary~\ref{cor:sem_dif2}
that $\rho^{\tau^{\rho}_{0}}$ is a semimartingale
(see the discussion after Examples~\ref{ex:1} and~\ref{ex:2}).
Indeed, by It\^o's formula,
on the stochastic interval $[0,\tau^{\rho}_{0})$
it holds
$\dd\rho_{t}=\frac{\delta-1}{2\rho_{t}}dt+dW_{t}$,
hence Corollary~\ref{cor:sem_dif2}
applies with $J=(0,\infty)$,
$\sigma\equiv1$,
$\mu(y)=\frac{\delta-1}{2y}\le0$, $y\in J$.

It is interesting to note that even though
Corollary~\ref{cor:semdif1cor}
gives a more precise result than
Corollary~\ref{cor:sem_dif2},
the latter is sometimes more convenient.
For instance, we can conclude from Corollary~\ref{cor:sem_dif2}
(but not from Corollary~\ref{cor:semdif1cor})
that for $Y$ to fail the semimartingale property,
the drift $\mu$ has to oscillate around zero
near the boundary point~$l$.
Such examples will be constructed below.

We now present a necessary condition for $g(Y)$ to be a semimartingale.

\begin{theorem}
\label{th:sem_dif3}
Under the assumptions of Theorem~\ref{th:sem_dif1} let $g(Y)$ be a semimartingale. Then
\begin{equation}
\label{eq:sem_dif7}
\frac{s-s(l)}{\rho}(g')^2\in L^1_\loc(l+).
\end{equation}
\end{theorem}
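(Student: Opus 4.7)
The plan is to combine the It\^o-Tanaka decomposition of $g(Y)$ on the stochastic interval $[0,\zeta)$ with the deterministic criterion for the convergence of additive functionals $\int_J L^y_\cdot(Y)\,\nu(\dd y)$ at time $\zeta$, which is developed in Section~\ref{sec:Additive_F}.

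First, by~\eqref{eq:ChSemPr1} and the It\^o-Tanaka formula, on $[0,\zeta)$ the process $g(Y)$ admits the decomposition
\begin{equation*}
g(Y_t)=g(x_0)+\int_{J}L^{y}_{t}(Y)\,\nu_g(\dd y)+\int_{0}^{t}(g'\sigma)(Y_u)\,\dd W_u,\quad t<\zeta,
\end{equation*}
where the first integral is continuous of finite variation and $M_t:=\int_0^t(g'\sigma)(Y_u)\,\dd W_u$ is a continuous local martingale. If $g(Y)$ is a semimartingale on $[0,\infty)$ with canonical decomposition $g(Y)=g(x_0)+V+N$, uniqueness of the semimartingale decomposition forces $N|_{[0,\zeta)}=M$. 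Since $N$ is a continuous local martingale on all of $[0,\infty)$, its quadratic variation $\langle N\rangle_t$ is a.s.\ finite for every finite $t$. Therefore, on the positive-probability event $\{\zeta<\infty\}$ (nonempty by the case~(B) assumption), $\langle M\rangle_\zeta=\langle N\rangle_\zeta<\infty$ a.s.

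An application of the occupation times formula (using $\dd\langle Y\rangle_u=\sigma^2(Y_u)\,\dd u$) rewrites the latter quadratic variation as
\begin{equation*}
\langle M\rangle_\zeta=\int_{0}^{\zeta}(g')^{2}(Y_u)\sigma^{2}(Y_u)\,\dd u=\int_{J}(g')^{2}(y)\,L^{y}_{\zeta}(Y)\,\dd y.
\end{equation*}
Consequently, the additive functional $\int_J L^{y}_{\zeta}(Y)(g')^{2}(y)\,\dd y$ is a.s.\ finite on the positive-probability event $\{Y_\zeta=l\}$. At this point the deterministic criterion from Section~\ref{sec:Additive_F} applies: for a positive Borel measure $\nu$ on $J$, the a.s.\ finiteness on $\{Y_\zeta=l\}$ of $\int_{(l,a)}L^{y}_{\zeta}(Y)\,\nu(\dd y)$ is equivalent to $\int_{(l,a)}\tfrac{s(y)-s(l)}{\rho(y)}\,\nu(\dd y)<\infty$ for some $a\in J$. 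Specialising to $\nu(\dd y)=(g')^{2}(y)\,\dd y$ yields~\eqref{eq:sem_dif7}.

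The main obstacle is the deterministic criterion invoked in the last step. Its proof proceeds via a Ray--Knight type description of $L^{\cdot}_{\zeta}(Y)$ near the accessible endpoint~$l$: after the scale transformation $Z=s(Y)$ and a Dubins--Schwarz time-change, $Z$ behaves near $l$ like a Brownian motion absorbed at $0$, whose spatial local time profile $z\mapsto L^{z}_{\tau_0}(B)$ is a squared Bessel process of dimension two started at the origin. The identity $\EE[L^{y}_{\zeta}(Y)]=2(s(y)-s(l))/\rho(y)$ for $y$ near $l$, together with the concentration of the squared Bessel profile provided by the Ray--Knight description, is what allows the conversion of almost sure finiteness of the random integral into the deterministic integrability of $(s-s(l))/\rho$ against~$\nu$.
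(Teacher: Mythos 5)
Your argument is correct and follows essentially the same route as the paper: It\^o--Tanaka on $[0,\zeta)$, uniqueness of the semimartingale decomposition to identify the martingale part, the occupation times formula to express $\la M,M\ra_\zeta$ as $\int_J L^y_\zeta(Y)(g')^2(y)\,\dd y$, and the deterministic zero--one criterion of Theorem~\ref{thm:Loc_time_int_nu_exit_BC} (proved via Ray--Knight) applied with $\nu(\dd y)=(g')^2(y)\,\dd y$. The only cosmetic difference is that the paper passes through the existence of a finite limit of the local martingale at $\zeta$ and the Dambis--Dubins--Schwarz dichotomy, whereas you argue directly from the a.s.\ finiteness of $\la N,N\ra_\zeta$; these are interchangeable.
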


Put differently, if \eqref{eq:sem_dif7} is violated, then $g(Y)$ is not a semimartingale.
Let us note that in specific situations it may be easier to see that \eqref{eq:sem_dif7}
is violated than that \eqref{eq:sem_dif3} is violated.

\begin{remark}
\label{rem:sem_dif4}
In the language of analysis, Theorem~\ref{th:sem_dif3} can be recast as follows:
\emph{under the assumptions of Theorem~\ref{th:sem_dif1}, \eqref{eq:sem_dif3} implies~\eqref{eq:sem_dif7}.}
Again we observe that our proof is probabilistic
and that an analytic proof appears not to be straightforward.
Note also that~\eqref{eq:sem_dif7} does not in general imply~\eqref{eq:sem_dif3}
(see Example~\ref{ex:sem_ex1} below).
\end{remark}

Finally, we characterise the phenomena that lead to the loss of the semimartingale property of~$g(Y)$.
As in~\cite{JacodShiryaev:03} we will denote
by $\Var A=(\Var A_t)_{t\in[0,\infty)}$
the variation process of a process $A=(A_t)_{t\in[0,\infty)}$.
Let the assumptions of Theorem~\ref{th:sem_dif1} hold
(in particular, $\PP(\zeta<\infty)>0$)
and let $g(Y)$ be a non-semimartingale.
Decomposition~\eqref{eq:sem_dif3.5}
with $A$
and $M$
given by~\eqref{eq:sem_dif4} and~\eqref{eq:sem_dif5}
still holds, but only on the stochastic interval $[0,\zeta)$
(also $A=(A_{t})_{t\in[0,\zeta)}$
and $M=(M_{t})_{t\in[0,\zeta)}$
are in general well-defined only on $[0,\zeta)$,
$A$ has a locally finite variation on $[0,\zeta)$,
$M$ is a local martingale on $[0,\zeta)$).
We use this decomposition on the stochastic interval
$[0,\zeta)$
in the following definition.

\begin{definition}
\label{def:sem_dif1}
Let the assumptions of Theorem~\ref{th:sem_dif1} hold
and let $g(Y)$ be a non-semimartingale.

(i) We say that $g(Y)$ is a non-semimartingale
\emph{of the first kind}
if $\PP$-a.s. on $\{\zeta<\infty\}$ there are finite limits
\begin{equation*}
M_{\zeta}=\lim_{t\uparrow\zeta}M_{t}
\quad\text{and}\quad
A_{\zeta}=\lim_{t\uparrow\zeta}A_{t}.
\end{equation*}

(ii) We say that $g(Y)$ is a non-semimartingale
\emph{of the second kind}
if $\PP$-a.s. on $\{\zeta<\infty\}$ it holds
\begin{equation*}
\limsup_{t\uparrow\zeta}M_t=-\liminf_{t\uparrow\zeta}M_t=\infty
\quad\text{and}\quad
\limsup_{t\uparrow\zeta}A_t=-\liminf_{t\uparrow\zeta}A_t=\infty.
\end{equation*}
\end{definition}

We will now see that $g(Y)$ can lose
the semimartingale property in these two ways only.
Moreover, we have the following characterisation result.

\begin{theorem}
\label{th:sem_dif4}
Let the assumptions of Theorem~\ref{th:sem_dif1} hold.

(i) $g(Y)$ is a non-semimartingale of the first kind
if and only if \eqref{eq:sem_dif7} holds
and \eqref{eq:sem_dif3} is violated.
In this case the process
$(M_{t\wedge\zeta})_{t\in[0,\infty)}$
is a continuous local martingale on $[0,\infty)$
(not only on $[0,\zeta)$), but
$\Var A_{\zeta}=\infty$ $\PP$-a.s. on~$\{\zeta<\infty\}$.

(ii) $g(Y)$ is a non-semimartingale of the second kind
if and only if \eqref{eq:sem_dif7} is violated.
\end{theorem}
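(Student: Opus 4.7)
\textbf{Proof plan for Theorem~\ref{th:sem_dif4}.}
The plan hinges on two identifications that hold on the stochastic interval $[0,\zeta)$ via the occupation times formula. With $M$ and $A$ as in \eqref{eq:sem_dif4}--\eqref{eq:sem_dif5}, one has
\begin{equation*}
\la M\ra_\zeta=\int_0^\zeta (g'\sigma)^2(Y_u)\,\dd u=\int_J L_\zeta^y(Y)\,(g')^2(y)\,\dd y,
\end{equation*}
and, since $t\mapsto L_t^y(Y)$ is nondecreasing, the total variation of $A$ on $[0,\zeta)$ equals
\begin{equation*}
\Var A_\zeta=\int_J L_\zeta^y(Y)\,|\nu_g|(\dd y).
\end{equation*}
Using the results of Section~\ref{sec:Additive_F} that classify when such additive functionals $\int_J L_\zeta^y(Y)\,\nu(\dd y)$ are a.s.\ finite or a.s.\ infinite on $\{\zeta<\infty\}$, one obtains the deterministic dichotomies
\begin{equation*}
\la M\ra_\zeta<\infty\ \text{a.s.\ on}\ \{\zeta<\infty\}\iff\eqref{eq:sem_dif7},\qquad
\Var A_\zeta<\infty\ \text{a.s.\ on}\ \{\zeta<\infty\}\iff\eqref{eq:sem_dif3}.
\end{equation*}
Note that by Theorem~\ref{th:sem_dif1}, the hypothesis that $g(Y)$ is a non-semimartingale is equivalent to the failure of \eqref{eq:sem_dif3}.

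Next I would invoke the Dambis-Dubins-Schwarz representation for the continuous local martingale $M$ on $[0,\zeta)$: writing $M_t=\beta_{\la M\ra_t}$ for a Brownian motion $\beta$, the standard dichotomy for Brownian motion yields, a.s.\ on $\{\zeta<\infty\}$, that either $\la M\ra_\zeta<\infty$ and $M_\zeta:=\lim_{t\uparrow\zeta}M_t$ exists finitely, or $\la M\ra_\zeta=\infty$ and $\limsup_{t\uparrow\zeta}M_t=-\liminf_{t\uparrow\zeta}M_t=\infty$. In case~(B) the process $g(Y)$ has the finite limit $g(l)$ at $\zeta$ on $\{\zeta<\infty\}$. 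Combining this with $A_t=g(Y_t)-g(x_0)-M_t$ on $[0,\zeta)$, the oscillation pattern of $M$ transfers to $A$ (with the opposite sign): $M$ converges finitely iff $A$ does, and $M$ oscillates to $\pm\infty$ iff $A$ does.

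Part~(i): if \eqref{eq:sem_dif7} holds and \eqref{eq:sem_dif3} fails, the DDS dichotomy forces $M_\zeta$ and hence $A_\zeta=g(l)-g(x_0)-M_\zeta$ to be finite limits on $\{\zeta<\infty\}$, so $g(Y)$ is of the first kind. Setting $M_t:=M_\zeta$ for $t\geq\zeta$ and localizing along any sequence $\tau_n\uparrow\zeta$ with $\tau_n<\zeta$ (available because $\zeta$ is predictable), the resulting process $(M_{t\wedge\zeta})_{t\in[0,\infty)}$ is a continuous local martingale on $[0,\infty)$, while the failure of \eqref{eq:sem_dif3} gives $\Var A_\zeta=\infty$ a.s.\ on $\{\zeta<\infty\}$. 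Conversely, if $g(Y)$ is of the first kind then $M_\zeta$ is a finite limit, so the DDS dichotomy forces $\la M\ra_\zeta<\infty$ and therefore \eqref{eq:sem_dif7} holds; failure of \eqref{eq:sem_dif3} is built into the assumption $g(Y)\notin$ semimartingales via Theorem~\ref{th:sem_dif1}.

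Part~(ii) is the complement: if \eqref{eq:sem_dif7} fails then $\la M\ra_\zeta=\infty$ a.s.\ on $\{\zeta<\infty\}$, so by DDS $M$ oscillates to $\pm\infty$, and by the identity $A_t=g(Y_t)-g(x_0)-M_t$ with $g(Y_t)\to g(l)$, so does $A$; this is the second kind. Conversely, if $g(Y)$ is of the second kind then $M$ does not converge finitely, forcing $\la M\ra_\zeta=\infty$ and hence the failure of \eqref{eq:sem_dif7}. The main substantive obstacle is the deterministic criterion for convergence/divergence of $\int_JL_\zeta^y(Y)\,\nu(\dd y)$ at $\zeta$, i.e.\ showing that the dichotomy is indeed deterministic and sharp; this is precisely what Section~\ref{sec:Additive_F} delivers via the Ray-Knight theorem, and once it is in hand the present theorem follows by the bookkeeping above.
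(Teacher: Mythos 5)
Your plan is correct and follows essentially the same route as the paper: the paper proves Theorem~\ref{th:sem_dif4} by combining the alternatives $(M_1)$/$(M_2)$ and $(A_1)$/$(A_2)$ (obtained from Theorem~\ref{thm:Loc_time_int_nu_exit_BC} applied with $\nu(\dd y)=(g')^2(y)\,\dd y$ and $\nu=|\nu_g|$), the Dambis--Dubins--Schwarz dichotomy for $\ol M$, and the transfer of convergence/oscillation between $\ol M$ and $\ol A$ through the decomposition of $g(Y)$ on $[0,\zeta)$, exactly as you describe. The one step you gloss over is the identity $\Var A_\zeta=\int_J L^y_\zeta(Y)\,|\nu_g|(\dd y)$, which does not follow from monotonicity of $t\mapsto L^y_t(Y)$ alone: one must verify that the decomposition $\ol A=\int_J L^y_\cdot(Y)\,\nu_g^+(\dd y)-\int_J L^y_\cdot(Y)\,\nu_g^-(\dd y)$ is minimal, i.e.\ that the two induced measures on $[0,\zeta)$ are mutually singular, which the paper establishes via a pathwise Jordan decomposition and a Fubini argument.
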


\smallskip
\textbf{Cases (C) and (D)}
are treated similarly to case~(B).
For instance, the counterpart of Theorem~\ref{th:sem_dif1} in case~(D) is as follows:
under \eqref{eq:sem_dif1},
$g(Y)$ is a semimartingale if and only if
\begin{equation*}
\frac{s-s(l)}{\rho}\in L^1_\loc(l+,|\nu_g|)
\quad\text{and}\quad
\frac{s(r)-s}{\rho}\in L^1_\loc(r-,|\nu_g|).
\end{equation*}
We omit further details.

\section{Examples}
\label{sec:sem_ex}
\newpar{ Answer to Question~I.}
\label{par:sem_ex1}
Let $B$ be an $(\cF_t,\PP)$-Brownian motion starting from $x_0>0$
defined on some filtered probability space $(\Omega,\cF,(\cF_t)_{t\in[0,\infty)},\PP)$.
Question~I in the introduction asks
whether it is possible
to find a function $g\colon[0,\infty)\to\bbR$ satisfying
\begin{equation}
\label{eq:sem_ex1}
g\in C([0,\infty),\bbR)\cap C^\infty((0,\infty),\bbR)
\end{equation}
and
\begin{equation}
\label{eq:sem_ex2}
g\text{ is strictly increasing}
\end{equation}
such that $(g(B_{t\wedge\tau^B_0}))_{t\in[0,\infty)}$ is not a semimartingale,
where $\tau^{B}_{0}=\inf\{t\ge0:B_{t}=0\}$.
Following the discussion at the end
of Section~\ref{sec:sem_dif}
(see in particular Definition~\ref{def:sem_dif1} and Theorem~\ref{th:sem_dif4}),
wo further natural subquestions arise:

\smallskip\noindent
(a) Can $g(B^{\tau_0^B})$ be a non-semimartingale of the first kind?

\smallskip\noindent
(b) Can $g(B^{\tau_0^B})$ be a non-semimartingale of the second kind?

\smallskip\noindent
The present setting here is a special case of the setting in Section~\ref{sec:sem_dif}
with $J=(0,\infty)$, $\mu\equiv0$, $\sigma\equiv1$, and we are in case~(B)
(note that condition~\eqref{eq:sem_dif1} and the existence of a finite limit
$g(0):=\lim_{x\downarrow0}g(x)$ hold due to~\eqref{eq:sem_ex1}).
Conditions \eqref{eq:sem_dif3} and~\eqref{eq:sem_dif7} take  the form
\begin{equation}
\label{eq:sem_ex3}
x|g''(x)|\in L^1_\loc(0+)
\end{equation}
and
\begin{equation}
\label{eq:sem_ex4}
x(g'(x))^2\in L^1_\loc(0+)
\end{equation}
respectively.
Thus, question~(a) above amounts to constructing a function $g\colon[0,\infty)\to\bbR$
satisfying \eqref{eq:sem_ex1}, \eqref{eq:sem_ex2} and~\eqref{eq:sem_ex4}, such that \eqref{eq:sem_ex3} is violated;
question~(b) amounts to constructing a function $g$ satisfying \eqref{eq:sem_ex1} and~\eqref{eq:sem_ex2},
such that \eqref{eq:sem_ex4} is violated. The answers to both questions (a) and~(b) are affirmative.
We now construct both examples.

\begin{example}[$g(B^{\tau^B_0})$ is a non-semimartingale of the first kind]
\label{ex:sem_ex1}
\mbox{}\\
Let us consider the function $h\colon(0,\infty)\to\bbR$ given by
\begin{equation*}
h(x)=\frac1{\sqrt{x}}\left(2+\sin\frac1x\right),\quad x\in(0,\infty).
\end{equation*}
It is easy to see that $h$ satisfies
\begin{align}
\label{eq:sem_ex5}
h&\in C^\infty((0,\infty),\bbR),\\
\label{eq:sem_ex6}
h(x)&>0\;\;\forall x\in(0,\infty),\\
\label{eq:sem_ex7}
h&\in L^1_\loc(0+),\\
xh^2(x)&\in L^1_\loc(0+),\\
x|h'(x)|&\notin L^1_\loc(0+).
\end{align}
Setting
\begin{equation*}
g(x)=\int_1^x h(y)\,\dd y,\quad x\in[0,\infty)
\end{equation*}
(note that $g(0)$ is finite due to~\eqref{eq:sem_ex7}),
we get a function $g$ satisfying \eqref{eq:sem_ex1}, \eqref{eq:sem_ex2}, and~\eqref{eq:sem_ex4}
such that \eqref{eq:sem_ex3} is violated, which is what was required.
\end{example}

\begin{example}[$g(B^{\tau^B_0})$ is a non-semimartingale of the second kind]
\label{ex:sem_ex2}
\mbox{}\\
Let us set
\begin{align*}
a_n&=\frac1n-\frac1{n^4},\quad n=2,3,\ldots,\\
b_n&=\frac1n+\frac1{n^4},\quad n=2,3,\ldots,\\
E&=\bigcup_{n=2}^\infty(a_n,b_n)
\end{align*}
and define the strictly positive function
\begin{equation*}
\ol h(x)=\begin{cases}
\frac1{x^2}&\text{if }x\in E,\\
\frac1{\sqrt{x}}&\text{if }x\in(0,\infty)\setminus E.
\end{cases}
\end{equation*}
Since $\int_{a_n}^{b_n}\frac{\dd x}{x^2}=\frac{b_n-a_n}{a_nb_n}\sim\frac{\const}{n^2}$ as $n\to\infty$,
we get $\ol h\in L^1_\loc(0+)$.
It follows from
$\int_{a_n}^{b_n}\frac{\dd x}{x^3}\ge\frac1{b_n}\int_{a_n}^{b_n}\frac{\dd x}{x^2}\sim\frac{\const}{b_nn^2}\sim\frac{\const}n$
as $n\to\infty$ that $x\ol h^2(x)\notin L^1_\loc(0+)$.
It is clear that such a function $\ol h$ can be smoothened in the neighbourhoods of the points $a_n$ and $b_n$, $n=2,3,\ldots$,
so that we get a function $h\colon(0,\infty)\to\bbR$ satisfying \eqref{eq:sem_ex5}--\eqref{eq:sem_ex7} and
\begin{equation*}
xh^2(x)\notin L^1_\loc(0+).
\end{equation*}
Setting
\begin{equation*}
g(x)=\int_1^xh(y)\,\dd y,\quad x\in[0,\infty),
\end{equation*}
we get a function $g$ satisfying \eqref{eq:sem_ex1} and~\eqref{eq:sem_ex2}
such that \eqref{eq:sem_ex4} is violated.
\end{example}

\newpar{ Answer to Question~II.}
\label{par:sem_ex2}
Let us consider the setting and notation of Section~\ref{sec:set}.
Question~II in the introduction asks
whether $Y$ can fail to be a semimartingale
whenever $Y$ exits $J$ only at finite endpoints.
Let us consider case~(B) of Section~\ref{sec:sem_dif}
with $l>-\infty$ and $g(x)=x$, $x\in J$.
Now two further natural subquestions arise:

\smallskip\noindent
(c) Can $Y$ be a non-semimartingale of the first kind?

\smallskip\noindent
(d) Can $Y$ be a non-semimartingale of the second kind?

\smallskip\noindent
The answers to both questions are affirmative.
The examples are obtained from Examples \ref{ex:sem_ex1} and~\ref{ex:sem_ex2}
by setting $J:=(g(0),g(\infty))$ and $Y:=g(B^{\tau^B_0})$
(that is,
$\mu=\frac12g''\circ g^{-1}$, $\sigma=g'\circ g^{-1}$).

\section{Further Discussions in the Brownian Case}
\label{sec:DiscBC}
In this section we discuss in more detail
the particular case,
where $Y$ is a Brownian motion
stopped upon hitting zero,
i.e. the case $J=(0,\infty)$,
$\mu\equiv0$, $\sigma\equiv1$.

\newpar{ Two Lemmas from Real Analysis.}
\label{par:LemRA}
We will need the following result from real analysis,
which is also of independent interest. 

\begin{lemma}
\label{lem:LemRA1}
For some $a>0$, let
\begin{gather}
\label{eq:LemRA1}
g\colon(0,a)\to\bbR
\text{ be a difference of two convex functions,}\\
\label{eq:LemRA2}
\int_{(0,u]}x\,|g''|(\dd x)<\infty
\end{gather}
for some $u\in(0,a)$. Then
\begin{gather}
\label{eq:LemRA3}
\text{there exists a finite limit }
g(0):=\lim_{x\downarrow0}g(x),\\
\label{eq:LemRA4}
\int_{(0,u]}x(g'(x))^{2}\,\dd x<\infty.
\end{gather}
\end{lemma}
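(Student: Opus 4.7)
The plan is to reduce everything to the formula expressing $g$ in terms of boundary data of $g'_+$ and the signed measure $g''(\dd y)$, obtained by integration by parts (or Fubini). For $0<c<b<u$ one has $g'_+(b)-g'_+(c)=g''((c,b])$, hence integrating on $[c,u]$ with a second Fubini swap yields
\begin{equation*}
g(u)-g(c)=\int_c^u g'_+(x)\,\dd x=g'_+(u)(u-c)-\int_{(c,u]}(y-c)\,g''(\dd y).
\end{equation*}

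To obtain~\eqref{eq:LemRA3}, I would let $c\downarrow 0$ in this identity. The integrand $(y-c)\mathbf{1}_{(c,u]}(y)$ converges pointwise to $y\mathbf{1}_{(0,u]}(y)$ and is dominated by $y$, which is $|g''|$-integrable on $(0,u]$ by hypothesis~\eqref{eq:LemRA2}. Dominated convergence then gives
\begin{equation*}
\lim_{c\downarrow 0}g(c)=g(u)-g'_+(u)u+\int_{(0,u]}y\,g''(\dd y),
\end{equation*}
which is finite, proving~\eqref{eq:LemRA3}.

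For~\eqref{eq:LemRA4}, the starting point is the pointwise bound $|g'_+(x)|\le|g'_+(u)|+F(x)$ on $(0,u)$, where $F(x):=|g''|((x,u])$. Since $g'=g'_+$ outside a countable set, it suffices to prove $\int_0^u x\,F(x)^2\,\dd x<\infty$ (the term coming from $|g'_+(u)|^2$ clearly contributes a finite amount). Writing $F(x)^2$ as a double integral against $|g''|\otimes|g''|$ and applying Fubini, I get
\begin{equation*}
\int_0^u x\,F(x)^2\,\dd x
=\int_{(0,u]}\int_{(0,u]}\Bigl(\int_0^{\min(y_1,y_2)}x\,\dd x\Bigr)|g''|(\dd y_1)|g''|(\dd y_2)
=\frac12\int_{(0,u]^2}(\min(y_1,y_2))^2\,|g''|(\dd y_1)|g''|(\dd y_2).
\end{equation*}
The elementary inequality $(\min(y_1,y_2))^2\le y_1y_2$ then gives the bound
\begin{equation*}
\int_0^u x\,F(x)^2\,\dd x\le\frac12\Bigl(\int_{(0,u]}y\,|g''|(\dd y)\Bigr)^2<\infty,
\end{equation*}
again by~\eqref{eq:LemRA2}, establishing~\eqref{eq:LemRA4}.

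The arithmetic is entirely routine; the only mildly non-obvious ingredient is the squaring-by-Fubini step combined with the bound $(\min(y_1,y_2))^2\le y_1 y_2$, which converts the quadratic estimate into a weighted $L^1$-integrability condition. Once this trick is in place, both conclusions follow immediately from the hypothesis.
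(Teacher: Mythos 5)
Your argument is correct, and it takes a genuinely different route from the paper's on both conclusions. For~\eqref{eq:LemRA3} the paper argues by contradiction: it reduces to a single convex summand $h$ with $\int_{(0,u]}x\,h''(\dd x)<\infty$ and $\lim_{x\downarrow0}h(x)=\infty$, integrates by parts over $(\eps,u]$, and derives a contradiction from the sign of $-\eps h'_+(\eps)$. You instead prove convergence of $g(c)$ directly, by dominated convergence in the integration-by-parts identity; this avoids the reduction to a convex summand altogether and even yields an explicit formula $g(0)=g(u)-ug'_+(u)+\int_{(0,u]}y\,g''(\dd y)$. For~\eqref{eq:LemRA4} the paper integrates $x(g'_+(x))^2$ by parts, obtaining the term $\int_{(\eps,u]}x^2g'_+(x)\,g''(\dd x)$, and controls it via the existence of a finite $\lim_{\eps\downarrow0}\eps g'_+(\eps)$ — a fact which itself uses~\eqref{eq:LemRA3}. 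You instead dominate $|g'_+(x)|$ by $|g'_+(u)|+F(x)$ with $F(x)=|g''|((x,u])$ and compute $\int_0^u xF(x)^2\,\dd x$ by Tonelli together with $\bigl(\min(y_1,y_2)\bigr)^2\le y_1y_2$. This buys you two things the paper's proof does not exhibit: an explicit quantitative bound $\int_0^u xF(x)^2\,\dd x\le\tfrac12\bigl(\int_{(0,u]}y\,|g''|(\dd y)\bigr)^2$, and the fact that \eqref{eq:LemRA4} follows from~\eqref{eq:LemRA1}--\eqref{eq:LemRA2} alone, without passing through~\eqref{eq:LemRA3}. The only steps you gloss over are routine: the cross term in $\bigl(|g'_+(u)|+F(x)\bigr)^2$ is handled by $(A+B)^2\le 2A^2+2B^2$, and the Fubini swaps are justified since $|g''|$ is finite on $(c,u]$ for $c>0$ (for the signed identity) and all integrands are nonnegative (for the Tonelli step).
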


Let us recall that $g''(\dd x)$
is the locally finite signed measure on $(0,a)$
satisfying
$g''((x,y])=g'_{+}(y)-g'_{+}(x)$,
$0<x<y<a$,
and $|g''|(\dd x)$ is the variation measure of~$g''(\dd x)$.
Let us further note that statement~\eqref{eq:LemRA4}
does not depend on the definition of the integrand
on the (at most countable) set
where $g'$ does not exist.
For more details, see the discussion
in the beginning of Section~\ref{sec:sem_dif}.

Let us observe that Lemma~\ref{lem:LemRA1}
is a refinement of the analytical statement
implied by Theorems~\ref{th:sem_dif1}
and~\ref{th:sem_dif3}
in the Brownian case.
Indeed, Remark~\ref{rem:sem_dif4} states that
\eqref{eq:LemRA1}--\eqref{eq:LemRA3}
imply~\eqref{eq:LemRA4}.
Note that \eqref{eq:LemRA3} is assumed
in Theorems~\ref{th:sem_dif1}
and~\ref{th:sem_dif3}
as a part of the description
of case~(B) in Section~\ref{sec:sem_dif}.

\begin{proof}
First we prove by contradiction
that \eqref{eq:LemRA1} and~\eqref{eq:LemRA2}
imply~\eqref{eq:LemRA3}.
If this were not true,
there would exist a convex function $h$
on $(0,a)$ such that
\begin{equation}
\label{eq:LemRA5}
\int_{(0,u]}x\,h''(\dd x)<\infty
\end{equation}
and
\begin{equation}
\label{eq:LemRA6}
\lim_{x\downarrow0}h(x)=\infty
\end{equation}
(note that for a convex function
such a limit always exists
but may be infinite).
For~$\eps\in(0,u)$, integrating by parts, we get
\begin{equation*}
\int_{(\eps,u]}x\,h''(\dd x)
=uh'_{+}(u)-\eps h'_{+}(\eps)
-\int_{(\eps,u]}h'_{+}(x)\,\dd x.
\end{equation*}
Since $h$ is convex on $(0,a)$,
it is absolutely continuous
on compact intervals in $(0,a)$, hence
\begin{equation}
\label{eq:LemRA7}
\int_{(\eps,u]}x\,h''(\dd x)
=uh'_{+}(u)-\eps h'_{+}(\eps)
-h(u)+h(\eps).
\end{equation}
As $\eps\downarrow0$ we now get a contradiction
because the limit of the left-hand side of~\eqref{eq:LemRA7}
is finite due to~\eqref{eq:LemRA5},
while the limit of the right-hand side of~\eqref{eq:LemRA7}
equals~$\infty$ due to~\eqref{eq:LemRA6}
and $-\eps h'_{+}(\eps)\ge0$ for sufficiently small $\eps>0$.

It remains to prove the implication
\begin{equation*}
\text{\eqref{eq:LemRA1}--\eqref{eq:LemRA3}}
\Longrightarrow\text{\eqref{eq:LemRA4}},
\end{equation*}
which follows from Theorems~\ref{th:sem_dif1}
and~\ref{th:sem_dif3}, as it was observed above.
Such an argument is very indirect.
Let us now present a short direct argument.
Let $g$ satisfy \eqref{eq:LemRA1}--\eqref{eq:LemRA3}.
Clearly, \eqref{eq:LemRA7}~holds with $g$ instead of~$h$.
By~\eqref{eq:LemRA2} and~\eqref{eq:LemRA3},
there is a finite $\lim_{\eps\downarrow0}\eps g'_{+}(\eps)$.
Now using the integration by parts in a different way we obtain
\begin{equation}
\label{eq:LemRA8}
\int_{(\eps,u]}x(g'_{+}(x))^{2}\,\dd x
=\frac{(ug'_{+}(u))^{2}-(\eps g'_{+}(\eps))^{2}}2
-\int_{(\eps,u]}x^{2}g'_{+}(x)\,g''(\dd x).
\end{equation}
As $\eps\downarrow0$ the right-hand side,
hence also the left-hand side, of~\eqref{eq:LemRA8}
has a finite limit
(here~\eqref{eq:LemRA2} and the existence of a finite
$\lim_{\eps\downarrow0}\eps g'_{+}(\eps)$
are used).
Since $x(g'_{+}(x))^{2}$ is a nonnegative function,
statement~\eqref{eq:LemRA4}
follows by the monotone convergence
(or by Fatou's lemma).
\end{proof}

Theorems~\ref{th:sem_dif1} and~\ref{th:sem_dif2}
in the Brownian case
imply another result from real analysis,
which is again of interest in itself.

\begin{lemma}
\label{lem:LemRA2}
For some $a>0$, let $g\colon(0,a)\to\bbR$
be a convex or concave function
satisfying~\eqref{eq:LemRA3}.
Then, for any $u\in(0,a)$, it satisfies~\eqref{eq:LemRA2}.
\end{lemma}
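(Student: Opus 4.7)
The plan is to read Lemma~\ref{lem:LemRA2} as the analytic content of Theorems~\ref{th:sem_dif1} and~\ref{th:sem_dif2} in the Brownian setting $J=(0,\infty)$, $\mu\equiv0$, $\sigma\equiv1$, where the scale is trivial ($\rho\equiv1$, $s(x)=x$, $s(0)=0$) so that condition~\eqref{eq:sem_dif3} collapses to $\int_{(0,u]}x\,|\nu_g|(\dd x)<\infty$ and $\nu_g(\dd x)=\tfrac12 g''(\dd x)$. By replacing $g$ with $-g$ we may assume $g$ is convex on $(0,a)$; then $g''(\dd x)$ is a \emph{positive} measure, which is precisely the one-sided sign assumption~\eqref{eq:sem_dif6}. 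The task is therefore to set up a Brownian diffusion to which both theorems apply and read off \eqref{eq:LemRA2} from the conclusion.

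\textbf{Setup and extension.} Fix $u\in(0,a)$ and choose $w\in(u,a)$. Since convex functions on a compact interval in $(0,a)$ have bounded right derivative at $w$, I can extend $g$ to $\wt g\colon(0,\infty)\to\bbR$ by defining $\wt g(x)=g(x)$ for $x\in(0,w]$ and $\wt g(x)=g(w)+g'_+(w)(x-w)$ for $x\ge w$. Since $g'_+$ is nondecreasing on $(0,a)$, $\wt g'_+$ remains nondecreasing on $(0,\infty)$, so $\wt g$ is convex on $(0,\infty)$; in particular $\wt g$ is a difference of two convex functions, giving~\eqref{eq:sem_dif1}. Next, take $Y=B^{\tau_0^B}$ for a Brownian motion $B$ started at any $x_0\in(0,w)$. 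Then $Y$ exits $J=(0,\infty)$ at $l=0$ almost surely and does not exit at $r=\infty$. Moreover $\wt g(0)=g(0)$ is finite by~\eqref{eq:LemRA3}. Hence case~(B) of Section~\ref{sec:sem_dif} applies to $\wt g$ and $Y$, and the hypotheses of Theorem~\ref{th:sem_dif1} are satisfied.

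\textbf{Applying the two theorems.} Because $\wt g$ is convex on $(0,w)\supset(0,u)$, the measure $\wt g''|_{(0,u)}$ is positive, hence $\nu_{\wt g}|_{(0,u)}=\tfrac12 \wt g''|_{(0,u)}$ is a positive measure, verifying~\eqref{eq:sem_dif6} with the choice $a:=u$. Theorem~\ref{th:sem_dif2} then yields that $\wt g(Y)$ is a semimartingale on $[0,\infty)$. Invoking the necessity direction of Theorem~\ref{th:sem_dif1}, condition~\eqref{eq:sem_dif3} holds, which in the present parameters reads
\begin{equation*}
\int_{(0,u_0]} x\,|\wt g''|(\dd x)<\infty
\end{equation*}
for some $u_0\in(0,w)$. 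Since $\wt g$ coincides with $g$ on $(0,w)$, this is the same integral with $|g''|$ in place of $|\wt g''|$. Finally, as $g$ is convex on $(0,a)$, $|g''|$ is a locally finite measure on $(0,a)$, so the integral over $(u_0,u]$ is finite for every $u\in(0,a)$; splitting $\int_{(0,u]}=\int_{(0,u_0]}+\int_{(u_0,u]}$ then gives~\eqref{eq:LemRA2} for all $u\in(0,a)$.

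\textbf{Potential obstacle.} The argument is essentially a dictionary translation, so the only real care is in the extension step: $\wt g$ must inherit the difference-of-convex property globally on $(0,\infty)$ and must preserve the sign of $g''$ near $0$ while not interfering with case~(B). The linear extension at $w$ handles this cleanly and leaves $g''(\dd x)$ unchanged on $(0,w)$. Nothing else is subtle; the lemma is obtained by using Theorem~\ref{th:sem_dif2} to produce a semimartingale and then reading the necessary condition off Theorem~\ref{th:sem_dif1}.
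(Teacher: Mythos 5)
Your argument is correct, and it is precisely the ``indirect'' route that the paper itself flags as available but then deliberately avoids: right before the proof the authors note that Theorems~\ref{th:sem_dif1} and~\ref{th:sem_dif2} in the Brownian case imply Lemma~\ref{lem:LemRA2}, remark that ``the way of proving Lemma~\ref{lem:LemRA2} via Theorems~\ref{th:sem_dif1} and~\ref{th:sem_dif2} is of course very indirect'', and instead give a short direct real-analysis proof. Their argument normalises nothing probabilistic: it first shows $g'_{+}\in L^{1}_{\loc}(0+)$ using monotonicity of $g'_{+}$ and the finite limit~\eqref{eq:LemRA3}, then uses the integration-by-parts identity $\int_{(\eps,u]}x\,g''(\dd x)=ug'_{+}(u)-\eps g'_{+}(\eps)-g(u)+g(\eps)$, observes that the left-hand side is monotone in $\eps$ because $g''(\dd x)$ has constant sign, shows $\eps g'_{+}(\eps)\to0$ (any other limit would contradict $g'_{+}\in L^{1}_{\loc}(0+)$), and concludes by monotone convergence. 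Your version instead extends $g$ linearly past $w$, feeds $\wt g$ and $Y=B^{\tau^{B}_{0}}$ into case~(B), gets the semimartingale property from the one-sided sign condition~\eqref{eq:sem_dif6} via Theorem~\ref{th:sem_dif2}, and reads~\eqref{eq:LemRA2} off the necessity half of Theorem~\ref{th:sem_dif1}; all the reductions (the WLOG to convex, the linear extension preserving convexity and case~(B), the identification $\nu_{\wt g}=\tfrac12\wt g''$ when $\mu\equiv0$, and the final localisation from some $u_{0}$ to all $u$) check out. The trade-off is that your route imports the full weight of the machinery behind those theorems (local times of $Y$, the additive-functional criteria of Section~\ref{sec:Additive_F}, the Ray--Knight theorem), whereas the paper's proof is elementary, self-contained, and exhibits explicitly the analytic mechanism --- which is worth having, since Remark~\ref{rem:sem_dif2}(i) points out that a purely analytic proof of ``\eqref{eq:sem_dif6} implies~\eqref{eq:sem_dif3}'' is not straightforward in general, and the direct proof of Lemma~\ref{lem:LemRA2} is exactly such an argument in the convex Brownian special case.
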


Let us note that here assumption~\eqref{eq:LemRA3}
cannot be dropped: consider, for instance,
$g(x)=\frac1x$.
The way of proving Lemma~\ref{lem:LemRA2} via
Theorems~\ref{th:sem_dif1} and~\ref{th:sem_dif2}
is of course very indirect.
Let us present a direct proof.

\begin{proof}
In the first step let us establish that
$g'_{+}\in L^{1}_{\loc}(0+)$.
Since $g$ is convex or concave on $(0,a)$,
it is absolutely continuous
on compact intervals in $(0,a)$.
In particular, for $0<\eps<u<a$, we have
\begin{equation}
\label{eq:LemRA9}
\int_{(\eps,u]}g'_{+}(x)\,\dd x=g(u)-g(\eps).
\end{equation}
Again by convexity or concavity of~$g$,
$g'_{+}$~is monotone,
hence $g'_{+}$ is either nonnegative
or nonpositive in a sufficiently small
right neighborhood $(0,\delta)$ of zero.
Now $g'_{+}\in L^{1}_{\loc}(0+)$
follows from~\eqref{eq:LemRA9}
by letting $\eps\downarrow0$
and using the monotone convergence theorem
together with~\eqref{eq:LemRA3}.

Similarly to~\eqref{eq:LemRA7} we get
\begin{equation}
\label{eq:LemRA10}
\int_{(\eps,u]}x\,g''(\dd x)
=ug'_{+}(u)-\eps g'_{+}(\eps)-g(u)+g(\eps).
\end{equation}
Since $g$ is convex or concave, $g''(\dd x)$
is a positive or negative measure.
Therefore, the left-hand side,
hence also the right-hand side,
of~\eqref{eq:LemRA10}
has a finite or infinite limit
as $\eps\downarrow0$.
By~\eqref{eq:LemRA3},
there is a finite or infinite
$\lim_{\eps\downarrow0}\eps g'_{+}(\eps)$.
The latter limit can only be~$0$
(provided it exists)
because otherwise $g'_{+}\notin L^{1}_{\loc}(0+)$.
Hence
\begin{equation*}
\int_{(0,u]}x\,g''(\dd x)
=\lim_{\eps\downarrow0}
\int_{(\eps,u]}x\,g''(\dd x)
\text{ is finite}
\end{equation*}
(the equality holds by the monotone convergence).
We thus get~\eqref{eq:LemRA2}.
\end{proof}

\newpar{ Another Characterisation of the Semimartingale Property.}
\label{par:CharBC}
Let $B$ be a Brownian motion starting from $x_{0}>0$.
In the following we consider the stopped process
$B^{\tau^{B}_{0}}$ with
$\tau^{B}_{0}:=\inf\{t\ge0:B_{t}=0\}$
and discuss the conditions 
on a Borel function
$g\colon[0,\infty)\to\bbR$, 
under which the process
$g(B^{\tau^{B}_{0}})$ is a semimartingale.
Under the assumption that $g$ is continuous at~$0$
and the restricted 
function $g|_{(0,\infty)}$
is a difference of two convex functions,
a necessary and sufficient condition
is given in Theorem~\ref{th:sem_dif1} above.
Without any assumption,
a necessary and sufficient condition
is given in Theorem~5.9
in~\cite{CinlarJacodProtterSharpe:80}.
Here we enrich the picture in two ways:
firstly, we discuss the relations
between the elementary conditions
that form the necessary and sufficient condition
of Theorem~5.9 in~\cite{CinlarJacodProtterSharpe:80}
(namely, parts (ii) and~(iii) 
of~\cite[Th.~5.9]{CinlarJacodProtterSharpe:80}
imply part~(i) of that theorem);
secondly, we present another necessary and sufficient
condition for $g(B^{\tau^{B}_{0}})$
to be a semimartingale.

In order to formulate the result
we introduce several conditions:
\begin{gather}
\label{eq:CharBC1}
\text{the restriction }
g|_{(0,\infty)}
\text{ is a difference of two convex functions }
(0,\infty)\to\bbR,\\
\label{eq:CharBC2}
\text{there exists a finite limit }
g(0):=\lim_{x\downarrow0}g(x),\\
\label{eq:CharBC3}
x\in L^{1}_{\loc}(0+,|g''|(\dd x)),\\
\label{eq:CharBC4}
g=h_{1}-h_{2}
\text{ with }
h_{i}\colon[0,\infty)\to\bbR
\text{ convex and continuous at }
0,\quad i=1,2.
\end{gather}

\begin{remark}
\label{rem:CharBC1}
Let us note that condition~\eqref{eq:CharBC4}
is strictly stronger than
\eqref{eq:CharBC1} and~\eqref{eq:CharBC2}.
For instance, the functions $g$
constructed in Examples~\ref{ex:sem_ex1}
and~\ref{ex:sem_ex2} satisfy~\eqref{eq:CharBC1}
and~\eqref{eq:CharBC2},
but for them, $g(B^{\tau^{B}_{0}})$
is not a semimartingale, hence,
by Theorem~\ref{th:CharBC1} below,
\eqref{eq:CharBC4}~fails.
\end{remark}

\begin{theorem}
\label{th:CharBC1}
Let $g\colon[0,\infty)\to\bbR$ be a Borel function.
The following are equivalent:

(a) $g(B^{\tau^{B}_{0}})$ is a semimartingale;

(b) \eqref{eq:CharBC1} and~\eqref{eq:CharBC3} hold;

(c) \eqref{eq:CharBC4} holds.
\end{theorem}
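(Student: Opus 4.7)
I would prove the chain (a)$\Leftrightarrow$(b) and (b)$\Leftrightarrow$(c) separately. The first equivalence is a specialisation of Theorem~\ref{th:sem_dif1} to the Brownian set-up $J=(0,\infty)$, $\mu\equiv 0$, $\sigma\equiv 1$. Here $\rho\equiv 1$, $s(x)=x$, $s(0)=0$, $\nu_g(dy)=\tfrac12 g''(dy)$, and $|\nu_g|=\tfrac12|g''|$, so condition \eqref{eq:sem_dif3} of Theorem~\ref{th:sem_dif1} becomes literally \eqref{eq:CharBC3}. Before applying Theorem~\ref{th:sem_dif1} one has to do two things. First, one uses Remark~\ref{rem:ChSemPr1} to pass from the global semimartingale property of $g(B^{\tau^B_0})$ to the semimartingale property on the stochastic interval $[0,\tau^B_0)$, which is equivalent to \eqref{eq:CharBC1}. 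Second, since $g$ is only assumed Borel, one must justify that (a) forces $g(0+):=\lim_{x\downarrow 0}g(x)$ to exist and be finite: being c\`adl\`ag at $\tau^B_0$ is necessary for the semimartingale property, and Brownian motion, being continuous with $B_{\tau^B_0}=0$ and $B_t>0$ for $t<\tau^B_0$, covers a full right neighbourhood $(0,\varepsilon)$ of $0$ in every left neighbourhood of $\tau^B_0$ (by the intermediate value theorem), so $g(B_t)$ has a left limit at $\tau^B_0$ iff $g(0+)$ exists, and the limit must be finite for the process to remain $\bbR$-valued. Once $g(0+)$ is finite, write
\begin{equation*}
g(B^{\tau^B_0}_t)=\tilde g(B^{\tau^B_0}_t)+(g(0)-g(0+))\mathbf 1_{\{t\ge\tau^B_0\}},
\end{equation*}
with $\tilde g=g$ on $(0,\infty)$ and $\tilde g(0):=g(0+)$. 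The second summand is a pure-jump finite-variation process, hence a semimartingale, so $g(B^{\tau^B_0})$ and $\tilde g(B^{\tau^B_0})$ share the semimartingale status, and $\tilde g$ now falls squarely into case~(B) of Section~\ref{sec:sem_dif}, where Theorem~\ref{th:sem_dif1} yields (a)$\Leftrightarrow$(b).

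For (b)$\Leftrightarrow$(c), which is purely analytic, I would argue as follows. \emph{(c)$\Rightarrow$(b):} restricting $h_1-h_2$ to $(0,\infty)$ immediately gives \eqref{eq:CharBC1}; the bound $|g''|(dx)\le h_1''(dx)+h_2''(dx)$ on $(0,\infty)$, combined with Lemma~\ref{lem:LemRA2} applied to each convex $h_i$ (which has the finite limit $h_i(0)$ at zero), yields $\int_{(0,u]} x\,|g''|(dx)\le\sum_i\int_{(0,u]} x\,h_i''(dx)<\infty$, i.e.\ \eqref{eq:CharBC3}. \emph{(b)$\Rightarrow$(c):} Lemma~\ref{lem:LemRA1} first supplies a finite $g(0+)$. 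Now take the Jordan decomposition $g''=(g'')^+-(g'')^-$ on $(0,\infty)$ and let $h_1,h_2$ be convex antiderivatives on $(0,\infty)$ with $h_i''=(g'')^\pm$, adjusting the free affine parameters so that $h_1-h_2=g$ on $(0,\infty)$. Because $(g'')^+$ and $(g'')^-$ are mutually singular, $|g''|=(g'')^++(g'')^-$, so \eqref{eq:CharBC3} yields $\int_{(0,u]} x\,(g'')^\pm(dx)<\infty$; a second application of Lemma~\ref{lem:LemRA1} (to each $h_i$) shows that $h_1(0+)$ and $h_2(0+)$ are finite. Extending each $h_i$ continuously to~$0$ preserves convexity on $[0,\infty)$, delivering \eqref{eq:CharBC4}.

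The main obstacle is the preliminary claim in the first step that (a) forces $g(0+)$ to exist and be finite, since Theorem~\ref{th:sem_dif1} is set up under continuity of $g$ at the boundary while Theorem~\ref{th:CharBC1} only assumes $g$ Borel; the path-wise argument via the intermediate value theorem near $\tau^B_0$ is essential but not fully automatic. Once this is established, the remainder consists of the direct specialisation of Theorem~\ref{th:sem_dif1} and two short applications each of Lemmas~\ref{lem:LemRA1} and~\ref{lem:LemRA2}, with the Jordan decomposition of~$g''$ doing the constructive work for (b)$\Rightarrow$(c).
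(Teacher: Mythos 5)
Your proof is correct and uses the same core ingredients as the paper, but it assembles the probabilistic equivalence (a)$\Leftrightarrow$(b) differently. The paper simply cites \cite[Th.~5.9]{CinlarJacodProtterSharpe:80}, which is stated for arbitrary Borel $g$ and directly yields that (a) is equivalent to \eqref{eq:CharBC1}--\eqref{eq:CharBC3}; the only probabilistic work the paper does itself is (c)$\Rightarrow$(a) via Corollary~\ref{cor:sem_dif1}. You instead derive (a)$\Leftrightarrow$(b) from Theorem~\ref{th:sem_dif1}, which forces you to bridge the gap between ``$g$ Borel'' and the hypotheses of case~(B): your intermediate-value argument for the existence of a finite $g(0+)$ and the pure-jump correction $(g(0)-g(0+))\mathbf 1_{\{t\ge\tau^B_0\}}$ are both sound, and they buy a treatment that stays almost entirely inside the paper's own machinery. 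The one place where you still lean on \cite{CinlarJacodProtterSharpe:80} is the step ``(a) implies \eqref{eq:CharBC1}'': Remark~\ref{rem:ChSemPr1} as stated equates \eqref{eq:sem_dif1} with $g(Y)$ being a \emph{continuous} semimartingale on $[0,\zeta)$, whereas for a merely Borel $g$ you only know a priori that the restriction to $[0,\tau^B_0)$ is a c\`adl\`ag semimartingale; the underlying results (Theorems~5.5 and~5.6 of \cite{CinlarJacodProtterSharpe:80}) do cover the Borel case, so this is a matter of citing them rather than the remark, not a mathematical gap. For the analytic half, your (b)$\Rightarrow$(c) is essentially the paper's argument (Lemma~\ref{lem:LemRA1}, Jordan decomposition of $g''$, convex antiderivatives with finite limits at~$0$ --- you obtain the finiteness by a second appeal to Lemma~\ref{lem:LemRA1} where the paper uses a short Fubini computation, which amounts to the same estimate), and your (c)$\Rightarrow$(b) via Lemma~\ref{lem:LemRA2} is the alternative the paper itself mentions parenthetically, just routed through (b) instead of directly to~(a).
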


\begin{proof}
If \eqref{eq:CharBC4} holds, then,
by Corollary~\ref{cor:sem_dif1},
$h_{i}(B^{\tau^{B}_{0}})$
are semimartingales, $i=1,2$
(alternatively, one can use Lemma~\ref{lem:LemRA2} here).
Thus, $\text{(c)}\Rightarrow\text{(a)}$.
By~\cite[Th.~5.9]{CinlarJacodProtterSharpe:80},
(a)~is equivalent to \eqref{eq:CharBC1}--\eqref{eq:CharBC3}.
In particular, $\text{(a)}\Rightarrow\text{(b)}$.

It remains to prove that $\text{(b)}\Rightarrow\text{(c)}$.
Assume~\eqref{eq:CharBC1} and~\eqref{eq:CharBC3}.
By Lemma~\ref{lem:LemRA1}, \eqref{eq:CharBC2}~holds. Let
\begin{equation*}
g''(\dd x)=\nu_{1}(\dd x)-\nu_{2}(\dd x)
\end{equation*}
be the Jordan decomposition of the locally finite signed
measure $g''(\dd x)$ on $(0,\infty)$,
that is $\nu_{i}(\dd x)$ are locally finite
positive measures on $(0,\infty)$
such that $\nu_{1}\perp\nu_{2}$.
In particular, we have
\begin{equation*}
\nu_{1}(\dd x)+\nu_{2}(\dd x)=|g''|(\dd x),
\end{equation*}
hence
\begin{equation}
\label{eq:CharBC5}
x\in L^{1}_{\loc}(0+,\nu_{i}),\quad i=1,2.
\end{equation}
For $i=1,2$, define the functions
\begin{equation*}
k_{i}(x)=\begin{cases}
\nu_{i}((1,x])&\text{if }x\in[1,\infty),\\
-\nu_{i}((x,1])&\text{if }x\in(0,1).
\end{cases}
\end{equation*}
Let us prove that~\eqref{eq:CharBC4}
is satisfied with functions $h_{i}=H_{i}$,
where
\begin{equation*}
H_{i}(x)=\int_{1}^{x}k_{i}(y)\,\dd y+a_{i}x+b_{i},
\quad x\in[0,\infty),\quad i=1,2,
\end{equation*}
for a suitable choice of constants $a_{i},b_{i}$.
Since $k_{i}$ are nondecreasing and right-continuous
and $(H_{i})'_{+}=k_{i}+a_{i}$, we have that
$H_{i}$ are convex functions on $(0,\infty)$.
By construction it holds
\begin{equation*}
(H_{1}-H_{2})'_{+}(x)
=g'_{+}(x)-g'_{+}(1)+a_{1}-a_{2},
\quad x\in(0,\infty).
\end{equation*}
Choosing $a_{1}$ and $a_{2}$ so that
$a_{1}-a_{2}=g'_{+}(1)$,
$b_{1}$ and $b_{2}$ so that
$(H_{1}-H_{2})(1)=g(1)$,
we obtain that $g=H_{1}-H_{2}$ on $(0,\infty)$.
It remains to prove that
$\lim_{x\downarrow0}H_{i}(x)<\infty$, $i=1,2$.
To this end, it is enough to prove that
$\int_{0}^{1}\nu_{i}((y,1])\,\dd y<\infty$.
For $i=1,2$, we have
\begin{equation*}
\int_{0}^{1}\nu_{i}((y,1])\,\dd y
=\int_{(0,1]}\int_{(0,1]}I(y<x\le1)\,\nu_{i}(\dd x)\,\dd y
=\int_{(0,1]}x\,\nu_{i}(\dd x)<\infty
\end{equation*}
by~\eqref{eq:CharBC5}.
This concludes the proof.
\end{proof}

\section{Finiteness of Additive Functionals of Diffusion Processes}
\label{sec:Additive_F}
In this section we study
the finiteness of the process
\begin{equation}
\label{eq:conv loc time nu}
\int_J L^y_t(Y)\,\nu(\dd y),\quad t\in[0,\zeta],
\end{equation}
where 
$\nu$
is an arbitrary positive measure defined on the Borel 
$\sigma$-field
$\cB(J)$
(setting and notations in Section~\ref{sec:set} apply),
and $(L_{t}^{y}(Y);t\in[0,\zeta),y\in J)$
is an a.s. continuous in $t$ and c\`adl\`ag in $y$
version of the local time of~$Y$
(in fact, it will be even a.s. jointly continuous
in $(t,y)$; see~\cite[Prop.~A.1]{MijatovicUrusov:12a}).
The characterisation of the finiteness
of the additive functional
given in~\eqref{eq:conv loc time nu}
plays a key role in the proofs of the results of Section~\ref{sec:sem_dif}.
The occupation times formula
(see \cite[Ch.~VI, Cor.~1.6]{RevuzYor:99})
implies that this question has been answered in~\cite{MijatovicUrusov:12c}
in the case the measure 
$\nu$
is absolutely continuous with respect to the 
Lebesgue measure
$\nu_L$.
In this section we 
give a deterministic characterisation of the finiteness
of the additive functional in~\eqref{eq:conv loc time nu}
for a general positive (possibly non-locally finite) measure
$\nu$
on the interval~$J$.

We proceed in two steps.
First we reduce the study of the finiteness
of~\eqref{eq:conv loc time nu}
in general to the question of the convergence of the integral
\begin{equation}
\label{eq:loc time at exit}
\int_J L^y_\zeta(Y)\,\nu(\dd y),
\end{equation}
where the measure $\nu$ is now locally finite on~$J$.
In the second step we formulate the answer to 
the latter problem in terms of a
deterministic integrability criterion
involving the scale function
$s$ and its derivative $\rho$,
given in \eqref{eq:set4}--\eqref{eq:set5},
and the measure~$\nu$.

Let us consider a general positive measure $\nu$
on~$J$. With $B_{\eps}(x):=(x-\eps,x+\eps)$ we set
\begin{equation*}
D^{\nu}:=\{l,r\}\cup\{x\in J:
\forall\eps>0\text{ it holds }\nu(B_{\eps}(x))=\infty\},
\end{equation*}
i.e. $D^{\nu}$ is the set of points in~$J$,
where the local finiteness of $\nu$ fails,
augmented with $\{l,r\}$.
Clearly, $D^{\nu}$ is closed in~$\ol J$.
For a closed subset $E$ in~$\ol J$
and $a,b\in\ol J$, let us define the stopping times
\begin{align*}
\tau^{Y}_{E}&:=\inf\{t\in[0,\infty):Y_{t}\in E\}
\quad(\inf\emptyset:=\infty),\\
\tau^{Y}_{a}&:=\tau^{Y}_{\{a\}},\\
\tau^{Y}_{a,b}&:=\tau^{Y}_{a}\wedge\tau^{Y}_{b}.
\end{align*}
We start with the following result.

\begin{theorem}
\label{thm:Local_Time_nu}
$\PP$-a.s. we have:
\begin{align}
\label{eq:before_nu_explodes}
\int_J L^y_t(Y)\,\nu(\dd y)&<\infty,\quad t\in[0,\tau^Y_{D^\nu}),\\
\label{eq:after_nu_explodes}
\int_J L^y_t(Y)\,\nu(\dd y)&=\infty,\quad t\in(\tau^Y_{D^\nu},\zeta].
\end{align}
\end{theorem}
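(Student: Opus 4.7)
The plan is to prove the two claims separately: the finiteness~\eqref{eq:before_nu_explodes} by a straightforward compactness argument, and the infiniteness~\eqref{eq:after_nu_explodes} via the strong Markov property at $\tau^{Y}_{D^{\nu}}$ combined with the strict positivity of local time at the starting point of a regular diffusion.

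For~\eqref{eq:before_nu_explodes}, I would fix $\omega$ in the full-measure event on which $(t,y)\mapsto L^{y}_{t}(Y)(\omega)$ is jointly continuous. For $t<\tau^{Y}_{D^{\nu}}(\omega)$, the compact set $K_{t}:=\{Y_{s}(\omega):0\le s\le t\}$ lies in $J\setminus D^{\nu}$, because $D^{\nu}$ is closed in $\ol J$, contains $\{l,r\}$, and is avoided strictly before $\tau^{Y}_{D^{\nu}}$. The function $y\mapsto L^{y}_{t}(Y)(\omega)$ is supported in $K_{t}$ and continuous, hence bounded. Since every point of $J\setminus D^{\nu}$ has an open neighborhood of finite $\nu$-mass, the compact set $K_{t}$ can be covered by a relatively compact open neighborhood $K'\subset J\setminus D^{\nu}$ with $\nu(K')<\infty$, and the bound $\int_{J}L^{y}_{t}(Y)\,\nu(\dd y)\le(\sup_{y\in K'}L^{y}_{t}(Y)(\omega))\,\nu(K')<\infty$ follows.

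For~\eqref{eq:after_nu_explodes}, one may restrict to the event $\Omega_{1}:=\{\tau^{Y}_{D^{\nu}}<\zeta\}$, since the interval $(\tau^{Y}_{D^{\nu}},\zeta]$ is empty otherwise. Write $\tau:=\tau^{Y}_{D^{\nu}}$ and $x^{*}:=Y_{\tau}\in D^{\nu}\cap J$ on $\Omega_{1}$. By the strong Markov property, conditionally on $\cF_{\tau}$ the shifted process $(Y_{\tau+s})_{s\ge 0}$ is a diffusion with the same coefficients starting at $x^{*}$. A standard fact for regular one-dimensional diffusions (obtained e.g. by transferring the corresponding property of Brownian motion via the scale transformation $s$ and a random time change) yields strict positivity of the local time at the starting point: for every $\delta>0$, $\PP$-a.s. on $\{\tau+\delta\le\zeta\}$ one has $L^{x^{*}}_{\tau+\delta}(Y)-L^{x^{*}}_{\tau}(Y)>0$. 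By joint continuity of local time in $(t,y)$, there is a (random) $\eps>0$ with $B_{\eps}(x^{*})\subset J$ such that $L^{y}_{\tau+\delta}(Y)-L^{y}_{\tau}(Y)\ge\frac{1}{2}\bigl(L^{x^{*}}_{\tau+\delta}(Y)-L^{x^{*}}_{\tau}(Y)\bigr)$ for all $y\in B_{\eps}(x^{*})$. Since $x^{*}\in D^{\nu}$, it holds $\nu(B_{\eps}(x^{*}))=\infty$, so that $\int_{J}L^{y}_{\tau+\delta}(Y)\,\nu(\dd y)=\infty$. Taking a countable sequence $\delta_{n}\downarrow 0$ and invoking monotonicity of $t\mapsto L^{y}_{t}(Y)$ extends this to every $t\in(\tau,\zeta]$.

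The principal technical point is the strict positivity of local time at the starting point for $Y$ under the Engelbert-Schmidt conditions; once this is in hand, everything else is a compactness-and-continuity routine or a direct application of the strong Markov property. The apparent delicacy that the neighborhood $B_{\eps}(x^{*})$ is random is harmless, because the argument is carried out $\omega$-by-$\omega$ on a full-measure event.
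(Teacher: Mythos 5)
Your proof is correct and takes essentially the same route as the paper: finiteness on $[0,\tau^Y_{D^\nu})$ follows because $y\mapsto L^y_t(Y)$ has compact support inside the region where $\nu$ is locally finite, and infiniteness afterwards follows because the local time is strictly positive on a neighbourhood of the hit point of $D^\nu$, every neighbourhood of which has infinite $\nu$-mass. The only (immaterial) difference is how that positivity is sourced: the paper applies \cite[Th.~2.7]{ChernyEngelbert:05} at the fixed levels $\alpha,\beta$ flanking $x_0$, whereas you rederive it from the strong Markov property at $\tau^Y_{D^\nu}$ together with the immediate increase of local time at the starting point and joint continuity in $(t,y)$.
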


\begin{remark}
\label{rem:Local_Time_nu}
Once Theorem~\ref{thm:Local_Time_nu} 
is established,
it remains to study the convergence of the integral
\begin{equation*}
\int_J L^y_{\tau^Y_{D^\nu}}(Y)\,\nu(\dd y).
\end{equation*}
If $x_0\in {D^\nu}$, then there is nothing to study here
because $\tau^Y_{D^\nu}\equiv0$
and $\int_J L^y_0(Y)\,\nu(\dd y)=0$.
Assume now that $x_0\notin D^\nu$ and define
\begin{equation}
\label{eq:add fun al be}
\alpha=\sup([l,x_0)\cap D^\nu)
\quad\text{and}\quad
\beta=\inf((x_0,r]\cap D^\nu).
\end{equation}
Then we have
$\tau^Y_{D^\nu}=\tau^Y_{\alpha,\beta}$.
Now if we consider $I:=(\alpha,\beta)$ as a new state space
for~$Y$,
then $\tau^Y_{\alpha,\beta}$ will be the new exit time,
and we clearly have that $\nu$
is locally finite on~$I$.
This concludes the reduction of the study of
the finiteness of the process
in~\eqref{eq:conv loc time nu},
with a general positive measure~$\nu$,
to the question of the convergence of the integral
given in~\eqref{eq:loc time at exit}
with measure~$\nu$, which is now locally finite on $J$. 
\end{remark}

\begin{proof}[Proof of Theorem~\ref{thm:Local_Time_nu}]
If $x_{0}\in D^{\nu}$, then there is nothing to prove
in~\eqref{eq:before_nu_explodes}. Let $x_{0}\notin D^{\nu}$.
A.s. on $\{t<\tau^Y_{D^\nu}\}$
the following holds:
$[\inf_{u\le t}Y_{u},\sup_{u\le t}Y_{u}]\subset(\alpha,\beta)$
with $\alpha$ and $\beta$ from~\eqref{eq:add fun al be},
hence
$\nu\left([\inf_{u\le t}Y_u,\sup_{u\le t}Y_u]\right)<\infty$,
and the function $y\mapsto L^y_t(Y)$ is bounded
as a c\`adl\`ag function with a compact support.
Thus, statement~\eqref{eq:before_nu_explodes} follows.

As for~\eqref{eq:after_nu_explodes},
let us first assume that $x_{0}\notin D^{\nu}$.
Then $\tau^{Y}_{D^{\nu}}=\tau^{Y}_{\alpha,\beta}$,
hence $\{\tau^{Y}_{D^{\nu}}<t<\zeta\}
=\{\tau^{Y}_{\alpha}<t<\zeta\}
\cup\{\tau^{Y}_{\beta}<t<\zeta\}$.
If $\PP(\tau^{Y}_{\alpha}<t<\zeta)>0$
(in particular, this means that $\alpha>l$),
then~\eqref{eq:after_nu_explodes} holds
a.s. on $\{\tau^{Y}_{\alpha}<t<\zeta\}$
because $\alpha\in J\cap D^{\nu}$ and,
by~\cite[Th.~2.7]{ChernyEngelbert:05},
the function $y\mapsto L_{t}^{y}(Y)$
is strictly positive in some neighbourhood
of $\alpha$ a.s. on $\{\tau^{Y}_{\alpha}<t<\zeta\}$.
Similarly, \eqref{eq:after_nu_explodes} holds
a.s. on $\{\tau^{Y}_{\beta}<t<\zeta\}$.
In the case $x_{0}\in D^{\nu}$
statement~\eqref{eq:after_nu_explodes}
again follows
from~\cite[Th.~2.7]{ChernyEngelbert:05}
by the same reasoning.
\end{proof}

It now remains to study the convergence of the integral
in~\eqref{eq:loc time at exit}
under the assumption that the measure
$\nu$ on $J$ is locally finite.
The answer depends on the behaviour of~$Y$.
Theorems~\ref{thm:Loc_time_int_nu_exit_A}
and~\ref{thm:Loc_time_int_nu_exit_BC} below
examine the cases
$\PP(A)=1$ and $\PP(B_r\cup B_l\cup C_r\cup C_l)=1$
separately
(the events $A$, $B_{r}$, $B_{l}$, $C_{r}$, $C_{l}$
are defined in Appendix~\ref{app:Friff_Prop};
see Propositions~\ref{prop:set2} and~\ref{prop:set3}
for the description of these cases).

\begin{theorem}
\label{thm:Loc_time_int_nu_exit_A}
Let $\nu$
be a locally finite positive measure on the interval
$J=(l,r)$.
Assume that $s(r)=\infty$ and $s(l)=-\infty$.
Then $\PP$-a.s. we have
\begin{equation}
\label{eq:Loc_time_diff_infinite}
L^y_\zeta(Y)=\infty
\text{ for every }y\in J,
\end{equation}
hence
$\int_J L^y_\zeta(Y)\,\nu(\dd y)=\infty$
$\PP$-a.s.
whenever $\nu$ is a non-zero measure
(i.e.~$\nu(J)>0$).
\end{theorem}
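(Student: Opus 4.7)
The plan is to pass to natural scale and then to Brownian motion via the Dambis-Dubins-Schwarz time-change, reducing the assertion to the standard fact that a linear Brownian motion has infinite local time at every level a.s. First I would set $X_t := s(Y_t)$. Since $s\in C^1(J)$ with absolutely continuous derivative satisfying $(\mu s' + \tfrac12\sigma^2 s'')(y) = 0$ for $\nu_L$-a.e.\ $y \in J$, the It\^o-Tanaka formula yields that $X$ is a continuous local martingale on $[0,\zeta)$ with $\langle X\rangle_t = \int_0^t (\rho\sigma)^2(Y_u)\,\dd u$. The hypothesis $s(l) = -\infty = -s(r)$ places us in Feller's case~(A), so $\zeta = \infty$ $\PP$-a.s.\ and $Y$ is recurrent on $J$; equivalently $\limsup_{t\to\infty} X_t = \infty$ and $\liminf_{t\to\infty} X_t = -\infty$ $\PP$-a.s., hence $\langle X\rangle_\infty = \infty$ $\PP$-a.s.

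By Dambis-Dubins-Schwarz, on a possibly extended probability space there is a standard Brownian motion $\beta$ with $\beta_0 = 0$ such that $X_t = s(x_0) + \beta_{\langle X\rangle_t}$. Using the invariance of semimartingale local time under translation and its transformation under continuous non-decreasing time changes,
\begin{equation*}
L^a_\infty(X) = L^{a - s(x_0)}_\infty(\beta),\qquad a\in\bbR,
\end{equation*}
while the local time change-of-variables formula for the $C^1$-diffeomorphism $s$ (see \cite[Ch.~VI, Ex.~1.23]{RevuzYor:99}) gives $L^{s(y)}_t(X) = \rho(y)\,L^y_t(Y)$ for every $y \in J$ and $t \ge 0$, with $\rho(y) > 0$. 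The theorem is therefore reduced to showing that $\PP$-a.s. $L^b_\infty(\beta) = \infty$ for every $b \in \bbR$ simultaneously.

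For each fixed $b$, Tanaka's formula reads $L^b_t(\beta) = |\beta_t - b| - |b| - M^b_t$, where $M^b_t := \int_0^t \sgn(\beta_s - b)\,\dd\beta_s$ satisfies $\langle M^b\rangle_t = t$; by L\'evy's characterisation, $M^b$ is a standard Brownian motion and so $\liminf_{t\to\infty} M^b_t = -\infty$ $\PP$-a.s. Combining this with $|\beta_t - b| \ge 0$ and the monotonicity in $t$ of $L^b(\beta)$ yields $L^b_\infty(\beta) = \infty$ $\PP$-a.s. Applying this to every rational $b$ produces a full-probability event on which $L^q_\infty(\beta) = \infty$ for all $q \in \bbQ$; the joint continuity of $(t,b)\mapsto L^b_t(\beta)$ then propagates infiniteness from rationals to all of $\bbR$ by choosing rational $q_n \to b_0$ tuned to the modulus of continuity of $L^{\cdot}_{T_n}(\beta)$ at $b_0$ on a sequence of times $T_n$ with $L^{q_n}_{T_n}(\beta) \to \infty$. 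This establishes $L^y_\zeta(Y) \equiv \infty$ on $J$ $\PP$-a.s., and the divergence $\int_J L^y_\zeta(Y)\,\nu(\dd y) = \infty$ for a non-zero positive measure $\nu$ is then immediate.

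The principal obstacle is the simultaneity in $b$ (or equivalently $y$) in the last step, which rests on joint continuity of the local time (cf.\ \cite[Prop.~A.1]{MijatovicUrusov:12a}). An alternative route, bypassing this continuity argument, is via the second Ray-Knight theorem: it describes the random field $(L^b_\tau(\beta))_{b\in\bbR}$ at an inverse local time $\tau$ in terms of squared Bessel processes and allows the infiniteness to be read off directly as one sends $\tau\to\infty$.
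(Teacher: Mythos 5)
Your reduction is exactly the paper's: pass to the natural scale $s(Y)$, apply Dambis--Dubins--Schwarz to write $s(Y_t)=B_{\langle s(Y),s(Y)\rangle_t}$ for a Brownian motion $B$ started at $s(x_0)$, observe that $s(l)=-\infty$ and $s(r)=\infty$ force $\langle s(Y),s(Y)\rangle_\zeta=\infty$ a.s., and combine the space-change identity $L^{s(y)}_t(s(Y))=\rho(y)L^y_t(Y)$ with the time-change identity for local times. At that point the paper simply cites \cite[Ch.~VI, \S~2]{RevuzYor:99} for the fact that $\PP$-a.s. $L^z_\infty(B)=\infty$ for every $z\in\bbR$, whereas you attempt to prove it.

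Your Tanaka argument correctly gives $L^b_\infty(\beta)=\infty$ a.s. for each \emph{fixed} $b$, but your primary route to the simultaneous statement does not close. You pick rationals $q_n\to b_0$ and times $T_n$ with $L^{q_n}_{T_n}\to\infty$ and hope to control $|L^{q_n}_{T_n}-L^{b_0}_{T_n}|$; however, how close $q_n$ must be to $b_0$ depends on the modulus of continuity of $a\mapsto L^a_{T_n}$ near $b_0$, which degrades as $T_n$ grows, while $T_n$ is itself chosen as a function of $q_n$ --- the choice is circular, and in general $\sup_t\lim_{q\to b_0}L^q_t$ need not dominate $\lim_{q\to b_0}\sup_tL^q_t$ for a jointly continuous field, so joint continuity alone cannot transfer unboundedness from rationals to all levels. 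You flag this obstacle yourself, and your fallback is viable: either cite the classical statement, as the paper does, or carry out the second Ray--Knight argument --- $(L^a_{\tau_u}(\beta))_{a\ge0}$ is a squared Bessel process of dimension $0$ started from $u$, a scaling argument gives $\PP(\inf_{0\le a\le K}L^a_{\tau_u}\le N)\to0$ as $u\to\infty$ for fixed $K,N$, and Borel--Cantelli along a subsequence plus monotonicity of $t\mapsto L^a_t$ yields $\inf_{|a|\le K}L^a_\infty=\infty$ a.s.\ for every $K$. With that step repaired (or replaced by the citation), your proof is complete and coincides with the paper's.
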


Let us remark that the assumption
$s(r)=\infty$ and $s(l)=-\infty$
of Theorem~\ref{thm:Loc_time_int_nu_exit_A}
is equivalent to $\PP(A)=1$
(see Propositions~\ref{prop:set2} and~\ref{prop:set3}).
In particular,
in Theorem~\ref{thm:Loc_time_int_nu_exit_A}
we have $\zeta=\infty$~$\PP$-a.s.

The study of the remaining case
$\PP(B_{r}\cup B_{l}\cup C_{r}\cup C_{l})=1$
consists of the investigation
of the convergence of~\eqref{eq:loc time at exit}
on the event $\{\lim_{t\uparrow\zeta}Y_t=l\}$
and on the event $\{\lim_{t\uparrow\zeta}Y_t=r\}$.
In the following theorem we investigate the convergence
of~\eqref{eq:loc time at exit}
on the event $\{\lim_{t\uparrow\zeta}Y_t=l\}$
(in particular, we need to assume $s(l)>-\infty$,
which is, by Proposition~\ref{prop:set3},
equivalent to $\PP(\lim_{t\uparrow\zeta}Y_t=l)>0$).

\begin{theorem}
\label{thm:Loc_time_int_nu_exit_BC}
Let $\nu$
be a locally finite positive measure on the interval
$J=(l,r)$.
Assume that $s(l)>-\infty$.

(i) If
\begin{equation*}
\frac{s-s(l)}{\rho}\in L^1_\loc(l+,\nu),
\end{equation*}
then
\begin{equation*}
\int_J L^y_\zeta(Y)\,\nu(\dd y) <\infty\quad\PP\text{-a.s. on }\left\{\lim_{t\uparrow\zeta}Y_t=l\right\}.
\end{equation*}

(ii) If
\begin{equation*}
\frac{s-s(l)}{\rho}\notin L^1_\loc(l+,\nu),
\end{equation*}
then
\begin{equation*}
\int_J L^y_\zeta(Y)\,\nu(\dd y) =\infty\quad\PP\text{-a.s. on }\left\{\lim_{t\uparrow\zeta}Y_t=l\right\}.
\end{equation*}
\end{theorem}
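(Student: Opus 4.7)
The plan is to reduce, via a scale transformation together with a Dambis--Dubins--Schwarz time change, to a statement about the space-indexed local time of a Brownian motion at the first hitting time of zero; the first Ray--Knight theorem then identifies this local-time process with a squared Bessel process of dimension~$2$ starting from~$0$, after which the desired dichotomy reduces to a Fubini computation in one direction and a Gaussian zero-one law in the other. First set $X_{t} := s(Y_{t}) - s(l)$, a nonnegative continuous local martingale on $[0, \zeta)$ with $X_{0} = a := s(x_{0}) - s(l) > 0$, so that $\{\lim_{t \uparrow \zeta} Y_{t} = l\}$ coincides with $\{\lim_{t \uparrow \zeta} X_{t} = 0\}$. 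The local-time identity from Example~\ref{ex:1} yields $L^{s(y) - s(l)}_{t}(X) = \rho(y)\, L^{y}_{t}(Y)$, and the substitution $z = s(y) - s(l)$ gives
\[
\int_{J} L^{y}_{\zeta}(Y)\, \nu(\dd y) = \int_{(0,\, s(r) - s(l))} L^{z}_{\zeta}(X)\, \tilde{\nu}(\dd z),
\]
where $\tilde{\nu}$ is the image of $\rho^{-1}\nu$ under $y \mapsto s(y) - s(l)$. The same change of variable converts the hypothesis $(s - s(l))/\rho \in L^{1}_{\loc}(l+, \nu)$ into $z \in L^{1}_{\loc}(0+, \tilde{\nu})$, i.e.\ into $\int_{0}^{\delta} z\, \tilde{\nu}(\dd z) < \infty$ (in case~(i)) or $=\infty$ (in case~(ii)) for some, equivalently any, small $\delta \in (0, a)$.

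Next, by Dambis--Dubins--Schwarz write $X_{t} = \beta_{\langle X \rangle_{t}}$ for a Brownian motion $\beta$ starting at $a$. On $\{Y_{t} \to l\}$, $\langle X \rangle_{\zeta} = T^{\beta}_{0} := \inf\{u \geq 0 : \beta_{u} = 0\} < \infty$, and by the invariance of local time under time change, $L^{z}_{\zeta}(X) = L^{z}_{T^{\beta}_{0}}(\beta)$ for $z \in (0, a)$. Since $\tilde{\nu}$ is locally finite away from $0$ and the random support of $z \mapsto L^{z}_{T^{\beta}_{0}}(\beta)$ is contained in $[0, \sup_{u \leq T^{\beta}_{0}} \beta_{u}]$ (a.s.\ compact), the finiteness of the integral is controlled solely by its behaviour near $z = 0$, i.e.\ by that of $\int_{0}^{\delta} L^{z}_{T^{\beta}_{0}}(\beta)\, \tilde{\nu}(\dd z)$. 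The first Ray--Knight theorem, applied after the reflection $\beta \mapsto a - \beta$, identifies $(L^{z}_{T^{\beta}_{0}}(\beta))_{z \in [0, a]}$ with a squared Bessel process of dimension~$2$ starting from~$0$ at $z = 0$; realize it concretely as $Z_{z} = (B^{(1)}_{z})^{2} + (B^{(2)}_{z})^{2}$ for two independent standard Brownian motions $B^{(1)}, B^{(2)}$ from $0$.

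For part~(i), Fubini together with $\EE[Z_{z}] = 2z$ gives
\[
\EE\!\left[ \int_{0}^{\delta} Z_{z}\, \tilde{\nu}(\dd z) \right] = 2 \int_{0}^{\delta} z\, \tilde{\nu}(\dd z) < \infty,
\]
so the integral is a.s.\ finite on $\{Y_{t} \to l\}$. For part~(ii), use the pointwise bound $Z_{z} \geq (B^{(1)}_{z})^{2}$; then $\int_{0}^{\delta} (B^{(1)}_{z})^{2}\, \tilde{\nu}(\dd z)$ is the squared $L^{2}(\tilde{\nu})$-norm of the centered Gaussian process $B^{(1)}$ restricted to $[0, \delta]$. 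By the zero-one law for linear subspaces under Gaussian measures, combined with Fernique's theorem, this norm is either a.s.\ finite (in which case it has finite second moment) or a.s.\ equal to $+\infty$; since its second moment equals $\int_{0}^{\delta} z\, \tilde{\nu}(\dd z) = \infty$ by hypothesis, the second alternative must hold, so $\int_{0}^{\delta} Z_{z}\, \tilde{\nu}(\dd z) = \infty$ almost surely, whence $\int_{J} L^{y}_{\zeta}(Y)\, \nu(\dd y) = \infty$ almost surely on $\{Y_{t} \to l\}$. The main obstacle is the divergence direction~(ii): a Fubini computation only yields infinite expectation, which by itself does not force a.s.\ divergence, and it is precisely the Ray--Knight identification into BESQ$(2)$ that casts the integrand into a clean Gaussian-quadratic form where the sharp Gaussian dichotomy applies.
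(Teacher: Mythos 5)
Your argument is correct and, up to the last step, coincides with the paper's: pass to natural scale, time-change via Dambis--Dubins--Schwarz to a Brownian motion stopped at a level, and use the first Ray--Knight theorem to identify the space-indexed local time with a $\mathrm{BESQ}(2)$ process started from $0$ (this is exactly the reduction to Lemma~\ref{lem:BM_Loc_Fuct}); part~(i) is then the same Fubini computation in both treatments. The only genuine difference is in the divergence direction. You bound $Z_z\ge (B^{(1)}_z)^2$ and apply the Gaussian zero--one law for measurable linear subspaces together with Fernique's theorem to the $L^2(\tilde\nu)$-norm of $B^{(1)}$: a.s.\ finiteness would force a finite second moment, contradicting $\int_0^\delta z\,\tilde\nu(\dd z)=\infty$. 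The paper instead gives a deliberately elementary, quantitative argument (it also notes that the classical reference uses Jeulin's lemma here): assuming $\gamma:=\PP\bigl(\int W_{y-l}^2\,\nu(\dd y)\le M\bigr)>0$, it derives the pointwise lower bound $\EE(W_{y-l}^2 I_E)\ge \tfrac{\delta^2\gamma}{2}(y-l)$ by restricting to $\{W_{y-l}^2\ge\delta^2(y-l)\}$, and Fubini then yields $M\ge \tfrac{\delta^2\gamma}{2}\int(y-l)\,\nu(\dd y)$, giving the contrapositive. Your route is conceptually clean but imports two nontrivial facts from Gaussian analysis (and one should at least remark that the $L^2(\tilde\nu)$-norm is a measurable seminorm of the path, using $\sigma$-finiteness of $\tilde\nu$ near $0$); the paper's argument is self-contained. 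Both are valid proofs of the same dichotomy.
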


The investigation of the convergence
of~\eqref{eq:loc time at exit} on the event
$\{\lim_{t\uparrow\zeta}Y_t=r\}$ is similar.
This completes the study of the convergence
of the integral in~\eqref{eq:loc time at exit}.

\begin{proof}[Proofs
of Theorems~\ref{thm:Loc_time_int_nu_exit_A}
and~\ref{thm:Loc_time_int_nu_exit_BC}]
It is clear that Theorem~\ref{thm:Loc_time_int_nu_exit_A}
follows if we prove the equality
in~\eqref{eq:Loc_time_diff_infinite}.
By the Dambis-Dubins-Schwarz theorem,
there exists a Brownian motion $B$ starting from $s(x_0)$
(possibly on an enlargement of the initial probability space) such that
\begin{equation}
\label{eq:pt3}
s(Y_t)=B_{\la s(Y), s(Y)\ra_t}\quad\PP\text{-a.s.},\quad t\in[0,\zeta).
\end{equation}
Since $s(r)=\infty$ and $s(l)=-\infty$, $\PP$-a.s. we have
$\limsup_{t\uparrow\zeta}s(Y_t)=\infty$,
$\liminf_{t\uparrow\zeta}s(Y_t)=-\infty$,
hence $\la s(Y), s(Y)\ra_\zeta=\infty$~$\PP$-a.s.
It can be deduced from the It\^o-Tanaka formula
that $\PP$-a.s. it holds
\begin{equation}
\label{eq:Loc_Time_equality}
L_t^{y}(Y)
=\frac{1}{\rho(y)}
L_{\la s(Y), s(Y)\ra_t}^{s(y)}(B),
\quad(t,y)\in[0,\zeta)\times J.
\end{equation}
Since $\PP$-a.s. we have
$L^z_\infty(B)=\infty$
for any
$z\in\bbR$
(see e.g. \cite[Ch.~VI, \S~2]{RevuzYor:99}),
the equality in~\eqref{eq:Loc_time_diff_infinite} and 
Theorem~\ref{thm:Loc_time_int_nu_exit_A} follow.

We prove Theorem~\ref{thm:Loc_time_int_nu_exit_BC}
by reducing it to Lemma~\ref{lem:BM_Loc_Fuct} below,
which deals with an analogous problem for a Brownian motion.
Note first that~\eqref{eq:Loc_Time_equality} 
implies the following equality
\begin{equation}
\label{eq:main_loc_time_int_eqaulity}
\int_J L^y_\zeta(Y)\,\nu(\dd y) =
 \int_JL^{s(y)}_{\la s(Y), s(Y)\ra_\zeta}(B)\,\frac{\nu(\dd y)}{\rho(y)} \quad\pn{\PP} 
\end{equation}
Since $s(l)>-\infty$, we have $\PP(L)>0$, where
$L:=\{\lim_{t\uparrow\zeta} Y_t=l\}$.
By the equality in~\eqref{eq:pt3} it follows that
$\lim_{t\uparrow\zeta}B_{\la s(Y), s(Y)\ra_t}=s(l)$
$\PP$-a.s.~on~$L$ and hence
\begin{equation}
\label{eq:tau_time_change_at_exit}
\la s(Y), s(Y)\ra_\zeta=\tau^B_{s(l)}
\quad\PP\text{-a.s. on }L,
\end{equation}
where
$\tau^B_{s(l)}$
is the first time the Brownian motion 
$B$
hits the level
$s(l)$.
Define
$\wt\nu(\dd y):=\nu(\dd y)/\rho(y)$,
$y\in J$,
and let
$\wt\mu$
be the pushforward measure of
$\wt\nu$
via
$s$:
$\wt \mu(E)=\wt \nu(s^{-1}(E))$
for any Borel subset 
$E\subseteq s(J)$.
Equalities~\eqref{eq:main_loc_time_int_eqaulity}
and~\eqref{eq:tau_time_change_at_exit}
yield
\begin{equation*}
\int_J L^y_\zeta(Y)\,\nu(\dd y)
=\int_{s(J)}L^{z}_{\tau^B_{s(l)}}(B)\,\wt\mu(\dd z)
\quad\PP\text{-a.s. on }L.
\end{equation*}
Theorem~\ref{thm:Loc_time_int_nu_exit_BC}
now follows from 
\begin{equation*}
\int_{(s(l),s(z))}(x-s(l))\,\wt\mu(\dd x)
=\int_{(l,z)}\frac{s(y)-s(l)}{\rho(y)}\,\nu(\dd y),
\quad z\in J,
\end{equation*}
and an application of Lemma~\ref{lem:BM_Loc_Fuct}.
\end{proof}

\begin{lemma}
\label{lem:BM_Loc_Fuct}
For some $l\in\bbR$, define $I:=(l,\infty)$.
Let $B$ be a Brownian motion starting from $x_0\in I$
and $\nu$ a locally finite positive measure on~$I$.
Let $\tau^B_l$ denote the first time
$B$ hits the level~$l$.

(i) If $x-l\in L^1_\loc(l+,\nu)$, then
\begin{equation*}
\int_I L^y_{\tau^B_l}(B)\,\nu(\dd y)<\infty\quad\PP\text{-a.s.}
\end{equation*}

(ii) If $x-l\notin L^1_\loc(l+,\nu)$, then
\begin{equation*}
\int_I L^y_{\tau^B_l}(B)\,\nu(\dd y)=\infty\quad\PP\text{-a.s.}
\end{equation*}
\end{lemma}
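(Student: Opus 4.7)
The plan is to apply the first Ray-Knight theorem to recast the problem as an integrability question for an additive functional of a squared Bessel process of dimension $2$ starting from $0$, and then to combine a second-moment / Paley-Zygmund bound with Blumenthal's $0$-$1$ law.

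Translating, assume $l=0$ and set $\xi(y) := L^y_{\tau^B_0}(B)$ for $y \ge 0$. Consider $B'_t := x_0 - B_t$, which starts from $0$ and satisfies $T^{B'}_{x_0} = \tau^B_0$ and $L^y_t(B) = L^{x_0-y}_t(B')$. The first Ray-Knight theorem, applied to $B'$, shows that $(\xi(y))_{0 \le y \le x_0}$ is a squared Bessel process of dimension $2$ starting from $0$. By the Markov property at $y=x_0$, for $y \ge x_0$ the process $\xi$ is, conditionally on $\xi(x_0)$, a squared Bessel process of dimension $0$ starting from $\xi(x_0)$, and hence is a.s. absorbed at $0$ in finite time. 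Since $\nu$ is locally finite on $(0,\infty)$ and $\xi$ is a.s. of compact support, $\int_{[x_0,\infty)} \xi(y)\, \nu(\dd y) < \infty$ a.s., so the problem reduces to the finiteness of $X := \int_{(0, x_0]} \xi(y)\, \nu(\dd y)$, where $\xi$ is BESQ$^2$ from $0$ on $[0, x_0]$.

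For part (i), since $\EE[\xi(y)] = 2y$, Tonelli gives $\EE[X] = 2 \int_{(0, x_0]} y\, \nu(\dd y) < \infty$, so $X < \infty$ $\PP$-a.s. For part (ii), assume $\int_{(0, x_0]} y\, \nu(\dd y) = \infty$ and set $X_\eps := \int_{(\eps, x_0]} \xi(y)\, \nu(\dd y)$. Using the BESQ$^2$ Markov property and the fact that $\xi(y)$ is exponentially distributed with mean $2y$ (and variance $4y^2$), one obtains
\begin{equation*}
\mathrm{Cov}(\xi(y_1), \xi(y_2)) = 4 (y_1 \wedge y_2)^2, \qquad y_1, y_2 \in (0, x_0].
\end{equation*}
Since $(y_1 \wedge y_2)^2 \le y_1 y_2$, this yields $\EE[X_\eps^2] \le 2\,(\EE[X_\eps])^2$, and the Paley-Zygmund inequality gives $\PP(X_\eps > \tfrac12 \EE[X_\eps]) \ge 1/8$. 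Because $\EE[X_\eps] = 2 \int_{(\eps, x_0]} y\, \nu(\dd y) \uparrow \infty$ as $\eps \downarrow 0$, and $X_\eps \uparrow X$ by monotonicity, we conclude $\PP(X = \infty) \ge 1/8$. On the other hand, since $\int_{(\eps, x_0]} \xi(y)\, \nu(\dd y)$ is a.s. finite for every $\eps > 0$, the event $\{X = \infty\}$ lies in the germ $\sigma$-field $\bigcap_{\eps > 0} \sigma(\xi(u) : u \le \eps)$. By Blumenthal's $0$-$1$ law applied to BESQ$^2$ starting from~$0$, this germ field is trivial, so $\PP(X = \infty) \in \{0, 1\}$, and the Paley-Zygmund lower bound forces $\PP(X = \infty) = 1$.

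The main obstacle is the covariance computation in part (ii); once it is in hand, the combination of Paley-Zygmund with Blumenthal's $0$-$1$ law closes the argument. An alternative route, perhaps closer in spirit to the paper's approach, is to invoke the Pitman-Yor/Revuz-Yor Laplace functional identity for $\int \xi(y)\,\nu(\dd y)$ and analyze the associated Sturm-Liouville problem $\phi'' = 2\lambda \phi \nu$ near $0$; but the second-moment approach above seems more elementary and requires no additional machinery beyond Ray-Knight.
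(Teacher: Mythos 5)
Your proof is correct, and it shares with the paper the same opening moves: reduce to $\int_{(l,x_0)}$ by compact support of the local time profile, apply the first Ray--Knight theorem to replace $y\mapsto L^y_{\tau^B_l}(B)$ on $(l,x_0)$ by a squared two-dimensional Bessel process from $0$, and settle part~(i) by Tonelli and the first moment $\EE[\xi(y)]=2y$. Where you genuinely depart from the paper is in part~(ii). The paper argues by contraposition with a conditional first-moment bound: assuming $\PP(E)=\gamma>0$ for $E=\{\int W_{y-l}^2\,\nu(\dd y)\le M\}$, it chooses $\delta$ with $\PP(|N(0,1)|\ge\delta)\ge 1-\gamma/2$ to get $\EE[W_{y-l}^2 I_E]\ge\tfrac{\delta^2\gamma}{2}(y-l)$, and Fubini then forces $x-l\in L^1_\loc(l+,\nu)$; no zero--one law is needed, since positive probability of finiteness already implies the integrability condition. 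You instead use a second-moment bound: the covariance $4(y_1\wedge y_2)^2\le 4y_1y_2$ gives $\EE[X_\eps^2]\le 2(\EE[X_\eps])^2$, Paley--Zygmund yields $\PP(X=\infty)\ge 1/8$, and Blumenthal's $0$--$1$ law for the germ field of the squared Bessel process (or of the underlying planar Brownian motion) upgrades this to probability one; the localisation $\{X=\infty\}=\{\int_{(0,\eps]}\xi\,\dd\nu=\infty\}$ a.s.\ is what places the event in the germ field. Both arguments are elementary substitutes for Jeulin's lemma. The paper's has the small advantage of producing the stronger quantitative statement that finiteness with \emph{any} positive probability implies the deterministic condition, without invoking a $0$--$1$ law; yours has the advantage that the second-moment computation is completely mechanical once the covariance of the squared Bessel bridge of dimension two is known. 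Both are valid.
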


\begin{remark}
Lemma~\ref{lem:BM_Loc_Fuct} is known and has a long history.
On the one hand, Lemma~\ref{lem:BM_Loc_Fuct} contains
in itself Lemma~5.10 in~\cite{CinlarJacodProtterSharpe:80},
which is complemented by a criterion
for a.s.-infiniteness of the additive functional.
That is why the proof below is different from the one
of Lemma~5.10 in~\cite{CinlarJacodProtterSharpe:80}.
On the other hand, Lemma~\ref{lem:BM_Loc_Fuct}
appeared in the literature already in this form.
It can be traced back to~\cite[Lem.~1.4.1]{Assing:94}
(the discussion in~\cite[Sec.~4]{MijatovicUrusov:12c}
gives a detailed account of the history of this result). 
The proof in~\cite[Lem.~1.4.1]{Assing:94}
is based on the Ray-Knight theorem
and an application of Jeulin's~\cite{Jeulin:80} lemma
(e.g.~\cite[Lem.~1.4.2]{Assing:94}).
Here we give a proof, which replaces
the application of Jeulin's lemma by a 
simple direct argument.
\end{remark}

\begin{proof}
The mapping $x\mapsto L^x_{\tau^B_l}(B)$ is  
$\PP$-a.s. 
a continuous function with compact support in
$[l,\infty)$.
Therefore the finiteness of 
the integral 
$\int_I L^y_{\tau^B_l}(B)\,\nu(\dd y)$
reduces to the question
\begin{equation*}
\text{whether}\quad
\int_{(l,x_0)}  L^y_{\tau^B_l}(B)\,\nu(\dd y)=
\int_{(0,x_0-l)}  L^{l+u}_{\tau^B_l}(B)\,\nu(l+\dd u)
\quad\text{is finite.}
\end{equation*}

Let $\ol W$ and $\wt W$ be independent Brownian motions starting from~0.
Let us set
$\eta_t=\ol W_t^2+\wt W_t^2$,
i.e. 
$\eta=(\eta_t)_{t\in[0,\infty)}$ 
is a squared two-dimensional Bessel process starting from~0.
It follows from the first Ray-Knight theorem that
\begin{equation*}
\Law\left(L^{l+u}_{\tau^B_l}(B);u\in[0,x_0-l]\right)
=\Law\left(\eta_u;u\in[0,x_0-l]\right).
\end{equation*}
Therefore, the question is
\begin{equation}
\label{eq:Law_Loc_Time}
\text{whether}\quad
\int_{(0,x_{0}-l)}\eta_{u}\,\nu(l+\dd u)
=\int_{(l,x_{0})}\eta_{y-l}\,\nu(\dd y)
\quad\text{is finite.}
\end{equation}
In what follows we prove that, for a Brownian motion $W$ starting from~0,

\smallskip\noindent
(A) $x-l\in L^1_\loc(l+,\nu)$ implies that
$\int_{(l,x_{0})}W_{y-l}^2\,\nu(\dd y)<\infty$
$\PP$-a.s.;

\smallskip\noindent
(B) $x-l\notin L^1_\loc(l+,\nu)$ implies that
$\int_{(l,x_{0})}W_{y-l}^2\,\nu(\dd y)=\infty$
$\PP$-a.s.

\noindent
Together with~\eqref{eq:Law_Loc_Time} this will complete the proof of Lemma~\ref{lem:BM_Loc_Fuct}.

By Fubini's theorem we have
$\EE\int_{(l,x_{0})}W_{y-l}^2\,\nu(\dd y)
=\int_{(l,x_{0})}(y-l)\,\nu(\dd y)$
and (A) follows.

In order to prove (B) we assume that
\begin{equation}
\label{eq:nu_finite}
\PP\left(\int_{(l,x_{0})}
W_{y-l}^2\,\nu(\dd y)<\infty\right)>0.
\end{equation}
Then there exists a large $M<\infty$ such that
$\gamma:=\PP(E)>0$,
where
$$
E:=\left\{\int_{(l,x_{0})}
W_{y-l}^2\,\nu(\dd y)\le M\right\}.
$$
For any positive $\delta$ and $u$, the probability
$\PP(W_u^2\ge\delta^2u)=\PP(|N(0,1)|\ge\delta)$
does not depend on~$u$.
Pick a sufficiently small $\delta>0$ such that
$\PP(|N(0,1)|\ge\delta)\ge1-\frac\gamma2$
and note that, for any~$y\in(l,x_{0})$,
we have
$$
\EE\left(W_{y-l}^2I_E\right)
\ge\EE\left(W^{2}_{y-l}
I_{E\cap\{W^{2}_{y-l}\ge\delta^{2}(y-l)\}}\right)
\ge\frac{\delta^2\gamma}2(y-l).
$$
By Fubini's theorem,
\begin{equation*}
M\ge\EE\left[I_E\int_{(l,x_{0})}
W_{y-l}^2\,\nu(\dd y)\right]
=\int_{(l,x_{0})}
\EE(W_{y-l}^2I_E)\,\nu(\dd y)
\ge\frac{\delta^2\gamma}2
\int_{(l,x_{0})}(y-l)\,\nu(\dd y).
\end{equation*}
Hence~\eqref{eq:nu_finite} 
implies $x-l\in L^1_\loc(l+,\nu)$, which proves~(B),
and the lemma follows.
\end{proof}

\section{Proofs of Theorems from Section~\protect\ref{sec:sem_dif}}
\label{sec:sem_p}
In this section we will prove Theorems \ref{th:sem_dif1}, \ref{th:sem_dif2}, \ref{th:sem_dif3}
and~\ref{th:sem_dif4}. Let us assume~\eqref{eq:sem_dif1} and case~(B) of Section~\ref{sec:sem_dif}.

\newitem
\label{it:sem_p1}
Consider a sequence $(\alpha_n)_{n\in\bbN}$, $l<\alpha_n<x_0$, $\alpha_n\downarrow l$.
By the It\^o-Tanaka formula
applied to the stopped process
$g(Y^{\tau^Y_{\alpha_n}})$, $n\in\bbN$, we get that $\PP$-a.s. it holds:
\begin{equation}
\label{eq:sem_p1_1}
g(Y_t)=g(x_0)+\ol A_t+\ol M_t,\quad t\in[0,\zeta),
\end{equation}
where the locally finite measure
$\nu_g$
on the interval $J$
is defined in~\eqref{eq:nu_g}
and 
\begin{align*}
\ol A_t&=\int_J L_t^y(Y)\,\nu_g(\dd y),\quad t\in[0,\zeta),\\
\ol M_t&=\int_0^t(g'\sigma)(Y_u)\,\dd W_u,\quad t\in[0,\zeta).
\end{align*}
Let us note that
the process $\ol M=(\ol M_t)_{t\in[0,\zeta)}$
is a continuous local martingale
on the stochastic interval $[0,\zeta)$ with
\begin{equation}
\label{eq:Quad_Var_loc_time_mart}
\la\ol M,\ol M\ra_t=\int_0^t(g'\sigma)^2(Y_u)\,\dd u
=\int_J L_t^y(Y)(g')^2(y)\,\dd y,\quad t\in[0,\zeta)
\end{equation}
(the second equality follows from the occupation times formula),
and the process $\ol A=(\ol A_t)_{t\in[0,\zeta)}$
has a locally finite variation on $[0,\zeta)$.

Denote by $\Var\ol A=(\Var\ol A_t)_{t\in[0,\zeta)}$
the variation process of $\ol A$.
$\PP$-a.s. it holds that
\begin{equation}
\label{eq:sem_p1_2}
\Var\ol A_t=\int_J L_t^y(Y)\,|\nu_g|(\dd y),
\quad t\in[0,\zeta),
\end{equation}
where $|\nu_g|$ is the variation measure of~$\nu_g$.
We will now prove~\eqref{eq:sem_p1_2} by a pathwise argument,
but let us first observe that the right-hand side
of~\eqref{eq:sem_p1_2}
is, clearly, $(\cF_{t})$-adapted and finite;
finiteness $\PP$-a.s. on $\{t<\zeta\}$
follows from the fact that
$\PP$-a.s. on $\{t<\zeta\}$
the function $y\mapsto L_t^y(Y)$
is c\`adl\`ag with a compact support
and the measure $|\nu_g|$ is locally finite on~$J$.
To prove~\eqref{eq:sem_p1_2}, note that
$\PP$-a.s. on $\{t<\zeta\}$
there exists a compact interval 
$I\subset J$, which depends on $\omega$
and contains the support of $y\mapsto L_t^y(Y)$.
Let $\omega$ be fixed.
Since
$|\nu_g|(I)<\infty$,
there exists a Jordan decomposition
$\nu_g=\nu_g^+-\nu_g^-$:
$\nu_g^+$
and
$\nu_g^-$
are positive measures 
and 
$\nu_g^+(\cdot)=\nu_g(\cdot\cap P)$
and
$\nu_g^-(\cdot)=-\nu_g(\cdot\cap (I\setminus P))$
for some Borel set 
$P$
in
$I$.
Furthermore,
on
$I$
it holds
$|\nu_g|=\nu_g^++\nu_g^-$.
Note that 
\begin{equation}
\label{eq:A_fin_var_decomp}
\ol A_t(\omega)
=\int_I L_t^y(Y)(\omega)\,\nu^+_g(\dd y)
-\int_I L_t^y(Y)(\omega)\,\nu^-_g(\dd y),
\quad t\in[0,\zeta(\omega)),
\end{equation}
is a decomposition of 
$\ol A(\omega)$
into a difference of two non-decreasing
continuous functions.
To show~\eqref{eq:sem_p1_2},
it is sufficient to prove that
the measures on
$[0,\zeta(\omega))$
induced by these functions,
i.e. the measures
\begin{equation}
\label{eq:sign_measurs_on_[0t]}
\int_I \dd L_s^y(Y)(\omega)\,\nu^+_g(\dd y)
\quad\text{and}\quad
\int_I \dd L_s^y(Y)(\omega)\,\nu^-_g(\dd y),
\end{equation}
are singular
(that is the decomposition in~\eqref{eq:A_fin_var_decomp}
is minimal).
It is in fact easy to see that 
the former measure
is concentrated on the set
\begin{equation*}
\wt P=\{u\in[0,t]:Y_u(\omega)\in P\},
\end{equation*}
while the latter measure is concentrated
on the similar set, where $P$ is replaced by $I\setminus P$.
Indeed, by Fubini's theorem we have
\begin{equation*}
\int_{[0,\zeta(\omega))}I_{\wt P}(s)
\int_I \dd L_s^y(Y)(\omega)\,\nu^-_g(\dd y)
=\int_I \left(
\int_{[0,\zeta(\omega))}I_{\wt P}(s)\,\dd L_s^y(Y)(\omega)
\right)\nu^-_g(\dd y)=0,
\end{equation*}
and a similar argument applies for the other statement.
Thus, \eqref{eq:sem_p1_2}~follows.

\newitem
\label{it:sem_p2}
Whenever
\begin{equation}
\label{eq:sem_p2_1}
\PP\text{-a.s. on }\{\zeta<\infty\}\text{ there exists a finite limit }\ol M_\zeta:=\lim_{t\uparrow\zeta}\ol M_t,
\end{equation}
we extend the process $(\ol M_t)_{t\in[0,\zeta)}$ to the process $M=(M_t)_{t\in[0,\infty)}$ by setting
\begin{equation}
\label{eq:sem_p2_2}
M_t:=\ol M_{t\wedge\zeta},\quad t\in[0,\infty).
\end{equation}
Let us prove that under~\eqref{eq:sem_p2_1} $M$ is a local martingale (now on the whole $[0,\infty)$).
Indeed, there exists a sequence of stopping times $(\eta_n)_{n\in\bbN}$ such that $\eta_n\uparrow\zeta$~$\PP$-a.s.
and $M^{\eta_n}$ is a martingale for any $n\in\bbN$. For $m\in\bbN$, set
\begin{equation*}
\xi_m=\inf\{t\in[0,\infty)\colon|M_t|\ge m\}\qquad(\inf\emptyset:=\infty)
\end{equation*}
and note that $\xi_m\uparrow\infty$~$\PP$-a.s. as $m\uparrow\infty$.
Since, for a fixed $m\in\bbN$, the processes $M^{\eta_n\wedge\xi_m}$, $n\in\bbN$,
are uniformly (in~$n$) bounded martingales and $M^{\eta_n\wedge\xi_m}_t\to M^{\xi_m}_t$~$\PP$-a.s. as $n\to\infty$
(note that $M$ is stopped at~$\zeta$), then the process $M^{\xi_m}$ is a martingale for any $m\in\bbN$.
Thus, $M=(M_t)_{t\in[0,\infty)}$ is a local martingale.

\newitem
\label{it:sem_p3}
Since we consider case~(B) of
Section~\ref{sec:sem_dif}, we have
$\lim_{t\uparrow\zeta}Y_t=l$
$\PP$-a.s. on $\{\zeta<\infty\}$,
and there is a finite limit $g(l):=\lim_{x\downarrow l}g(x)$.
Then it follows from~\eqref{eq:sem_p1_1} that condition~\eqref{eq:sem_p2_1} is equivalent to
\begin{equation}
\label{eq:sem_p3_1}
\PP\text{-a.s. on }\{\zeta<\infty\}\text{ there exists a finite limit }\ol A_\zeta:=\lim_{t\uparrow\zeta}\ol A_t.
\end{equation}
Whenever \eqref{eq:sem_p3_1} holds, we extend the process $(\ol A_t)_{t\in[0,\zeta)}$ to the process $A=(A_t)_{t\in[0,\infty)}$ by setting
\begin{equation}
\label{eq:sem_p3_1a}
A_t:=\ol A_{t\wedge\zeta},\quad t\in[0,\infty).
\end{equation}
Finally, let us note that the condition
\begin{equation}
\label{eq:sem_p3_2}
\Var\ol A_\zeta<\infty\quad\PP\text{-a.s. on }\{\zeta<\infty\}
\end{equation}
implies~\eqref{eq:sem_p3_1} and under~\eqref{eq:sem_p3_2} the process $A=(A_t)_{t\in[0,\infty)}$
has a locally finite variation (on the whole $[0,\infty)$).

\newitem
\label{it:sem_p4}
By applying Theorem~\ref{thm:Loc_time_int_nu_exit_BC}
with the positive measure
$\nu(\dd y) = (g')^2(y)\dd y$,
we obtain from~\eqref{eq:Quad_Var_loc_time_mart}
the following alternative
(additionally use the Dambis-Dubins-Schwarz theorem
for continuous local martingales on stochastic intervals):

\smallskip\noindent
$(M_1)$ If \eqref{eq:sem_dif7} is satisfied, then
\begin{equation*}
\la\ol M,\ol M\ra_\zeta<\infty\quad\PP\text{-a.s. on }\{\zeta<\infty\},
\end{equation*}
hence \eqref{eq:sem_p2_1} and~\eqref{eq:sem_p3_1} hold.

\smallskip\noindent
$(M_2)$ If \eqref{eq:sem_dif7} is violated, then
\begin{equation*}
\la\ol M,\ol M\ra_\zeta=\infty\quad\PP\text{-a.s. on }\{\zeta<\infty\},
\end{equation*}
hence
\begin{align}
\label{eq:sem_p4_1}
\limsup_{t\uparrow\zeta}\ol M_t
&=-\liminf_{t\uparrow\zeta}\ol M_t=\infty\quad\PP\text{-a.s. on }\{\zeta<\infty\},\\
\label{eq:sem_p4_2} 
\limsup_{t\uparrow\zeta}\ol A_t
&=-\liminf_{t\uparrow\zeta}\ol A_t=\infty\quad\PP\text{-a.s. on }\{\zeta<\infty\}.
\end{align}
(Let us note that \eqref{eq:sem_p4_2} follows from~\eqref{eq:sem_p4_1}
via~\eqref{eq:sem_p1_1}.)
Applying
Theorem~\ref{thm:Loc_time_int_nu_exit_BC}
once again 
with the measure
$\nu=|\nu_g|$,
we get from~\eqref{eq:sem_p1_2} another alternative:

\smallskip\noindent
$(A_1)$ \eqref{eq:sem_dif3} implies~\eqref{eq:sem_p3_2}.

\smallskip\noindent
$(A_2)$ If \eqref{eq:sem_dif3} is violated, then
\begin{equation*}
\Var\ol A_\zeta=\infty\quad\PP\text{-a.s. on }\{\zeta<\infty\}.
\end{equation*}

\newitem
\label{it:sem_p5}
Let us now assume that $g(Y)$ is a semimartingale, i.e.
\begin{equation*}
g(Y_t)=g(x_0)+\wt A_t+\wt M_t,\quad t\in[0,\infty),
\end{equation*}
with a continuous process $\wt A=(\wt A_t)_{t\in[0,\infty)}$
of a locally finite variation and a continuous local martingale
$\wt M=(\wt M_t)_{t\in[0,\infty)}$. Then, for $t\in[0,\infty)$,
\begin{equation*}
\wt A_t=\ol A_t\quad\text{and}\quad\wt M_t=\ol M_t\quad\PP\text{-a.s. on }\{t<\zeta\},
\end{equation*}
hence \eqref{eq:sem_p2_1} and~\eqref{eq:sem_p3_2} hold.
By alternatives $(M_1)$,~$(M_2)$ and $(A_1)$,~$(A_2)$ above,
\eqref{eq:sem_dif7} and~\eqref{eq:sem_dif3} hold.
This proves Theorem~\ref{th:sem_dif3}
and the ``only~if''-part of Theorem~\ref{th:sem_dif1}.

\newitem
\label{it:sem_p6}
In order to prove the ``if''-part of Theorem~\ref{th:sem_dif1}
we now assume that \eqref{eq:sem_dif3} holds.
By~$(A_1)$ and the reasoning in item~\ref{it:sem_p3},
\eqref{eq:sem_p3_2} and~\eqref{eq:sem_p2_1}
(which is equivalent to~\eqref{eq:sem_p3_1}) are satisfied.
Then, by items~\ref{it:sem_p2} and~\ref{it:sem_p3},
$g(Y)$ is a semimartingale with the decomposition
\begin{equation*}
g(Y_t)=g(x_0)+A_t+M_t,\quad t\in[0,\infty),
\end{equation*}
where $A$ and $M$ are given in \eqref{eq:sem_p3_1a}
and~\eqref{eq:sem_p2_2}.

Thus, Theorem~\ref{th:sem_dif1} is proved.
Theorem~\ref{th:sem_dif4} can be proved by a similar reasoning
(again use the alternatives $(M_1)$,~$(M_2)$ and $(A_1)$,~$(A_2)$
and items \ref{it:sem_p2} and~\ref{it:sem_p3}).

\newitem
\label{it:sem_p7}
It remains to prove Theorem~\ref{th:sem_dif2}.
Let us assume that \eqref{eq:sem_dif6} is satisfied.
Then $\PP$-a.s. on $\{\zeta<\infty\}$ it holds:
\begin{equation}
\label{eq:sem_p7_1}
\text{there exists }\eps>0\text{ such that }(\ol A_t)_{t\in(\zeta-\eps,\zeta)}\text{ is monotone,}
\end{equation}
hence, $\PP$-a.s. on $\{\zeta<\infty\}$ there exist limits
\begin{equation*}
\ol A_\zeta:=\lim_{t\uparrow\zeta}\ol A_t
\quad\text{and}\quad
\ol M_\zeta:=\lim_{t\uparrow\zeta}\ol M_t,
\end{equation*}
which are either both finite or both infinite (see~\eqref{eq:sem_p1_1}).
By alternative $(M_1)$,~$(M_2)$ above,
either the limit $\lim_{t\uparrow\zeta}\ol M_t$ is finite
or \eqref{eq:sem_p4_1} holds.
Then $\ol M_\zeta$ and, consequently, $\ol A_\zeta$ are finite.
Thus, \eqref{eq:sem_p3_1} holds.

Now it follows from the fact that $\ol A$ has a locally finite variation on $[0,\zeta)$
and from \eqref{eq:sem_p7_1} and~\eqref{eq:sem_p3_1} that \eqref{eq:sem_p3_2} holds.
By alternative $(A_1)$,~$(A_2)$, we get that \eqref{eq:sem_dif3} holds,
hence, by Theorem~\ref{th:sem_dif1}, $g(Y)$ is a semimartingale.
This completes the proof.

\appendix
\section{Bessel Process of Dimension~\protect$\delta\in(0,1)$ Is Not a Semimartingale}
\label{sec:BesNSem}
It is known that a Bessel process of dimension
$\delta\in(0,1)$
is not a semimartingale.
However, we did not find a direct reference for this.
We think this can be deduced from the general
Theorem~7.9 in~\cite{CinlarJacodProtterSharpe:80},
but this does not look straightforward.
Therefore, we now present a direct proof.

Let $x_{0}\ge0$.
Consider a squared Bessel process $Y$
of dimension $\delta\in(0,1)$
starting from $x_{0}^{2}$, i.e. $Y$ satisfies
\begin{equation}
\label{eq:BesNSem1}
Y_{t}=x_{0}^{2}+\delta t+\int_{0}^{t}2\sqrt{Y_{s}}\,\dd W_{s},
\quad t\ge0,
\end{equation}
where $W$ is a Brownian motion.
It is well-known that SDE~\eqref{eq:BesNSem1}
has a pathwise unique strong solution,
which is nonnegative, and it holds
\begin{equation}
\label{eq:BesNSem2}
\int_{0}^{\infty}I(Y_{s}=0)\,\dd s=0\quad\text{a.s.}
\end{equation}
(see~\cite[Ch.~XI, \S~1]{RevuzYor:99}).
A Bessel process of dimension $\delta\in(0,1)$
starting from~$x_{0}$ is by definition
\begin{equation*}
\rho_{t}:=\sqrt{Y_{t}},\quad t\ge0.
\end{equation*}
Assume $\rho=x_{0}+M+A$
for a continuous local martingale $M$
and a continuous finite variation process $A$
with $M_{0}=A_{0}=0$.
In particular, $\rho$ has a continuous in $t$
and c\`adl\`ag in $a$ version
$(L_{t}^{a}(\rho);t\ge0,a\in\bbR)$
of local time.
The process
$\int_{0}^{.}I(\rho_{s}=0)\,\dd M_{s}$
is a continuous local martingale
starting from $0$ with the quadratic variation
\begin{equation*}
\int_{0}^{t}I(\rho_{s}=0)\,\dd\la M,M\ra_{s}
=\int_{0}^{t}I(\rho_{s}=0)\,\dd\la \rho,\rho\ra_{s}
=\int_{\bbR}I_{\{0\}}(a)L_{t}^{a}(\rho)\,\dd a=0
\quad\text{a.s.,}\quad t\ge0,
\end{equation*}
where the second equality follows from
the occupation times formula
(see~\cite[Ch.~VI, Cor.~1.6]{RevuzYor:99}), i.e.
\begin{equation}
\label{eq:BesNSem3}
\int_{0}^{t}I(\rho_{s}=0)\,\dd M_{s}=0
\quad\text{a.s.,}\quad t\ge0.
\end{equation}
Since $Y=\rho^{2}$, we have
\begin{equation}
\label{eq:BesNSem4}
Y_{t}=x_{0}^{2}+\int_{0}^{t}2\rho_{s}\,\dd M_{s}
+\int_{0}^{t}(2\rho_{s}\,\dd A_{s}+\dd\la\rho,\rho\ra_{s}),
\quad t\ge0.
\end{equation}
Comparing decompositions~\eqref{eq:BesNSem1}
and~\eqref{eq:BesNSem4}
and using~\eqref{eq:BesNSem3} and~\eqref{eq:BesNSem2}
we obtain
\begin{equation*}
M_{t}=\int_{0}^{t}I(\rho_{s}\ne0)\,\dd M_{s}
=\int_{0}^{t}I(\rho_{s}\ne0)\,\dd W_{s}=W_{t}
\quad\text{a.s.,}\quad t\ge0.
\end{equation*}
Then $\la\rho,\rho\ra_{t}=\la M,M\ra_{t}=t$,
hence, by~\eqref{eq:BesNSem1} and~\eqref{eq:BesNSem4},
\begin{equation*}
\int_{0}^{t}2\rho_{s}\,\dd A_{s}
=(\delta-1)t,\quad t\ge0,
\end{equation*}
which yields
\begin{equation}
\label{eq:BesNSem5}
A_{t}=\int_{0}^{t}I(\rho_{s}=0)\,\dd A_{s}
+\int_{0}^{t}I(\rho_{s}\ne0)
\frac{\delta-1}{2\rho_{s}}\,\dd s
\quad\text{a.s.,}\quad t\ge0.
\end{equation}
By the occupation times formula,
for the term
$\int_{0}^{t}I(\rho_{s}\ne0)
\frac{\delta-1}{2\rho_{s}}\,\dd s$
to be finite, we necessarily have
$L_{t}^{0}(\rho)=0$~a.s., $t\ge0$.
Furthermore,
$L_{t}^{0-}(\rho)=0$~a.s., $t\ge0$,
because $\rho$ is nonnegative.
By~\cite[Ch.~VI, Th.~1.7]{RevuzYor:99},
\begin{equation*}
\int_{0}^{t}I(\rho_{s}=0)\,\dd A_{s}
=\frac12(L_{t}^{0}(\rho)-L_{t}^{0-}(\rho))=0
\quad\text{a.s.,}\quad t\ge0.
\end{equation*}
Thus, using~\eqref{eq:BesNSem5}, we get that $\rho$
is a nonnegative global (i.e. on $[0,\infty)$)
solution of the SDE
\begin{equation}
\label{eq:BesNSem6}
\dd\rho_{t}
=I(\rho_{t}\ne0)\frac{\delta-1}{2\rho_{t}}\,\dd t
+\dd W_{t}.
\end{equation}
But, by \cite[Th.~2.13]{ChernyEngelbert:05},
the latter SDE does not have a nonnegative global solution.
Here is a description of what happens:
the singular point~$0$ of SDE~\eqref{eq:BesNSem6}
has \emph{right type~1},
which is one of \emph{non-entrance types},
in the terminology of~\cite{ChernyEngelbert:05},
that is, after $\rho$ reaches~$0$,
which happens at a finite time with probability~$1$,
it cannot be continued in the positive direction
(also see \cite[Sec.~2.4]{ChernyEngelbert:05}).
The obtained contradiction completes the proof.

\section{Behaviour of One-Dimensional Diffusions}
\label{app:Friff_Prop}
Now we state some well-known results
about the behaviour of a one-dimensional diffusion $Y$
of~\eqref{eq:set1}
with the coefficients satisfying~\eqref{eq:set2}
and~\eqref{eq:set3}.
These results follow from the construction
of solutions of~\eqref{eq:set1}
(see e.g.~\cite{EngelbertSchmidt:91}
or \cite[Ch.~5.5]{KaratzasShreve:91}
or \cite[Ch.~2 and Ch.~4]{ChernyEngelbert:05})
or can be deduced from the results
in~\cite[Sec.~1.5]{EngelbertSchmidt:89}.

\begin{proposition}
\label{prop:set1}
For any $a\in J$, with
\begin{equation*}
\tau^{Y}_{a}:=\inf\{t\ge0:Y_{t}=a\}
\qquad(\inf\emptyset:=\infty),
\end{equation*}
we have $\PP(\tau^Y_a<\infty)>0$.
\end{proposition}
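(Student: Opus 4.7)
The case $a=x_0$ is trivial, so assume $a\ne x_0$ and, by symmetry, $a>x_0$. Fix any $b\in J$ with $b<x_0$ and set $\tau:=\tau^Y_a\wedge\tau^Y_b$. Path-continuity of $Y$ forces $\tau\le\zeta$ \pn{\PP}: the trajectory cannot reach $l$ or $r$ without first crossing one of the interior levels $a,b$.

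By \eqref{eq:set4}--\eqref{eq:set5}, the scale function $s$ satisfies $\tfrac12\sigma^2 s''+\mu s'=0$ on $J$, so It\^o's formula yields $\dd s(Y_t)=\rho(Y_t)\sigma(Y_t)\,\dd W_t$ for $t\in[0,\zeta)$; in particular $s(Y)$ is a continuous local martingale on the stochastic interval $[0,\zeta)$ with $\la s(Y),s(Y)\ra_t=\int_0^t(\rho\sigma)^2(Y_u)\,\dd u$. Because $\tau\le\zeta$ and $s(Y_{\cdot\wedge\tau})$ takes values in the compact interval $[s(b),s(a)]$, a standard localisation argument shows that $M_t:=s(Y_{t\wedge\tau})$, $t\in[0,\infty)$, is a bounded continuous martingale on the whole $[0,\infty)$. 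Bounded martingale convergence gives $M_\infty:=\lim_{t\to\infty}M_t$ \pn{\PP}, with $\EE M_\infty=s(x_0)$.

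The main step, and the principal obstacle, is to verify
\begin{equation*}
\PP(\tau<\infty)=1,\tag{$\ast$}
\end{equation*}
i.e.\ the classical fact that the Engelbert--Schmidt diffusion $Y$ a.s.\ leaves the compact subinterval $[b,a]\subset J$ in finite time. I would derive $(\ast)$ via the Dambis--Dubins--Schwarz representation: on a possibly enlarged probability space there exists a Brownian motion $B$ starting from $s(x_0)$ with $s(Y_t)=B_{Q_t}$ on $[0,\zeta)$, where $Q_t:=\int_0^t(\rho\sigma)^2(Y_u)\,\dd u$. On $\{\tau=\infty\}$ the process $s(Y)$ stays in $[s(b),s(a)]$ for all $t\ge0$, and the a.s.\ unboundedness of Brownian motion forces $Q_\infty:=\lim_{t\to\infty}Q_t<\infty$. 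Together with the occupation times formula $Q_\infty=\int_b^a L^y_\infty(Y)(\rho\sigma)^2(y)\,\dd y$ and the non-degeneracy $\sigma\ne0$ on $J$, this yields the desired contradiction, as can be read off from the explicit Engelbert--Schmidt construction of $Y$ in~\cite{EngelbertSchmidt:91}, \cite[Ch.~5.5]{KaratzasShreve:91} or \cite[Ch.~2 and Ch.~4]{ChernyEngelbert:05}.

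Granted $(\ast)$, optional stopping applied to $M$ at the a.s.\ finite stopping time $\tau$ gives
\begin{equation*}
s(x_0)=\EE M_\tau=s(a)\,\PP(Y_\tau=a)+s(b)\,\PP(Y_\tau=b),
\end{equation*}
whence
\begin{equation*}
\PP(\tau^Y_a<\infty)\ge\PP(Y_\tau=a)=\frac{s(x_0)-s(b)}{s(a)-s(b)}>0,
\end{equation*}
as required.
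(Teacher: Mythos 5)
The paper itself offers no proof of this proposition --- it refers to the construction of solutions in the cited references --- so your attempt to write one out is reasonable, and the scale-function/optional-stopping skeleton is the standard and correct one. The genuine gap sits in the step you yourself flag as the principal obstacle, namely $(\ast)$. First, a computational slip: since $\la Y,Y\ra_t=\int_0^t\sigma^2(Y_u)\,\dd u$, the occupation times formula gives $Q_\infty=\int_b^a\rho^2(y)L^y_\infty(Y)\,\dd y$, not $\int_b^a(\rho\sigma)^2(y)L^y_\infty(Y)\,\dd y$. More seriously, the contradiction you assert does not follow from ``$Q_\infty<\infty$ together with the non-degeneracy $\sigma\ne0$'': pointwise non-vanishing of the Borel function $\sigma$ does not bound $\sigma^2$ away from zero on $[b,a]$, and the conclusion $\int_b^a\rho^2(y)L^y_\infty(Y)\,\dd y<\infty$ (equivalently $\la Y,Y\ra_\infty<\infty$) is by itself perfectly consistent with $Y$ remaining in $[b,a]$ forever. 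The ingredient you never invoke is the second Engelbert--Schmidt condition $1/\sigma^2\in L^1_\loc(J)$, and it is exactly what is needed; the paper explicitly warns at the end of the appendix that these results do not in general hold beyond \eqref{eq:set2} and~\eqref{eq:set3}.

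Here is how to close the gap with the objects you already have. On $\{\tau=\infty\}$ you correctly obtain $Q_\infty<\infty$, whence $z\mapsto L^z_{Q_\infty}(B)$ is a.s.\ continuous with compact support, hence bounded; by the identity $L^{s(y)}_{Q_t}(B)=\rho(y)L^y_t(Y)$ (formula~\eqref{eq:Loc_Time_equality} of the paper) and the positivity of $\inf_{[b,a]}\rho$, the local times $L^y_\infty(Y)$ are then bounded uniformly in $y\in[b,a]$ by some a.s.\ finite~$C$. On the other hand, applying the occupation times formula to $\sigma^{-2}$ yields, still on $\{\tau=\infty\}$ (where $Y$ never leaves $[b,a]$),
\begin{equation*}
\infty=\int_0^\infty\dd u
=\int_0^\infty\sigma^{-2}(Y_u)\,\dd\la Y,Y\ra_u
=\int_{[b,a]}\sigma^{-2}(y)\,L^y_\infty(Y)\,\dd y
\le C\int_{[b,a]}\sigma^{-2}(y)\,\dd y<\infty
\end{equation*}
by~\eqref{eq:set3}, a contradiction. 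This proves $(\ast)$, and the rest of your argument then goes through, modulo the cosmetic point that $s$ is only $C^1$ with an absolutely continuous derivative, so the generalized It\^o (It\^o--Tanaka) formula rather than the classical one is needed to get $\dd s(Y_t)=(\rho\sigma)(Y_t)\,\dd W_t$.
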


Let us consider the sets
\begin{align*}
A&=\left\{\zeta=\infty,\;\limsup_{t\to\infty}Y_t=r,\;\liminf_{t\to\infty}Y_t=l\right\},\\
B_r&=\left\{\zeta=\infty,\;\lim_{t\to\infty}Y_t=r\right\},\\
C_r&=\left\{\zeta<\infty,\;\lim_{t\uparrow\zeta}Y_t=r\right\},\\
B_l&=\left\{\zeta=\infty,\;\lim_{t\to\infty}Y_t=l\right\},\\
C_l&=\left\{\zeta<\infty,\;\lim_{t\uparrow\zeta}Y_t=l\right\}.
\end{align*}

\begin{proposition}
\label{prop:set2}
Either $\PP(A)=1$ or $\PP(B_r\cup B_l\cup C_r\cup C_l)=1$.
\end{proposition}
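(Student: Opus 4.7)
The plan is to transform $Y$ to natural scale via the scale function $s$ from~\eqref{eq:set4}--\eqref{eq:set5} and reduce the dichotomy to the recurrence/transience alternative for Brownian motion.

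Applying It\^o's formula on the stochastic interval $[0,\zeta)$, and using that $\rho$ is constructed so that the drift of $s(Y)$ vanishes, one obtains
\begin{equation*}
s(Y_t)=s(x_0)+\int_0^t(\rho\sigma)(Y_u)\,\dd W_u,\quad t\in[0,\zeta),
\end{equation*}
so $s(Y)$ is a continuous local martingale on $[0,\zeta)$ with bracket $\la s(Y),s(Y)\ra_t=\int_0^t(\rho\sigma)^2(Y_u)\,\dd u$. The Dambis--Dubins--Schwarz theorem, applied on the stochastic interval $[0,\zeta)$, then produces, on a possibly enlarged probability space, a Brownian motion $B$ starting from $s(x_0)$ with $s(Y_t)=B_{\la s(Y),s(Y)\ra_t}$ for $t\in[0,\zeta)$. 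Set $T:=\lim_{t\uparrow\zeta}\la s(Y),s(Y)\ra_t\in[0,\infty]$.

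I would then distinguish two cases. On $\{T=\infty\}$, the recurrence of $B$ gives $\limsup_{t\uparrow\zeta}s(Y_t)=+\infty=-\liminf_{t\uparrow\zeta}s(Y_t)$~\pn{\PP}, which, combined with $s(Y_t)\in(s(l),s(r))$, forces $s(l)=-\infty$ and $s(r)=+\infty$; moreover $Y_t$ admits no limit as $t\uparrow\zeta$, so $\zeta=\infty$ and the event $A$ occurs. On $\{T<\infty\}$, continuity of $B$ yields $s(Y_t)\to B_T\in\bbR$, hence $Y_t$ converges in $\ol J$ to some random point as $t\uparrow\zeta$. If $\zeta<\infty$, the limit lies in $\{l,r\}$ by the definition of $\zeta$, placing us in $C_l\cup C_r$; if $\zeta=\infty$, the limit could a priori lie in $J$, which must be excluded.

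To rule out interior limits, fix $y^*\in J$ and $\delta>0$ with $[y^*-\delta,y^*+\delta]\subset J$, and put $I:=(y^*-\delta,y^*+\delta)$. By the standard theory of one-dimensional diffusions under the Engelbert--Schmidt conditions~\eqref{eq:set2}--\eqref{eq:set3} (see~\cite{EngelbertSchmidt:91}; alternatively, apply DDS to $s(Y)$ stopped at the first exit of $Y$ from $I$ and use that a Brownian motion a.s. exits any bounded interval in finite time), $Y$ started from any point of $I$ exits $I$ in finite time~\pn{\PP}. Writing $\phi(y):=\PP(Y_t\in I\;\forall t\ge 0\mid Y_0=y)$, this gives $\phi\equiv 0$ on $I$. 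The Markov property at time $n$ then yields $\PP(Y_t\in I\;\forall t\ge n)=\EE[\phi(Y_n)I(Y_n\in I)]=0$, and monotone convergence in $n$ gives $\PP(Y_t\in I\text{ eventually})=0$. Since $\{Y_t\to y^*\}\subseteq\{Y_t\in I\text{ eventually}\}$, we conclude $\PP(\zeta=\infty,\,Y_t\to y^*)=0$ for every $y^*\in J$, and therefore $\PP(A\cup B_l\cup B_r\cup C_l\cup C_r)=1$.

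Finally, the dichotomy $\PP(A)\in\{0,1\}$ follows from a deterministic characterisation: the case analysis shows $A=\{T=\infty\}$ modulo null sets, and $\PP(T=\infty)>0$ forces $s(l)=-\infty$ and $s(r)=+\infty$ by the first case. Conversely, under these two deterministic conditions $T<\infty$ would force $s(Y_t)\to B_T\in\bbR$, which is incompatible with $Y_t$ converging to $l$ or $r$ (where $s$ takes infinite values) or to any interior point (excluded in the previous paragraph); hence $T=\infty$~\pn{\PP}. Thus $\PP(A)=1$ precisely when $s(l)=-\infty$ and $s(r)=+\infty$, and $\PP(B_r\cup B_l\cup C_r\cup C_l)=1$ otherwise, yielding the claimed alternative. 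The main technical obstacle is the a.s.\ finiteness of exit times from compact sub-intervals of $J$ under the weak conditions~\eqref{eq:set2}--\eqref{eq:set3}: this is routine for classical diffusions but requires the time-change representation and care with the Engelbert--Schmidt construction, since $\sigma$ need not be locally bounded away from zero.
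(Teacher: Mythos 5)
The paper offers no proof of Proposition~\ref{prop:set2}: it is stated as a known consequence of the Engelbert--Schmidt construction, with pointers to the literature. Your argument is correct and is essentially a self-contained rendering of that standard route --- pass to natural scale, time-change $s(Y)$ into a Brownian motion $B$ via Dambis--Dubins--Schwarz on $[0,\zeta)$, and read the dichotomy off the recurrence of $B$ together with the identification of $\{\PP(A)=1\}$ with the deterministic condition $s(l)=-\infty$, $s(r)=\infty$ (which is exactly the remark following Proposition~\ref{prop:set3}); the same representation, \eqref{eq:pt3}, is the one the paper itself uses in Section~\ref{sec:Additive_F}. Two points should be tightened. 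First, when excluding interior limits you pass from ``$\PP(\zeta=\infty,\,Y_t\to y^*)=0$ for every $y^*\in J$'' to the conclusion; as written that is an uncountable union. Your own interval argument already proves the stronger statement $\PP(Y_t\in I\text{ eventually})=0$ for each open $I$ with $\ol I\subset J$, and this event contains $\{Y_t\to z\}$ for \emph{every} $z\in I$, so covering $J$ by countably many such intervals finishes the step --- say this explicitly. Second, the exclusion step rests on two external ingredients: the a.s.\ finiteness of exit times from compact subintervals of $J$ under only \eqref{eq:set2}--\eqref{eq:set3}, and the (strong) Markov property of the law-unique weak solution. Both are true and come from the time-change construction (the speed measure $\dd y/(\rho\sigma)^2(s^{-1}(y))$ is locally finite on $s(J)$ precisely because $1/\sigma^2\in L^1_\loc(J)$, so the additive functional inverted in that construction is finite before $B$ exits a compact interval), but they are the genuinely nontrivial inputs, as you rightly flag at the end. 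With these two points made explicit the proof is complete and matches what the cited references supply.
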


\begin{proposition}
\label{prop:set3}
(i) $\PP(B_r\cup C_r)=0$ holds if and only if $s(r)=\infty$.

(ii) $\PP(B_l\cup C_l)=0$ holds if and only if $s(l)=-\infty$.
\end{proposition}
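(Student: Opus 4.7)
The plan is to reduce the statement to well-known properties of Brownian motion via the scale function and a Dambis-Dubins-Schwarz time change. It suffices to prove part~(i); part~(ii) is symmetric.

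First I would use the defining property of the scale function (which amounts to $\frac12\sigma^2 s''+\mu s' =0$) and apply It\^o's formula on the stochastic interval $[0,\zeta)$ to obtain
\begin{equation*}
s(Y_t) = s(x_0) + \int_0^t (s'\sigma)(Y_u)\,\dd W_u,\quad t\in[0,\zeta),
\end{equation*}
so that $s(Y)$ is a continuous local martingale on $[0,\zeta)$ taking values in $(s(l),s(r))$. By the Dambis-Dubins-Schwarz theorem, applied on the stochastic interval (after enlarging the probability space if necessary), there exists a Brownian motion $B$ starting from $s(x_0)$ such that
\begin{equation*}
s(Y_t) = B_{T_t},\quad t\in[0,\zeta),\qquad T_t:=\la s(Y),s(Y)\ra_t.
\end{equation*}

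Next I would transfer the asymptotic behaviour of $Y$ at $\zeta$ to that of $B$. Setting $T_\zeta := \lim_{t\uparrow\zeta}T_t\in[0,\infty]$, on the event $B_r\cup C_r$ one has $\lim_{t\uparrow\zeta}s(Y_t)=s(r)$, and hence $\lim_{u\uparrow T_\zeta}B_u = s(r)$. If $s(r)=\infty$, this limit is $+\infty$: it forces $T_\zeta=\infty$ (otherwise continuity of $B$ yields a finite limit) and then $\lim_{u\to\infty}B_u=+\infty$, which has probability zero for a Brownian motion. Thus $s(r)=\infty$ implies $\PP(B_r\cup C_r)=0$. Conversely, if $s(r)<\infty$, the Brownian motion $B$ starting from $s(x_0)\in(s(l),s(r))$ hits the finite level $s(r)$ with positive probability; on that event one verifies through the time change that $\lim_{t\uparrow\zeta}s(Y_t)=s(r)$, and since $s$ is strictly increasing, $\lim_{t\uparrow\zeta}Y_t=r$, showing $\PP(B_r\cup C_r)>0$.

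The main obstacle I anticipate is the careful justification of the correspondence between the behaviour of $s(Y)$ at $\zeta$ and the behaviour of $B$ at the exit time of $(s(l),s(r))$. Specifically, one must separate the cases $T_\zeta<\infty$ and $T_\zeta=\infty$, check that the time change $T$ is strictly increasing on $[0,\zeta)$ (so that $B$ genuinely inherits the continuous excursions of $s(Y)$), and, for the converse direction, argue that $B$ reaches $s(r)$ before escaping through the lower boundary $s(l)$ with positive probability (which follows, for instance, from the standard two-sided gambler's-ruin identity for BM when $s(l)>-\infty$, and trivially when $s(l)=-\infty$). Once this bridge is in place, everything reduces to elementary facts about Brownian motion.
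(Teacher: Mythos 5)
The paper itself does not prove Proposition~\ref{prop:set3}: the appendix presents it as a known consequence of the construction of solutions of~\eqref{eq:set1} and refers to Engelbert--Schmidt, Karatzas--Shreve and Cherny--Engelbert. Your reduction to Brownian motion via the scale function and the Dambis--Dubins--Schwarz theorem is the standard route taken in those references, and your forward implication ($s(r)=\infty\Rightarrow\PP(B_r\cup C_r)=0$) is complete: on $B_r\cup C_r$ one gets $B_u\to+\infty$ as $u\uparrow T_\zeta$, which forces $T_\zeta=\infty$ and contradicts $\liminf_{u\to\infty}B_u=-\infty$ a.s.

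The converse direction, however, has a genuine gap. From $s(Y_t)=B_{T_t}$ and the continuity of $T$ you only obtain that $B_u\in(s(l),s(r))$ for all $u<T_\zeta$, i.e. $T_\zeta\le\sigma:=\inf\{u:B_u\notin(s(l),s(r))\}$. The event you produce with positive probability --- that $B$ reaches $s(r)$ before $s(l)$ --- concerns $B$ at time $\sigma$, and to pull it back to $\{\lim_{t\uparrow\zeta}Y_t=r\}$ you need the reverse inequality $T_\zeta\ge\sigma$; otherwise $B_{T_\zeta}$ may lie strictly inside $(s(l),s(r))$, i.e. $Y$ may converge to an interior point of $J$ with $\zeta=\infty$. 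Neither strict monotonicity of $T$ on $[0,\zeta)$ nor the gambler's-ruin identity for $B$ excludes this; what excludes it is the regularity of the diffusion: under \eqref{eq:set2}--\eqref{eq:set3} the speed measure $\dd y/(\rho\sigma^2)$ is locally finite, hence $\EE[\tau^Y_a\wedge\tau^Y_b]<\infty$ for $l<a<x_0<b<r$ and $Y$ a.s. leaves every compact subinterval of~$J$ (this is also the content of Proposition~\ref{prop:set2}, which is likewise unproved from the DDS representation alone). This is precisely where \eqref{eq:set3} enters, and your argument never invokes it --- a warning sign, since without local integrability of $1/\sigma^2$ the solution can stall at an interior point and the converse fails even when $s(r)<\infty$; the paper explicitly cautions at the end of the appendix that these results do not hold beyond \eqref{eq:set2} and~\eqref{eq:set3}. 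Once you insert the lemma that $\tau^Y_a\wedge\tau^Y_b<\infty$ a.s. for all $l<a<x_0<b<r$ (equivalently, that $T_\zeta=\sigma$ a.s.), your plan goes through.
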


In particular, we get that $\PP(A)=1$ holds if and only if $s(r)=\infty$, $s(l)=-\infty$.

\begin{proposition}
\label{prop:set4}
Assume that $s(r)<\infty$.
Then either $\PP(B_r)>0$, $\PP(C_r)=0$ or $\PP(B_r)=0$, $\PP(C_r)>0$.
Furthermore, we have
\begin{equation*}
\PP\left(\lim_{t\uparrow\zeta}Y_t=r,\;\;Y_t>a\;\forall t\in[0,\zeta)\right)>0
\end{equation*}
for any $a<x_0$.
\end{proposition}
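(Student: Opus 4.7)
The plan is to establish the second (positive-probability) statement first via a gambler's-ruin computation for the local martingale $s(Y)$, and then to obtain the dichotomy from Feller's test at the endpoint $r$.

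For the second assertion, fix $a<x_0$ and a sequence $b_n\uparrow r$ with $b_n\in(x_0,r)$. The stopped process $s(Y^{\tau^Y_{a,b_n}})$ is a bounded continuous martingale and hence has finite total quadratic variation; since $(\rho\sigma)^2$ is bounded away from zero on the compact set $[a,b_n]\subset J$, the process $\la s(Y),s(Y)\ra_t=\int_0^t(\rho\sigma)^2(Y_u)\,\dd u$ grows at least linearly on $\{t<\tau^Y_{a,b_n}\}$, which forces $\tau^Y_{a,b_n}<\infty$ $\PP$-a.s. Optional stopping then gives the gambler's-ruin formula $\PP(\tau^Y_{b_n}<\tau^Y_a)=(s(x_0)-s(a))/(s(b_n)-s(a))$, and since the events $\{\tau^Y_{b_n}<\tau^Y_a\}$ are decreasing in $n$, the intersection $E_\infty:=\bigcap_n\{\tau^Y_{b_n}<\tau^Y_a\}$ satisfies
\begin{equation*}
\PP(E_\infty)=\lim_n\PP(\tau^Y_{b_n}<\tau^Y_a)=\frac{s(x_0)-s(a)}{s(r)-s(a)}>0
\end{equation*}
by the assumption $s(r)<\infty$. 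I would then argue pathwise that $E_\infty\subseteq\{\lim_{t\uparrow\zeta}Y_t=r,\,Y_t>a\,\forall t<\zeta\}$ up to null sets. On a trajectory $\omega\in E_\infty$ with $\tau^Y_a(\omega)<\infty$, continuity of $Y$ on $[0,\tau^Y_a]$ combined with $b_n\to r$ would force $Y$ to attain the value $r$ at some time in $[0,\tau^Y_a]$; but $Y$ reaches $r$ only upon exit at $r$ (where it is then absorbed and cannot return to $a$), contradicting $\tau^Y_a<\infty$. Hence $\tau^Y_a=\infty$, i.e.\ $Y_t>a$ for all $t<\zeta$, on $E_\infty$. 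Combining Proposition~\ref{prop:set2} with the remark after Proposition~\ref{prop:set3} (which gives $\PP(A)=0$ under $s(r)<\infty$) and noting that $Y>a>l$ excludes the events $B_l$ and $C_l$, the trajectories in $E_\infty$ lie in $B_r\cup C_r$, yielding $\lim_{t\uparrow\zeta}Y_t=r$.

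For the first assertion, the second step already gives $\PP(B_r\cup C_r)>0$, so it remains to show that $\PP(B_r)$ and $\PP(C_r)$ cannot both be positive. This is Feller's test at the endpoint~$r$, which I would derive through the Dambis-Dubins-Schwarz reduction employed throughout Section~\ref{sec:Additive_F}. Writing $s(Y_t)=B_{\la s(Y),s(Y)\ra_t}$ for a Brownian motion $B$ starting from $s(x_0)$, the event $\{\lim_{t\uparrow\zeta}Y_t=r\}$ coincides with $\{B$ hits $s(r)$ before $s(l)\}$ (using the convention $\tau^B_{s(l)}=\infty$ when $s(l)=-\infty$), on which $\la s(Y),s(Y)\ra_\zeta=\tau^B_{s(r)}$ and the occupation-times formula yields
\begin{equation*}
\zeta=\int_0^{\tau^B_{s(r)}}\frac{\dd u}{(\rho\sigma)^2(s^{-1}(B_u))}=\int L^y_{\tau^B_{s(r)}}(B)\,\frac{\dd y}{(\rho\sigma)^2(s^{-1}(y))}.
\end{equation*}
The upper-level analogue of Lemma~\ref{lem:BM_Loc_Fuct} (obtained by reflecting $B$ at $s(r)$ and applying the first Ray-Knight theorem exactly as in its original proof) gives an a.s.\ dichotomy: the integral is $\PP$-a.s.\ finite or infinite according as $(s(r)-y)(\rho\sigma)^{-2}(s^{-1}(y))$ is integrable near $s(r)$ or not, which after the change of variables $y=s(x)$ is precisely the Feller condition $\frac{s(r)-s}{\rho\sigma^2}\in L^1_\loc(r-)$. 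In the former regime $\PP(C_r)>0$ and $\PP(B_r)=0$; in the latter $\PP(C_r)=0$ and $\PP(B_r)>0$ follows from the second assertion.

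The main obstacle is the a.s.\ character of the Feller dichotomy above: on the event $\{\lim_{t\uparrow\zeta}Y_t=r\}$, \emph{every} trajectory either exits in finite time or none does, decided by the deterministic integrability condition at $r$. Establishing this requires the upper-level version of Lemma~\ref{lem:BM_Loc_Fuct}, whose proof parallels the one in Section~\ref{sec:Additive_F} via the first Ray-Knight theorem, combined with the change-of-variables step $y=s(x)$ that matches the Brownian-side integrability near $s(r)$ to the Feller condition in the original diffusion coordinates.
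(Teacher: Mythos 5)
The paper itself does not prove Proposition~\ref{prop:set4}: Appendix~\ref{app:Friff_Prop} explicitly presents it as a well-known fact, citing the construction of solutions in Engelbert--Schmidt, Karatzas--Shreve and Cherny--Engelbert. So there is no in-paper argument to compare with; your proposal reconstructs the standard proof (gambler's ruin for $s(Y)$ via optional stopping, then Feller's test via time change and the Ray--Knight computation), and in outline it is the right route.

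There is, however, one step whose justification fails under the paper's standing assumptions. You claim that $(\rho\sigma)^2$ is bounded away from zero on the compact set $[a,b_n]\subset J$ in order to conclude $\tau^Y_{a,b_n}<\infty$ a.s. The Engelbert--Schmidt conditions \eqref{eq:set2}--\eqref{eq:set3} only require $\sigma$ to be a Borel function with $\sigma(x)\ne0$ for all $x$ and $1/\sigma^2\in L^1_\loc(J)$; such a $\sigma$ need not be locally bounded away from zero (it may oscillate down to $0$ along a sequence), so $\la s(Y),s(Y)\ra$ need not grow linearly. The correct way to get $\tau^Y_{a,b_n}<\infty$ a.s. is through the time-change representation: by the occupation times formula, the elapsed time before exiting $[a,b_n]$ equals $\int L^y(B)\,(\rho\sigma)^{-2}(s^{-1}(y))\,\dd y$ over a compact range, which is finite precisely because $1/\sigma^2\in L^1_\loc(J)$ — this is where \eqref{eq:set3} is genuinely used, and it is part of the cited construction. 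A smaller point of the same flavour arises in your Feller-test half: the measure $\dd y/(\rho\sigma)^2(s^{-1}(y))$ is only defined and locally finite on $s(J)$, whereas Lemma~\ref{lem:BM_Loc_Fuct} (reflected) is stated for a Brownian motion on a half-line; one needs to localise (e.g.\ restart at $\tau^Y_b$ for some $b\in(x_0,r)$, using the second assertion and the strong Markov property) so that only the behaviour of the measure near $s(r)$ matters and the a.s.\ dichotomy transfers to the event $\{\lim_{t\uparrow\zeta}Y_t=r\}$. With these repairs the argument is sound; note also that the statement allows $a\le l$, in which case it degenerates to $\PP(B_r\cup C_r)>0$, i.e.\ Proposition~\ref{prop:set3}(i).
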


\begin{proposition}[Feller's test for explosions]
\label{prop:set5}
We have $\PP(B_r)=0$, $\PP(C_r)>0$ if and only if
\begin{equation*}
s(r)<\infty\quad\text{and}\quad\frac{s(r)-s}{\rho\sigma^2}\in L^1_\loc(r-).
\end{equation*}
\end{proposition}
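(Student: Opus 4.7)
The plan is to reduce the problem to Brownian motion via the scale transformation and a Dambis-Dubins-Schwarz time change, and then to invoke Lemma~\ref{lem:BM_Loc_Fuct}. Necessity of $s(r)<\infty$ is immediate from Proposition~\ref{prop:set3}(i): if $s(r)=\infty$ then $\PP(C_r)=0$. Hence I assume $s(r)<\infty$ throughout and, using the dichotomy in Proposition~\ref{prop:set4}, aim to prove that $\PP(C_r)>0$ is equivalent to $(s(r)-s)/(\rho\sigma^2)\in L^1_\loc(r-)$.

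Set $Z_t:=s(Y_t)$ for $t\in[0,\zeta)$. It\^o's formula gives $\dd Z_t=(\rho\sigma)(Y_t)\,\dd W_t$, so $Z$ is a continuous local martingale on the stochastic interval $[0,\zeta)$ with state space $(s(l),s(r))$ and $\la Z,Z\ra_t=\int_0^t(\rho\sigma)^2(Y_u)\,\dd u$. Localising by the stopping times $\tau^Y_{\alpha_n,r_n}$ with $\alpha_n\downarrow l$, $r_n\uparrow r$, and applying Dambis-Dubins-Schwarz (possibly after enlarging the probability space), I obtain a Brownian motion $\beta$ starting from $0$ such that $Z_t=s(x_0)+\beta_{T_t}$ with $T_t:=\la Z,Z\ra_t$. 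With $\tau:=\inf\{u\ge0:s(x_0)+\beta_u\notin(s(l),s(r))\}$, one checks $T_\zeta=\tau$ $\PP$-a.s.\ (using that if $Y$ stayed in a compact subinterval of $J$ then $(\rho\sigma)^2(Y)$ would be bounded below, forcing $T_t\to\infty$). Hence $\zeta<\infty$ iff $T^{-1}(\tau)<\infty$. Inverting the time change and applying the Brownian occupation times formula on $[0,\tau]$, followed by the substitution $z=s(y)$, gives
\begin{equation*}
\zeta=\int_0^\tau\frac{\dd v}{(\rho\sigma)^2\circ s^{-1}(s(x_0)+\beta_v)}=\int_J L^{s(y)}_\tau(s(x_0)+\beta)\,\frac{\dd y}{\rho(y)\sigma^2(y)}.
\end{equation*}

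On the event $E_r:=\{\lim_{t\uparrow\zeta}Y_t=r\}=\{\beta_\tau=s(r)-s(x_0)\}$, the contribution to this integral from any compact subinterval of $J$ is a.s.\ finite, since on $[0,\tau]$ the process $s(x_0)+\beta$ stays in a compact subset of $s(J)$, the local time field $z\mapsto L^z_\tau$ is c\`adl\`ag with compact support, and $1/(\rho\sigma^2)\in L^1_\loc(J)$ by~\eqref{eq:set2} and~\eqref{eq:set3}. Therefore the finiteness of $\zeta$ on $E_r$ is governed only by the behaviour of the integrand near $y=r$. Conditioning on the last exit of $s(x_0)+\beta$ from a fixed level $\xi\in(s(x_0),s(r))$ before $\tau$ (after which $\beta$ runs as a Brownian motion hitting $s(r)-s(x_0)$ without ever returning below $\xi-s(x_0)$), I apply Lemma~\ref{lem:BM_Loc_Fuct}, in its spatially reflected form, to the pushforward of $\dd y/(\rho(y)\sigma^2(y))$ under $s$ restricted to $(\xi,r)$. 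The lemma yields that the near-$r$ part of the integral is a.s.\ finite on $E_r$ iff $s(r)-z$ is locally integrable at $s(r)-$ with respect to this pushforward measure, which, by the same change of variables as in the proof of Theorem~\ref{thm:Loc_time_int_nu_exit_BC}, is equivalent to $(s(r)-s)/(\rho\sigma^2)\in L^1_\loc(r-)$.

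The principal obstacle is the matching between Lemma~\ref{lem:BM_Loc_Fuct}, which concerns a Brownian motion on a half-line absorbed at one endpoint, and the present two-boundary setting, in which $s(x_0)+\beta$ may also be absorbed at $s(l)$ when $s(l)>-\infty$. The last-exit decomposition sketched above is one clean route; an alternative is to stop $\beta$ at the hitting times of an increasing sequence of levels $\xi_n\uparrow s(r)-s(x_0)$, apply Lemma~\ref{lem:BM_Loc_Fuct} to each truncated problem after a spatial reflection, and pass to the limit by monotone convergence. Either route has to be carried out carefully to promote the resulting a.s.\ statement from the conditional event $E_r$ to the full dichotomy, which is then delivered by Proposition~\ref{prop:set4}.
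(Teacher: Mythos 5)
The paper itself does not prove Proposition~\ref{prop:set5}: it is stated as a classical result and attributed to \cite[Sec.~4.1]{ChernyEngelbert:05} (with \cite[Ch.~5, Th.~5.29]{KaratzasShreve:91} as an alternative source). Your proposal is therefore necessarily a different route, and it is an attractive one: you derive Feller's test from the paper's own machinery, namely the scale/DDS reduction plus the Ray--Knight-based Lemma~\ref{lem:BM_Loc_Fuct}, exactly mirroring the mechanism used in the paper's proof of Theorem~\ref{thm:Loc_time_int_nu_exit_BC}. The skeleton is sound and non-circular (Lemma~\ref{lem:BM_Loc_Fuct} and Propositions~\ref{prop:set3}--\ref{prop:set4} do not rely on Feller's test), the identity $\zeta=\int_J L^{s(y)}_{\tau}(s(x_0)+\beta)\,\dd y/(\rho\sigma^2)(y)$ is correct, and the reduction of finiteness of $\zeta$ on $E_r$ to the behaviour of the integrand near $r$ is right (near $l$ the local time vanishes below the running infimum of the time-changed path, which stays strictly above $s(l)$ on $E_r$; your phrase ``stays in a compact subset of $s(J)$'' is slightly off since the path reaches $s(r)\notin s(J)$, but the local-time field is still bounded with support bounded away from $s(l)$).

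The genuine problem is the final step, which you yourself flag as the ``principal obstacle'' and then resolve incorrectly. Your route (a) conditions on the last exit from a level $\xi$ before $\tau$ and asserts that afterwards ``$\beta$ runs as a Brownian motion hitting $s(r)-s(x_0)$ without ever returning below $\xi-s(x_0)$''. This is false: the last-exit time is not a stopping time, and a Brownian motion conditioned to hit $s(r)$ before returning to $\xi$ is a Doob $h$-transform (a shifted three-dimensional Bessel process), not a Brownian motion, so Lemma~\ref{lem:BM_Loc_Fuct} does not apply to it. Your route (b) (stopping at levels $\xi_n\uparrow s(r)$) also does not close as stated, because for each fixed $\xi_n<s(r)$ the truncated integral is trivially finite and the lemma gives no quantitative control that survives the limit.

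The repair is immediate, and it is exactly what the paper does in the proof of Theorem~\ref{thm:Loc_time_int_nu_exit_BC}: no conditioning is needed at all. The DDS construction produces a genuine Brownian motion $B=s(x_0)+\beta$ defined for all time on an enlarged space; by recurrence its first hitting time $\tau^B_{s(r)}$ of the level $s(r)<\infty$ is a.s.\ finite, and the spatially reflected Lemma~\ref{lem:BM_Loc_Fuct} (valid because the first Ray--Knight theorem is reflection-invariant) yields the \emph{unconditional} a.s.\ dichotomy for $\int L^{z}_{\tau^B_{s(r)}}(B)\,\hat\nu(\dd z)$, where $\hat\nu$ is the pushforward of $\dd y/(\rho\sigma^2)(y)$ near $r$. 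On $E_r$ one has $\tau=\tau^B_{s(r)}$, so restricting this a.s.\ statement to the positive-probability event $E_r$ (it has positive probability by Proposition~\ref{prop:set3} since $s(r)<\infty$) gives: $\zeta<\infty$ a.s.\ on $E_r$ iff $(s(r)-s)/(\rho\sigma^2)\in L^1_\loc(r-)$, and $\zeta=\infty$ a.s.\ on $E_r$ otherwise. Combined with the dichotomy of Proposition~\ref{prop:set4} and the necessity of $s(r)<\infty$ from Proposition~\ref{prop:set3}, this completes the proof. With that one-line substitution for your last paragraph, the argument is correct.
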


Clearly, Propositions \ref{prop:set4} and~\ref{prop:set5},
which contain statements about the behaviour of one-dimensional diffusions
at the endpoint~$r$, have their analogues for the behaviour at~$l$.
Feller's test for explosions in this form is taken 
from \cite[Sec.~4.1]{ChernyEngelbert:05}.
For a different (but equivalent) form see e.g. \cite[Ch.~5, Th.~5.29]{KaratzasShreve:91}.

Let us finally emphasize
that the results stated in this appendix
do not in general hold
beyond~\eqref{eq:set2} and~\eqref{eq:set3}.

\bibliographystyle{abbrv}
\bibliography{refs}

\begin{thebibliography}{10}

\bibitem{Assing:94}
S.~Assing.
\newblock {\em Homogene stochastische Differentialgleichungen mit
  gew\"ohnlicher Drift}.
\newblock Promotionsschrift. Friedrich-Schiller-Universit\"at, Jena, 1994.

\bibitem{ChernyEngelbert:05}
A.~Cherny and H.-J. Engelbert.
\newblock {\em Singular {S}tochastic {D}ifferential {E}quations}, volume 1858
  of {\em Lecture Notes in Mathematics}.
\newblock Springer-Verlag, Berlin, 2005.

\bibitem{CinlarJacodProtterSharpe:80}
E.~{\c{C}}inlar, J.~Jacod, P.~Protter, and M.~J. Sharpe.
\newblock Semimartingales and {M}arkov processes.
\newblock {\em Z. Wahrsch. Verw. Gebiete}, 54(2):161--219, 1980.

\bibitem{EngelbertSchmidt:89}
H.-J. Engelbert and W.~Schmidt.
\newblock Strong {M}arkov continuous local martingales and solutions of
  one-dimensional stochastic differential equations. {I}.
\newblock {\em Math. Nachr.}, 143:167--184, 1989.

\bibitem{EngelbertSchmidt:91}
H.-J. Engelbert and W.~Schmidt.
\newblock Strong {M}arkov continuous local martingales and solutions of
  one-dimensional stochastic differential equations. {III}.
\newblock {\em Math. Nachr.}, 151:149--197, 1991.

\bibitem{Jacod:79}
J.~Jacod.
\newblock {\em Calcul stochastique et probl\`emes de martingales}, volume 714
  of {\em Lecture Notes in Mathematics}.
\newblock Springer, Berlin, 1979.

\bibitem{JacodShiryaev:03}
J.~Jacod and A.~N. Shiryaev.
\newblock {\em Limit {T}heorems for {S}tochastic {P}rocesses}, volume 288 of
  {\em Grundlehren der Mathematischen Wissenschaften}.
\newblock Springer-Verlag, Berlin, second edition, 2003.

\bibitem{Jeulin:80}
T.~Jeulin.
\newblock {\em Semi-martingales et grossissement d'une Filtration}, volume 833
  of {\em Lecture Notes in Mathematics}.
\newblock Springer-Verlag, Berlin, 1980.

\bibitem{KaratzasShreve:91}
I.~Karatzas and S.~E. Shreve.
\newblock {\em Brownian {M}otion and {S}tochastic {C}alculus}, volume 113 of
  {\em Graduate Texts in Mathematics}.
\newblock Springer-Verlag, New York, second edition, 1991.

\bibitem{Maisonneuve:77}
B.~Maisonneuve.
\newblock Une mise au point sur les martingales locales continues d\'efinies
  sur un intervalle stochastique.
\newblock In {\em S\'eminaire de {P}robabilit\'es, {XI} ({U}niv. {S}trasbourg,
  {S}trasbourg, 1975/1976)}, pages 435--445. Lecture Notes in Math., Vol. 581.
  Springer, Berlin, 1977.

\bibitem{MijatovicUrusov:12c}
A.~Mijatovi\'{c} and M.~Urusov.
\newblock Convergence of integral functionals of one-dimensional diffusions.
\newblock {\em Electronic Communications in Probability}, 17:1--13, 2012.

\bibitem{MijatovicUrusov:12a}
A.~Mijatovi\'{c} and M.~Urusov.
\newblock On the martingale property of certain local martingales.
\newblock {\em Probability Theory and Related Fields}, 152(1):1--30, 2012.

\bibitem{Protter:05}
P.~E. Protter.
\newblock {\em Stochastic {I}ntegration and {D}ifferential {E}quations},
  volume~21 of {\em Stochastic Modelling and Applied Probability}.
\newblock Springer-Verlag, Berlin, 2005.
\newblock Second edition. Version 2.1, Corrected third printing.

\bibitem{RevuzYor:99}
D.~Revuz and M.~Yor.
\newblock {\em Continuous {M}artingales and {B}rownian {M}otion}, volume 293 of
  {\em Grundlehren der Mathematischen Wissenschaften}.
\newblock Springer-Verlag, Berlin, third edition, 1999.

\bibitem{Sharpe:92}
M.~J. Sharpe.
\newblock Closing values of martingales with finite lifetimes.
\newblock In {\em Seminar on {S}tochastic {P}rocesses, 1991 ({L}os {A}ngeles,
  {CA}, 1991)}, volume~29 of {\em Progr. Probab.}, pages 169--186. Birkh\"auser
  Boston, Boston, MA, 1992.

\bibitem{Sharpe:00}
M.~J. Sharpe.
\newblock Martingales on random sets and the strong martingale property.
\newblock {\em Electron. J. Probab.}, 5:no. 1, 17 pp. (electronic), 2000.

\bibitem{Yan:82}
J.~A. Yan.
\newblock Martingales locales sur un ouvert droit optionnel.
\newblock {\em Stochastics}, 8(3):161--180, 1982/83.

\bibitem{Yor:78}
M.~Yor.
\newblock Un example de processus qui n'est pas semi-martingale.
\newblock In {\em Temps Locaux}, volume 52 and 53 of {\em Ast\'erisque}, pages
  219--222. Soci\'et\'e Math\'ematique de France, Paris, 1978.

\bibitem{Zheng:82}
W.~A. Zheng.
\newblock Semimartingales in predictable random open sets.
\newblock In {\em Seminar on {P}robability, {XVI}}, volume 920 of {\em Lecture
  Notes in Math.}, pages 370--379. Springer, Berlin, 1982.

\end{thebibliography}
\end{document}